\documentclass[12pt]{article}
\pagestyle{plain}
\usepackage{amsmath, amsthm, amsfonts, amssymb, color}
\usepackage[utf8]{inputenc}
\usepackage{enumitem}
\usepackage{hyperref}
\usepackage{leftindex}
\usepackage[backend=biber,style=alphabetic,abbreviate=false,giveninits=true,sorting=nty,maxbibnames=99]{biblatex}

\addbibresource{AFElit.bib}


\setlength{\topmargin}{-1cm} 
\setlength{\oddsidemargin}{0cm}
\setlength{\evensidemargin}{0cm} 
\setlength{\textwidth}{16cm}
\setlength{\textheight}{22cm}

\def\doublespace{\baselineskip=14pt}


\newcommand{\E}{\mathcal E}
\newcommand{\R}{\mathbb R}
\newcommand{\N}{\mathbb N}
\newcommand{\Z}{\mathbb Z}

\newcommand{\dom}{\mathcal D}

\newcommand{\univ}{\mathbf e}
\newcommand{\eins}{\boldsymbol 1}

\newcommand{\im}{\operatorname{Im}}

\newcommand{\de}{\mathop{}\!\textnormal{d}}
\newcommand{\grad}{\mathop{}\!\nabla}

\newcommand{\cov}[1]{\mathop\textnormal{Cov}\!_{#1}}

\newcommand{\bracket}[1]{<\!{#1}\!>}

\newcommand{\supp}[1]{\operatorname{supp}[{#1}]}

\newcommand{\euc}{\textnormal{euc}}


\numberwithin{equation}{section}

\newtheorem{thm}{Theorem}[section]
\newtheorem{prop}[thm]{Proposition}
\newtheorem{corollary}[thm]{Corollary}
\newtheorem{lemma}[thm]{Lemma}

\theoremstyle{definition}
\newtheorem{condition}[thm]{Condition}
\newtheorem{remark}[thm]{Remark}
\newtheorem{exa}[thm]{Example}

\newtheorem{test}{Lemma}


\newenvironment{thm(ii)}[1][]
{\enumerate[leftmargin=21pt, itemsep=2pt, parsep=0pt, label=\rm (\roman*), #1]}
{\endenumerate}

\newenvironment{thm(vi)}[1][]
{\enumerate[leftmargin=25pt, itemsep=2pt, parsep=0pt, label=\rm (\roman*), #1]}
{\endenumerate}


\title{Mosco convergence of gradient forms\\ with non-convex potentials  II}

\author{Martin Grothaus\footnote{University of Kaiserslautern-Landau, \textit{grothaus@mathematik.uni-kl.de}}\, and Simon Wittmann\footnote{The Hong Kong Polytechnic university, \textit{simon.wittmann@polyu.edu.hk}}
	\footnote{Simon Wittmann, who was a scientific assistant in the Department of Mathematics at the Technical University of Kaiserslautern at the time this research was conducted, received funding the by the DFG (GR 1809/14-1).}}


\begin{document}
	
	\maketitle
	\thispagestyle{empty}

	\begin{abstract}
		This article provides a scaling limit for a family of skew interacting Brownian motions in the context of mesoscopic interface models. Let $d\in\N$ and height levels $y_1,\dots,y_M\in\R$ be fixed. For each $N\in\N$ we consider a  $k_N$-dimensional, skew reflecting distorted Brownian motion $(X^{N,i}_t)_{i=1,\dots,k_N}$, $t\geq 0$,
		and investigate the scaling limits for $N\to\infty$.
		Apart from a positive symmetric operator $A_N:\R^{k_N}\to\R^{k_N}$ and a bounded continuous function $f:\R\to\R$, the drift includes skew reflections at height levels 
		$\tilde y_j:=N^{1-\frac{d}{2}}y_j$ with intensities $\beta_j/N^d$ for $j=1,\dots,M$.
		The corresponding SDE is given by
		\begin{multline}\label{eqref:zerozero}
			\de X^{N,i}_t=-\big(A_N X^{N}_t\big)_i\de t-\frac{1}{2}N^{-\tfrac{d}{2}-1}\,f\big(N^{\frac{d}{2}-1}X^{N,i}_t\big)\de t
			\\+\sum_{j=1}^M\tfrac{1-e^{-\beta_j/N^d}}{1+e^{-\beta_j/N^d}}\de  l_t^{N,i,  \tilde y_j}
			+\de B_t^{N,i},
		\end{multline}
		where ${(B_t^{N,i})}_{t\geq 0}$, $i=1,\dots, k_N$, are independent Brownian motions
		and $ l_t^{N,i, \tilde y_j}$ denotes the local time of  ${(X^{N,i}_t)}_{t\geq 0}$ at 
		$\tilde y_j$.
		Due to these, the invariant measure of \eqref{eqref:zerozero} is not log-concave.
		The process $(X^{N}_t)_{t\geq 0}$ with components $(X^{N,i}_t)_{t\geq 0}$
		serves as a microscopic model for a $(d+1)$-dimensional dynamic interface over a bounded open domain $D\subset \R^d$, exposed to external forces.
		We prove the weak convergence of the equilibrium laws of 
		\begin{equation*}
			u_t^N=\Lambda_N\circ X^{N}_{N^2t},\quad t\geq 0,
		\end{equation*}
		for $N\to\infty$, choosing suitable injective, linear maps $\Lambda_N:\R^{k_N}\to \{h\,|\,h:D\to\R\}$, called the height maps.
		The scaling limit is a distorted Ornstein-Uhlenbeck process whose state space is the Hilbert space $H=L^2(D,\de z)$.
		We characterize a class of height maps, which contains the most relevant examples from literature,  such that the scaling limit of the dynamic is not influenced by the particular choice of ${(\Lambda_N)}_{N\in\N}$ within that class.
	\end{abstract}

	\tableofcontents
	
	\doublespace
	\section{Introduction}
	
	The reaction-diffusion stochastic partial differential equation of the form
\begin{align}\label{eqn:inispde}
	\frac{\partial u}{\partial t}=-A u-\frac{1}{2}g'(u)+\dot W,	\quad t\geq 0,
\end{align}
over the spatial domain $D=(0,1)$ with Dirichlet boundary conditions
has been analysed in \cite{bounebache14}.
There, the equation is suggested as a model for the random fluctuations of an effective physical interface at a mesoscopic level.
Above, $\dot W(t,z)$, $t\geq 0$, $z\in D$, denotes space-time white-noise and $-A$ is the second-order differential operator $\frac{1}{2}\frac{\partial^2}{\partial z^2}$.
\eqref{eqn:inispde} is an informal equation, since
$g:\R\to\R$ is merely a function of bounded variation and not necessarily differentiable or continuous. However,
$g'(u)$ can be made sense of as a continuous additive functional of the solution $u$.
 This article covers some relevant topics for scaling limits related to \eqref{eqn:inispde}.
In a more general fashion, we consider a spatial domain $D$ which is a bounded, open domain of $\R^d$ with $d\in\N$ and $A$ denotes any self-adjoint operator on $H:=L^2(D,\de z)$ such that $\frac{1}{2}A^{-1}$ is the covariance operator of a Gaussian measure on $H$. 
For such $g$, $A$ and domain $D$ we consider \eqref{eqn:inispde} as an informal equation modelling a mesoscopic dynamic interface.
As a standard example in this text, we take $D=(0,1)^2$ and set $A=-\Delta+\Delta^2$ with operator domain $H^{4,2}(D)\cap H_0^{3,2}(D)$, resulting in the Gaussian measure for a semiflexible polymer model, such as considered in \cite{cipriani20}. 
For $N\in\N$ we construct a system of $k_N$ interacting skew Brownian motions. Their equilibrium laws converge weakly to the stationary solution of the reaction-diffusion equation \eqref{eqn:inispde} under a scaling limit for $N\to\infty$. 
The identification of a diffusion process on $\R^{k_N}$ with a process on $H$ is done via suitable  height maps $\Lambda_N:\R^{k_N}\to H$, $N\in\N$. Different choices for the sequence ${(\Lambda_N)}_{N\in\N}$, such as piecewise constant interpolation or piecewise linear, continuous interpolation might serve for that purpose. We show the invariance of the scaling limit for a general class of height maps which includes the two mentioned types of interpolations.
 
The approximation of an infinite dimensional random variable or an infinite dimensional diffusion process by the corresponding discretized objects is an essential aspect of an interface model in statistical mechanics.
It captures the transition from a microscopic perspective to a macro- or mesoscopic point of view.
In this article,  a discretization of $D$ is considered. We denote by $\overline D$ the topological closure of $D$.
For each $N\in\N$ a finite set of grid points $G_N$ is selected 
as a subset of the lattice with $G_N\subseteq N\overline D\cap\Z^d$
and a bijection $I_N:$ $\{1,\dots, k_N\}$ $\to$ $G_N$, $k_N=|G_N|$, is fixed.
By interpolating between the nodes $\frac{1}{N}G_N$ $:=\{z\in \overline D$ $|$ $Nz\in G_N\}$ we can
identify a finite family of real random variables $X^{N,i}$,  or a family of real-valued diffusion processes
${(X_t^{N,i})}_{t\geq 0}$,
over the index set $i\in\{1,\dots k_N\}$ with an $H$-valued random variable,  or a diffusion on $H$.
Like this, a stochastic model for the phenomenons of a $(d+1)$-dimensional physical interface can be defined. The interface's height 
at a node $z$ such that $Nz=I_N(i)$ for some $i\in\{1,\dots, k_N\}$ is given by $X^{N,i}$, respectively 
${(X_t^{N,i})}_{t\geq 0}$.
For each $N\in\N$ the interface at this scale is thoroughly determined by its height values on the set of nodes $z\in\frac{1}{N}G_N$. 

In this frame, the main interests of this survey are:
\begin{itemize}
	
	\item Can we approximate a diffusion, informally given by \eqref{eqn:inispde}, with a sequence of process 
	$\{X_t^{N,i}\in\R$ $|$ $i=1,\dots,k_N$, $t\geq 0$ $\}$, which we identify with a sequence of diffusion processes on $H$ via the height maps ${(\Lambda_N})_N$, if we let $N\to\infty$?
	
	\item Does a change of the interpolation method affect the asymptotic behaviour for $N\to\infty$ of this stochastic interface model?
\end{itemize}
The relevant asymptotic statement, to which both questions refer, is the weak measure convergence of equilibrium fluctuations. 
In the article, for technical reasons, the second problem has to be covered first.
In the first part of this survey, we address the imminent question, to what extent the height map influences the scaling limit. We give an answer under the assumption that 
\begin{equation}
	\Lambda_Nx(z):=N^{\frac{d}{2}-1}\sum_{i=1}^{k_N}x_i\,\Xi(Nz- I_N(i)),\quad  x\in\R^{k_N},\,z\in D,
\end{equation}
where $\Xi:\R^d\to[0,1]$ is independent of $N$ and $I_N:\{1,\dots,k_N\}\to N\overline D\cap\Z^d$ is a bijection identifying the index of a microscopic component with a lattice point within $N\overline D$. As it turns out, 
the choice of the function $\Xi$ is irrelevant to the scaling limit, as long as it meets some basic and plausible assumptions. This result is obtained for a static random interface at first and then generalized to the dynamic case.
The second part of the article is dedicated to characterize the asymptotic distribution of $(u_t^N)_{t\geq 0}$ for $N\to\infty$. The main result proves the weak convergence of equilibrium laws.
It is based on the recent progress in \cite{me21} on the topic of Mosco-Kuwae-Shioya convergence of gradient forms with non-convex potentials.


The two most common interpolation methods to encounter in this context are either a piecewise constant interpolation 
from  $\R^{k_N}$ into $H$ or a piecewise linear interpolation from $\R^{k_N}$ into the continuous functions $C(\overline D)\subset H$. Usually in concrete examples (e.g.~in \cite{Zambotti04}, \cite{zambotti05}, \cite{Dembo2005}, \cite{bounebache14}, \cite{cipriani20}), one of those two methods is fixed. However, it would be desirable to be able to transfer a convergence result, which is proven for one method, to the other without doing extra calculations.
In this article, we give the abstract arguments why this is possible.
The interpolation methods taken into account are of the following general type.
Let $N\in\N$. Considering a static model at first, the starting point is a random vector with values in $\R^{k_N}$. So, we want to define the height map $\Lambda_N$ as a continuous linear operator from $\R^{k_N}$ into $H$. 
To this end, each index $i=1,\dots, k_N$ is spatially associated with the node $\frac{1}{N}I_N(i)\in\overline D$.
We want to assume spatial homogeneity, 
i.e.~the image set of the unit vectors $\{\univ_i$  $|$ $i=1,\dots k_N\}$ under $\Lambda_N$ is a set of shifted versions of one and the same archetype function $\Xi:\R^d\to[0,1]$, rescaled by a factor $c_N$.
This motivates the definition
\begin{equation}\label{eqn:heightDef0}
	\Lambda_Nx(z):=c_N\sum_{i=1}^{k_N}x_i\,\Xi(Nz- I_N(i)),\quad z\in D,\,x\in\R^{k_N}.
\end{equation}
The factor $c_N$ rescales the interface's amplitude, or height.
In this text, the choice is always $c_N:=N^{\frac{d}{2}-1}$.
This is merely a convention. However, the choice seems natural. 
As an example, we consider
the one-dimensional $\nabla \varphi$ interface model with $D=(0,1)$ and $G_N=\{1,\dots,N\}$.
If we set $X^{N,0}:=0$
and define a family of real random variables $\{X^{N,i}\}$ over the index $i\in G_N$ such that the law of 
\begin{equation*}
	{\big(X^{N,i}-X^{N,i-1}\big)}_{i=1,\dots, N+1}\in\R^{N+1}
\end{equation*}
is the $(N+1)$-dimensional standard normal distribution, then the scaling limit under $\Lambda_N$ is the standard Brownian motion on $[0,1]$. More precisely, let 
\begin{equation*}
Y^N:=X^{N,1}\univ_1+\dots+X^{N,N}\univ_N,
\end{equation*}
and 
\begin{align*}
	\Lambda_Nx(z)&:=\frac{1}{\sqrt{N}}\sum_{i=1}^{N}x_i\,\eins_{[-1,0)}(Nz- i)\\
	&=\frac{1}{\sqrt{N}}\sum_{i=1}^{N}x_i\,\eins_{[\frac{i-1}{N},\frac{i}{N})}(z),\quad z\in (0,1),\,x\in\R^{N}.
\end{align*}
	Then, the law of ${(\Lambda_N\circ Y^N)}_{N\in\N}$ converges towards the law of a Brownian motion on $[0,1]$, in the sense of weak measure convergence on $H=L^2((0,1),\de z)$. 
 The particular choice of $\Xi$ in \eqref{eqn:heightDef0} is irrelevant to the question of existence of a scaling limit in the sense of weak measure convergence on $H$.
This invariance principle is proven in Theorem \ref{thm:HeightMaps} for the general setting in which $D$ is a bounded open domain of $\R^d$ and $\Xi:\R^d\to[0,1]$ meets the properties specified in Condition \ref{condi:Xi}.
 A similar statement holds for the dynamic case and is stated in Theorem \ref{thm:Q1dyn}.

To motivate the first question posed above, we recall the stochastic partial differential equation, discussed in \cite{bounebache14}. 
Let $g:\R\to\R$ be a function of bounded variation and
$\mu^{\textnormal{BB}}$ denote the law of a Brownian bridge between $0$ and $0$ on the interval $[0,1]$.
In \cite{bounebache14}, the reader finds a definition of the weak solution for
\begin{align}\label{eqn:bouneHeat}
	&\frac{\partial u}{\partial t}=\frac{1}{2}\frac{\partial^2 u}{\partial z^2}-\frac{1}{2}\int_\R \frac{\partial }{\partial z}l_{t,z}^a \,g(\de a)+\dot W,\nonumber\\
	&u(t,0)=u(t,1)=0,\nonumber\\
	&u(0,z)=u_0(z),\quad z\in[0,1],
\end{align}
where ${(l_{t,z}^a)}_{t\geq 0,z\in[0,1]}$ is the family of local times at $a\in\R$ accumulated over $[0,z]$ by the process ${(u(t,r))}_{t\geq 0,r\in[0,1]}$ and $\dot W(t,z)$, $t\geq 0$, $z\in [0,1]$, denotes space-time white-noise.
Then, the authors construct a weak solution via Dirichlet form techniques. 
The invariant measure $m$ of \eqref{eqn:bouneHeat} is the probability on $H$ whose Radon-Nikodym derivative w.r.t.~$\mu^{\textnormal{BB}}$ is proportional to the density $\exp(-\int_{(0,1)}g\circ h\de z)$, $h\in H$.
The Dirichlet form of the Markov process from \cite{bounebache14} solving \eqref{eqn:bouneHeat} is 
given by
\begin{equation*}
	\E(u,v)=\frac{1}{2}\int_H\langle\grad u,\grad v\rangle_H\de m,\quad u,v\in\dom(\E),
\end{equation*}
on $L^2(H,m)$. Here, $\grad$ denotes the weak gradient in the Gaussian Sobolev space \sloppy $W^{1,2}(H,\mu^{\textnormal{BB}})$, which coincides with the domain $\dom(\E)$,
as the density $\de m/\de \mu^{\textnormal{BB}}$ is bounded from below and above by positive constants.
The stationary law of the diffusion process associated to $\E$ can be identified with 
the scaling limit of a system of interacting skew Brownian motions. As usual, we use the index $N$ to indicate the level of scaling.
For each $N\in\N$ we define a probability measure on $\R^N$ by
\begin{gather}\label{eqn:heree}
	\de m_{A_N}(x):=\frac{1}{Z_N}\exp\Big(-x^\textnormal{T}A_Nx-\frac{1}{N}\sum_{i=1}^N g\big(N^{-\frac{1}{2}}x_i\big)\Big)\de x,\\
	Z_N:=\int_{\R^N}\exp\Big(-x^\textnormal{T}A_Nx-\frac{1}{N}\sum_{i=1}^N g\big(N^{-\frac{1}{2}}x_i\big)\Big)\de x,\nonumber
\end{gather}
which has the symmetric, positive operator $A_N:\R^N\to\R^N$ as a parameter.
The Gaussian measure on $\R^N$ with density proportional to $\exp(-x^\textnormal{T}A_Nx)$ is denoted by $\mu_{A_N}$.
The stationary law of the Markov process on $\R^N$ associated to the Dirichlet form
\begin{equation}\label{eqn:aprE}
	\E^N(u,v)=\frac{1}{2}\sum_{i=1}^N\int_{\R^N}\frac{\partial u}{\partial x_i}\frac{\partial v}{\partial x_i}\de m_{A_N}(x),
	\quad u,v\in\dom(\E^N),
\end{equation}
on $L^2(\R^N,m_{A_N})$ is the natural candidate to yield a finite-dimensional approximation for the solution of \eqref{eqn:bouneHeat}.
The Domain $\dom(\E^N)$ of \eqref{eqn:aprE} coincides with the Gaussian Sobolev space $W^{1,2}(\R^N,\mu_{A_N})$.
We denote the components of that process by ${(X^{N,i}_t)}_{t\geq 0}$, $i=1,\dots, N$.
Given a decomposition
\begin{equation*}
	g(y)=g_0(y)+\sum_{j=1}^M\beta_j\eins_{(-\infty,y_j]}(y),\quad y\in\R,
\end{equation*}
for some $M\in\N$, $\beta_j,y_j\in \R$ for $j=1,\dots,M$ and $g_0\in C_b^1(\R)$, 
the processes ${(X^{N,\,\cdot\,}_t)}_{t\geq 0}$ satisfy  the system of stochastic differential equations

\begin{multline}\label{eqn:inifini}
	\de X^{N,i}_t=-\big(A_N X^{N,\,\cdot\,}_t\big)_i\de t-\frac{1}{2}N^{-\frac{3}{2}}\,g_0'\big(X^{N,i}_t/\sqrt{N}\big)\de t
	\\+\sum_{j=1}^M\frac{1-e^{-\beta_j/N}}{1+e^{-\beta_j/N}}\de l_t^{N,i,\tilde y_j}
	+\de B_t^{N,i},\quad t\geq 0,\,i=1,\dots, N,
\end{multline}
in the weak sense, where ${(B_t^{N,i})}_{t\geq 0}$, denote independent Brownian motions
and $l_t^{N,i,\tilde y_j}$ is the local time of ${(X^{N,i}_t)}_{t\geq 0}$ at $\tilde y_j:=\sqrt Ny_j$.

A result related to the mesoscopic scaling limit of \eqref{eqn:inifini} is provided in \cite{bounebache14}.
It refers to the subsequence $N=2^l$, $l\in\N$, and a particular choice for  $A_N$.
Let 
\begin{equation*}
	T_N:H\ni h\mapsto N\sum_{i=1}^N\Big\langle\eins_{\big[\tfrac{i-1}{N},\tfrac{i}{N}\big)},h\Big\rangle_H\eins_{\big[\tfrac{i-1}{N},
		\tfrac{i}{N}\big)}
\end{equation*}
be the orthogonal projection and 
\begin{equation*}
	\Lambda_N:\R^{N}\ni x\mapsto N^{-\frac{1}{2}}\sum_{i=1}^{N}x_i\eins_{\big[\tfrac{i-1}{N},
		\tfrac{i}{N}\big)},
\end{equation*}
the piecewise constant interpolating height map.
If $A_N:\R^{N}\to\R^{N}$ in \eqref{eqn:heree} is the positive, symmetric operator such that
\begin{equation}\label{eqn:proA}
	\mu^{\textnormal{BB}}\circ \big(\Lambda_N^{-1}\circ T_N\big)^{-1}\propto \exp(-x^\textnormal{T}A_Nx)\de x,
\end{equation}
then the laws of 
\begin{equation*}
u^N_t:=\Lambda_N X^{N,\,\cdot\,}_{N^2t}=
N^{-\frac{1}{2}}\sum_{i=1}^{N}X^{N,i}_{N^2t}\eins_{\big[\tfrac{i-1}{N},
	\tfrac{i}{N}\big)},\quad t\geq 0,
\end{equation*}
 converges  weakly for $N=2^l\overset{l}{\to}\infty$, as a sequence of measures  on $C([0,\infty),H^{-1,2}(0,1))$.
Its limit is a diffusion process with state space $C([0,1])$ and a stationary solution of \eqref{eqn:bouneHeat}.
This statement corresponds to \cite[Lem.~5.8]{bounebache14}.
The authors argue via Mosco convergence of Dirichlet forms (\cite[Thm.~5.6]{bounebache14})
in the framework of Kuwae-Shioya (see \cite{mosco94}, \cite{kuwae03}).

The portrayed example of a scaling limit is significantly extended in this article.
The spatial dimension of the mesoscopic interface model is now given by $d\in\N$, replacing 
$(0,1)$ with a general open, bounded domain $D\subset \R^d$. 
We derive the scaling limit for $N\to\infty$ of a $k_N$-sized family of interacting skew Brownian motions 
${(X^{N,i}_t)}_{t\geq 0}$, $i=1,\dots, N$.
Analogous as in the example above, we consider the stationary process ${(X^{N,\,\cdot\,}_t)}_{t\geq 0}$
associated to the gradient-type Dirichlet form on $L^2(\R^{k_N},m_N)$, where now 
\begin{align}\label{eqn:typeM}
	\de m_{A_N}(x):=\frac{1}{Z_N}\exp\Big(-x^\textnormal{T}A_Nx-\frac{1}{N^d}\sum_{i=1}^{k_N} g\big(N^{\frac{d}{2}-1}x_i\big)\Big)\de x,\nonumber\\
	Z_N:=\int_{\R^{k_N}}\exp\Big(-x^\textnormal{T}A_Nx-\frac{1}{N^d}\sum_{i=1}^{k_N} g\big(N^{\frac{d}{2}-1}x_i\big)\Big)\de x,
\end{align}
with a symmetric positive operator $A_N:\R^{k_N}\to\R^{k_N}$ and a function of bounded variation $g:\R\to\R$.
Each index $i\in\{1,\dots, k_N\}$ corresponds to a grid point $p_i\in  N\overline D\cap\Z^d$.
For a height map of the type \eqref{eqn:heightDef0}, the weak measure convergence of the law of the $H$-valued process
\begin{align*}
	u^N_t&:=\Lambda_N X^{N,\,\cdot\,}_{N^2t}\\&=
	N^{\frac{d}{2}-1}\sum_{i=1}^{k_N}X^{N,i}_{N^2t}\,\Xi(N\,\cdot\,- p_i),\quad t\geq 0,
\end{align*}
for $N\to\infty$ is shown. This holds indifferently from the choice of $\Xi$.
Moreover, in our model, the sequence ${(A_N)}_{N\in\N}$ is not pre-defined through the limiting Gaussian, as it is in \eqref{eqn:proA}. The only assumption is the existence of a Gaussian measure $\mu$ such that $\mu_{A_N}\circ\Lambda_N^{-1}\Rightarrow\mu$ in the sense of weak measure convergence on $H$, 
where as above $\mu_N$ denotes the Gaussian on $\R^N$ such that $\mu_N\propto\exp(-x^\textnormal{T}A_Nx)\de x$. 
A suitable state space for the convergence of the equilibrium laws of ${(u^N_t)}_{t\geq 0}$ for $N\to\infty$ is the dual $H_0'$ of a densely included separable Hilbert space $H_0$ in the Gelfand triple $H_0\subset H\subset H_0'$, assuming the inclusion is a Hilbert-Schmidt operator.
For the limiting process an infinite-dimensional version of the Fukushima decomposition is available due to \cite{Ro2zhu}.

	The outline of this article is as follows.
Section \ref{sec:hvar} deals with a static interface model over a bounded, open domain $D\subset \R^d$, $d\in\N$.
 We fix Borel probability measures, $m_N$ on $\R^{k_N}$ for $N\in\N$, and $m$ on $H=L^2(D,\de z)$. 
The measure $m_N$ represents the law of a $|k_N|$-sized family of real random variables.
The asymptotic element $m$ represents the law of a static random interface over $D$ in the $(d+1)$-dimensional space, which takes values in $H$.
Theorem \ref{thm:HeightMaps} provides a perturbation result for the height map \eqref{eqn:heightDef0}
regarding the asymptotic behaviour of the sequence ${(m_N)}_{N\in\N}$.
The validity of $m_N\circ\Lambda_N^{-1}\Rightarrow m$ (in the sense of weak measure convergence on $H$)
is not affected by a swap of $\Xi$ in \eqref{eqn:heightDef0},
as long as the function $\Xi$ meets certain criteria, specified in Condition \ref{condi:Xi}.
Lemma \ref{lem:ellipt}, Proposition \ref{prop:oder} and Theorem \ref{thm:HeightMaps} are taken from 
\cite{Wit23}. Their proofs have been slightly revised for better readability.
The examples in Section \ref{sec:hvar} 
treat the wetting model of \cite{zambotti05} and the polymer model in \cite{cipriani20}.
In each of those two cases the asymptotic equivalence 
of piecewise linear, continuously interpolating height maps and piecewise constant interpolating height maps is established.

Section \ref{sec:tightness} starts in the setting of the preceding section. A dynamic analogue of Theorem \ref{thm:HeightMaps} is derived, together with a tightness result in the context of Dirichlet forms. Then, the Fukushima decomposition for the diffusion processes which are relevant to the topic of microscopic interface models is stated. For the mesoscopic scaling limit of such processes an infinite-dimensional version of the Fukushima decomposition by Röckner-Zhu-Zhu is provided. At first, we assume that a symmetric, regular, local and conservative Dirichlet form $\E^N$ on $L^2(\R^{k_N},m_N)$ with Carré-du-Champs operator $\Gamma_N$ is given. For the tightness result, we require  each linear functional $l:\R^{k_N}\to\R$ to be a member of $\dom(\E)$ and
uniform bounds for $\Gamma_N(l,l)$ in terms of the Lipschitz constant of $l$ (see \eqref{eqn:Gamma}), independent from $N$. We consider a separable Hilbert space $H_0$, densely included in $H$, such that the inclusion $H_0\hookrightarrow H$ has a finite Hilbert Schmidt norm. Via the Gelfand triple
\begin{equation*}
	H_0\subset H=H'\subset H_0'
\end{equation*}
 we obtain a suitable state space $H_0'$ for the tightness result. We look at the image measure $\mathbb P_N$ of the equilibrium law of the diffusion process ${(X^N_t)}_{t\geq 0}$ associated to $\E^N$ under the time-scaled, $H$-valued process
 	\begin{equation*}
 	u^N_t:=\Lambda_N \circ X^{N}_{N^2t},\quad t\geq 0.
 \end{equation*}
 Proposition \ref{prp:tightness} states the tightness of ${(\mathbb P_N)}_{N\in\N}$ as a family of probabilities on $C([0,\infty),H_0')$, provided the tightness of the invariant measures.
  We consider another height map $\widetilde \Lambda_N$, which is equivalent to $\Lambda_N$ in the sense that it is also built on Condition \ref{condi:Xi}, 
 and define $\widetilde{\mathbb P_N}$ as the equilibrium law under 
  	\begin{equation*}
 	u^N_t:=\widetilde \Lambda_N \circ X^{N}_{N^2t},\quad t\geq 0.
 \end{equation*}
 Then,  the weak measure convergence of  ${(\widetilde{\mathbb P_N})}_{N\in\N}$ 
 and that of ${({\mathbb P_N})}_{N\in\N}$ on $C([0,\infty),H_0')$ are two equivalent statements.
This is shown in Corollary \ref{thm:Q1dyn}, a dynamic version of Theorem \ref{thm:HeightMaps}.
 The Fukushima decomposition for ${(X^N_t)}_{t\geq 0}$ is stated for the case, where $m_N=m_{A_N}$ is of the type \eqref{eqn:typeM} and $\E^N$ is the standard gradient form with that reference measure.
 The process is a system of skew interacting Brownian motions. The infinite-dimensional analogue is a Röckner-Zhu-Zhu decomposition for the diffusion associated to the  gradient from on $H$ with a perturbed Gaussian as a reference measure. The perturbing density is
proportional to $\exp(-\int_{D}g\circ h\de z)$, $h\in H$. It is the potential of the non-linear drift term in the reaction-diffusion equation \eqref{eqn:inispde}. Section \ref{sec:tightness} closes with an observation preparing the subsequent and final Section \ref{sec:Mokoko}.
The weak convergence of the equilibrium laws ${(\mathbb P_N)}_{N\in\N}$ is equivalent to the Mosco-Kuwae-Shioya convergence of the image forms of $N^2\E^N$ under the map $\Lambda_N$.
By virtue of the above mentioned Corollary \ref{thm:Q1dyn}, we may w.l.o.g.~assume $\Xi=\eins_{[-1,0)^d}$. In this case, the image form of $N^2\E^N$ under $\Lambda_N$ is again a standard gradient form. Its state space is $\im (\Lambda_N)\subset H$ and the reference measure is $m_N\circ\Lambda_N^{-1}$.
 
Section \ref{sec:Mokoko} deals with Mosco-Kuwae-Shioya convergence of gradient forms on $H$. To provide the desired result for the perturbed Gaussian models considered above, we apply the  more abstract criterion for Mosco convergence, developed in \cite{me21}. The main results are stated in Theorem \ref{thm:maiMO} and Corollary \ref{cor:mai}.
	
	The most important results of this article are:
\begin{itemize}
	\item Let $m_N$ on $\R^{k_N}$, $N\in\N$, and $m$ on $H=L^2(D,\de z)$ be Borel probability measures.
	We give a family $\mathcal A$ of functions $\R^d\to [0,1]$ from which to choose $\Xi$ 
	in \eqref{eqn:heightDef0}. Then, either the asymptotic statement of
	\begin{equation}\label{eqn:wmcHeightE}
		\lim_{N\to\infty}\int_{\R^{k_N}}f(\Lambda_Nx)m_N(\de x)=\int_H f \de m\quad\text{for all }f\in C_b(H)
	\end{equation}
	 holds true for every choice $\Xi\in\mathcal A$, or there is no  $\Xi\in\mathcal A$ at all to choose in \eqref{eqn:heightDef0} in order to obtain \eqref{eqn:wmcHeightE}. 
	 The family $\mathcal A$ is general enough to accommodate the most relevant choices for $\Xi$.
	 So, as a consequence of this result (see Theorem \ref{thm:HeightMaps}), it is irrelevant to the validity of \eqref{eqn:wmcHeightE}, whether one chooses piecewise linear, continuously interpolating height maps, or piecewise constant interpolating height maps (see Examples \ref{exa:arche}, \ref{exa:wettingM}, \ref{exa:sfPoly} and \ref{exa:ZambCip}).
	The family $\mathcal A$ comprises all functions specified in Condition \ref{condi:Xi}.
	
	\item Let $\mu$ be a non-degenerate centred Gaussian measure on $H$ and $\mu_N$ be a non-degenerate centred Gaussian measure on $\R^{k_N}$ such that $\mu_N\circ\Lambda_N^{-1}$ converges to $\mu$ for $N\to\infty$, in the sense of weak measure convergence on $H$. Further, given functions of bounded variation $g,g_N:\R\to\R$, $N\in\N$, we set
	$$\rho:H\ni h\mapsto\frac{1}{Z}\exp\Big(\int_D g\circ  h\de z\Big),\quad 
	Z:=\int_H\exp\Big(\int_D g\circ  h\de z\Big)\de\mu(h)$$
	and
	\begin{align*}
	\rho_N:\R^{k_N}\ni x\mapsto\frac{1}{Z_N}\exp\Big(\frac{1}{N^d}\sum_{i=1}^{k_N} g_N\big(N^{\frac{d}{2}-1}x_i\big)\Big),\\
	Z_N:=\int_{\R^{k_N}}\exp\Big(\frac{1}{N^d}\sum_{i=1}^{k_N} g_N\big(N^{\frac{d}{2}-1}x_i\big)\Big)\de \mu_N(x).
	\end{align*}
    We assume that ${(g_N)}_{N\in\N}$ is an approximation for $g$ in a generalized sense of local uniform convergence (as defined in Section \ref{sec:Mokoko}).
	Now, we consider the distorted Ornstein-Uhlenbeck process ${(X_t)}_{t\geq 0}$ on $H$ whose Dirichlet form is the standard gradient form with reference measure $\rho\mu$ (as defined  in Remark \ref{rem:remmi}) on the one hand. On the other hand, we look at the
	skew reflecting distorted Brownian motion ${(X_t^N)}_{t\geq 0}$ on $\R^{k_N}$ whose Dirichlet form is the Euclidean standard gradient form with reference measure $\rho_N\mu_N$ (as defined in \ref{exa:moveto}). 
	Then, the equilibrium laws of the latter process under the scaling
	\begin{equation*}
		\Lambda_N\circ X^N_{N^2t},\quad t\geq 0,
	\end{equation*}
	weakly converge towards the equilibrium law of ${(X_t)}_{t\geq 0}$ as $N\to\infty$ (see Corollary \ref{cor:mai}).
\end{itemize}
	
	\section{The static model and the height map $\mathbf\Lambda_N$}\label{sec:hvar}
	Let $d\in\N$ and $D$ be a bounded, open domain of $\R^d$ with topological closure $\overline D$. $\mathcal B_b(D)$ denotes the bounded, measurable functions from $D$ to $\R$. We assume that for $N\in\N$ we are given a set of  
grid points $G_N\subset N\overline D\cap\Z^d$ and an injective linear map $\Lambda_N$ from $\R^{|G_N|}$ to $\mathcal B_b(D)$.
Intuitively, $\Lambda_N$, which is called the height map,
associates a microscopic state $x\in\R^{|G_N|}$ with a physical interface in the $(d+1)$-dimensional space, the graph of $\Lambda_Nx$. 
It is defined over the domain $D$. In this text,  a microscopic state is usually denoted with the variable $x$, while the variable $z$ is usually used to represent a point in the domain of the interface, i.e.~a point in $D$. We denote the Lebesgue measures in the corresponding dimension by $\de x$, respectively $\de z$.
Moreover, we set $H:=L^2(D,\de z)$ with its inner product $\langle\cdot,\cdot\rangle$. $C_b(H)$ denotes the space of bounded, continuous functions from $H$ to $\R$.

 In this section, we fix Borel probability measures, $m_N$ on $\R^{|G_N|}$ for  $N\in\N$, and $m$ on $H$. Then we
 discuss an asymptotic stability property for the sequence ${(m_N)}_{N\in\N}$.
 We ask whether the weak measure convergence,
\begin{equation}\label{eqn:wmcHeight}
	\lim_{N\to\infty}\int_{\R^{|G_N|}}f(\Lambda_Nx)m_N(\de x)=\int_H f \de m\quad\text{for }f\in C_b(H),
\end{equation}
is preserved under a slight modification of the height map $\Lambda_N$, while ${(m_N)}_{N\in\N}$ and $m$ are fixed.
This question is tackled under the additional assumption that the height map $\Lambda_N$ is of a particular type.
We assume that a bijection $\{1,\dots,|G_N|\}$ $\to$ $G_N$, $i\mapsto p_i$, is given such that 
\begin{equation}\label{eqn:heightDef}
	\Lambda_N x(z)=N^{\frac{d}{2}-1}\sum_{i=1}^{|G_N|}x_i\,\Xi(Nz- p_i),\quad x\in \R^{|G_N|},\, z\in D,
\end{equation}
for some measurable function $\Xi:\R^d\to[0,1]$, which is independent of the parameter $N$.
The scaling factor of $N^{\frac{d}{2}-1}$ is merely a convention, as any re-scaling of the height maps \eqref{eqn:heightDef} could simply be absorbed by considering the accordingly re-defined measures ${(m_N)}_{N\in\N}$ in \eqref{eqn:wmcHeight}. 

As an answer to the stability question raised above, we can explicitly give a set $\mathcal A$ of functions $\R^d\to [0,1]$ such that, either \eqref{eqn:wmcHeight} holds for every choice $\Xi\in\mathcal A$ in \eqref{eqn:heightDef}, or there is no  $\Xi\in\mathcal A$ at all to choose in \eqref{eqn:heightDef} in order to obtain \eqref{eqn:wmcHeight}. As an consequence of this section's main result, Theorem \ref{thm:HeightMaps}, it is irrelevant to the validity of \eqref{eqn:wmcHeight} whether one chooses linearly interpolating height maps, or piecewise constant interpolating height maps (see Examples \ref{exa:arche}, \ref{exa:wettingM}, \ref{exa:sfPoly} and \ref{exa:ZambCip}).
$\mathcal A$ is the set of all functions specified by the following condition.

\begin{condition}\label{condi:Xi}
	\begin{thm(vi)}
		$\Xi:\R^d\to[0,1]$ is measurable with the following properties:
		\item $\Xi$ vanishes outside the centred $d$-cube with side-length $2$, i.e.~$\supp\Xi\subseteq[-1,1]^d$.
		\item The integral $\int_{\R^d}\Xi(z)\de z$ equals $1$.
		\item The family $\{\Xi(\,\cdot\,-\,\mathbf k)$ $|$ $\mathbf k\in\Z^d\}$, containing all shifts of $\Xi$ by lattice points, form a partition of unity, i.e.~
		\begin{equation*}
			\sum_{\mathbf k\in\Z^d}\Xi(z-\mathbf k)=1,\quad z\in\R^d.
		\end{equation*}
		\item The strict inequality $\displaystyle\int_{\R^d}\Xi(z)^2\de z>\frac{1}{2}$ holds true.
	\end{thm(vi)}
\end{condition}

One obvious choice for $\Xi$ to meet the criteria of Condition \ref{condi:Xi} is the indicator function $\eins_{[-\frac{1}{2},\frac{1}{2})^d}$ of a semi-open cube with side-length $1$.	Alternatively, the tent function can provide a legitimate choice for $\Xi$. We discuss it for the cases $d=1$ and $d=2$ in the proceeding examples.
For an arbitrary index set $I$ and functions $f_i:\R^d\to\R$, $i\in I$, let
\begin{gather*}
	(f_i\land f_j)(z):=\min(\{f_i(z),f_j(z)\}),\\
	\Big(\bigwedge_{i\in I}f_i\Big)(z):=\inf(\{f_i(z)\,|\,i\in I\})
\end{gather*}
for $z\in\R^d$.
The positive part $z\mapsto \max(\{0,f(z)\})$ of a function $f:\R^d\to\R$ is denoted by $f^+$.
Moreover, $\pi_i:\R^d\to\R$ denotes the projection onto the $i$-th coordinate for $i\in\{1,\dots,d\}$.

\begin{exa}\label{exa:arche}\textsc{The tent function}
	\begin{thm(ii)}
		\item Let $d=1$ and \lq$\textnormal{id}$\rq\, be the identity on $\R$. The continuous, piecewise linear function
		\begin{equation}\label{eqn:1dtent}
			\Xi:=\big((1+\textnormal{id})\land(1-\textnormal{id})\big)^+
		\end{equation} 
		meets the criteria of Condition \ref{condi:Xi}, as easily verified.
		To check the fourth criterion, for example,  we calculate
		\begin{equation*}
			\int_{\R}\Xi(z)^2\de z=\int_{-1}^0(1+z)^2\de z+\int_0 ^1 (1-z)^2\de z=\frac{1}{3}+\frac{1}{3}=\frac{2}{3}>\frac{1}{2}.
		\end{equation*}
		\item  Now we consider the case $d=2$. The continuous function
		\begin{equation}\label{eqn:2dtent}
			\Xi:=\Big[\bigwedge_{i,j\in\{1,2\}}
			\big((1+\pi_i-\pi_j)\land(1+\pi_i)\land (1-\pi_i)\big)\Big]^+
		\end{equation}
		meets the criteria of Condition \ref{condi:Xi}. $\Xi$ is the function which results from taking the minimum over a family of six affine linear functions,
		\begin{align*}
			\Big\{\quad &1+\pi_1-\pi_2,\quad 1+\pi_1,\quad 1-\pi_1,\\
			&1+\pi_2-\pi_1,\quad 1+\pi_2,\quad 1-\pi_2\quad\Big\},
		\end{align*}
		and then take its positive part. Simply comparing the values of those six affine linear functions, we get
		\begin{equation}\label{eqn:tentCase}
			\Xi(z)=
			\begin{cases}
				1-z_1&\text{if }0\leq z_2<z_1<1,\\
				1-z_2&\text{if }0\leq z_1\leq z_2<1,\\
				1+z_1-z_2&\text{if }-1\leq z_2-1<z_1<0,\\
				1+z_1&\text{if }-1\leq z_1\leq z_2<0,\\
				1+z_2&\text{if }-1\leq z_2<z_1<0,\\
				1+z_2-z_1&\text{if }-1\leq z_1-1\leq z_2<0,\\
				0&\text{else.}
			\end{cases}
		\end{equation}	
		The fourth criterion of Condition \ref{condi:Xi} can be checked with elementary calculations. It holds
		\begin{align*}
			\int_{\{0\leq\pi_2<\pi_1<1\}}(1-\pi_1)^2(z)\de z=\int_0^1\int_0^{z_1}(1-z_1)^2\de z_2\de z_1=\int_0^1z_1(1-z_1)^2\de z_1=\frac{1}{12}
		\end{align*}
		and
		\begin{align*}
			\int_{\{-1\leq\pi_1-1\leq\pi_2<0\}}(1+\pi_2-\pi_1)^2(z)\de z&=\int_0^1\int_{z_1-1}^0(1+z_2-z_1)^2\de z_2\de z_1\\
			&=\int_0^1-\frac{3}{2}z_1^3+\frac{3}{2}z_1^2-\frac{3}{2}z_1+\frac{5}{6}\de z_1=\frac{5}{24}.
		\end{align*}
		For reasons of symmetry it holds
		\begin{multline*}
			\frac{1}{12}=\int_{\{0\leq\pi_2<\pi_1<1\}}(1-\pi_1)^2(z)\de z=\int_{\{0\leq\pi_1\leq\pi_2<1\}}(1-\pi_2)^2(z)\de z\\
			=\int_{\{-1\leq\pi_2<\pi_1<0\}}(1+\pi_2)^2(z)\de z=\int_{\{-1\leq\pi_1\leq\pi_2<0\}}(1+\pi_1)^2(z)\de z
		\end{multline*}
		and also
		\begin{equation*}
			\frac{5}{24}=\int_{\{-1\leq\pi_1-1\leq\pi_2<0\}}(1+\pi_2-\pi_1)^2(z)\de z=
			\int_{\{-1\leq\pi_2-1<\pi_1<0\}}(1+\pi_1-\pi_2)^2(z)\de z.
		\end{equation*}
	    Hence, from \eqref{eqn:tentCase} it follows
		\begin{equation*}
			\int_{\R^d}\Xi(z)^2\de z=4\cdot\frac{1}{12}+2\cdot\frac{5}{24}=\frac{3}{4}>\frac{1}{2}
		\end{equation*}
		as desired. A more extensive analysis of the tent function can be found in \cite{me21} and \cite{Wit23}. In both of these references, a general dimension $d\in\N$ is considered, complementing the definitions of \eqref{eqn:2dtent} and \eqref{eqn:1dtent} with 
		\begin{equation*}
			\Xi:=\Big[\bigwedge_{i,j\in\{1,\dots,d\}}\big((1+\pi_i-\pi_j)\land(1+\pi_i)\land (1-\pi_i)\big)\Big]^+
		\end{equation*}
		for a case $d>2$.
		For the $d$-dimensional tent function, where $d\in\N$, items (i) to (iii) of Condition \ref{condi:Xi} are proven in \cite[Sect.~2.1]{me21}, respectively \cite[Sect.~3.1.1]{Wit23}.
	\end{thm(ii)}
\end{exa}

Before turning back to the general case, another remark about the interpolating property of the tent function is inserted, which is exploited later on, in Examples \ref{exa:wettingM} \& \ref{exa:sfPoly}. For the abstract results of this section, Remark \ref{rem:Xi} does not play any role. The reader who is just interested in the main results of 
Section \ref{sec:hvar}, Proposition \ref{prop:oder} and Theorem \ref{thm:HeightMaps}, may skip it.

\begin{remark}\label{rem:Xi}
	    \begin{thm(ii)}
	    	\item We consider the one-dimensional tent function of Example \ref{exa:arche} (i). 
	    	For any given sequence of values
	    	${(\lambda_{ k})}_{ k\in\Z}\in \R^{\Z}$, the function
	    	\begin{equation}\label{eqn:interpol1d}
	    		\R\ni z\mapsto \sum_{k\in\Z}\lambda_{ k}\,\Xi(z- k)
	    	\end{equation}
	    	is the unique element of the space of piecewise linear, continuous functions
	    	\begin{align*}
	    		\Big\{ f\in C(\R)\,\Big|\,\text{for each }k\in\Z
	    		\text{ there is an affine linear function}&\\a_{k}:\R\to\R
	    		\text{ with }a_{k}(z)=f(z)\text{ for }z\in [k,k+1)&\Big\}
	    	\end{align*}
	    	which interpolates the sample $\{( k,\lambda_ k)$ $|$ $ k\in\Z\}$.
	    	So, \eqref{eqn:interpol1d} has the alternative representation 
	    	\begin{equation}\label{eqn:display1d}
	    		\sum_{k\in\Z}\lambda_{ k}\,\Xi(z- k)=\sum_{k\in\Z}\eins_{[k,k+1)}(z)\Big(\lambda_k+(z-k)(\lambda_{k+1}-\lambda_k)\Big).
	    	\end{equation}

	    \item For the two-dimensional case, an analogous representation as in \eqref{eqn:display1d} can be derived, which displays the connection between the tent function and piecewise linear interpolation.
	    The semi-open unit square $[0,1)^2$ is the disjoint union of two triangles $T_{1, 0}$ and $T_{2, 0}$, where
	    \begin{equation*}
	    	T_{1,0}:=\{z\in\R^2\,|\,0\leq z_2<z_1<1\}\quad\text{and}\quad
	    	T_{2,0}:=\{z\in \R^2\,|\, 0\leq z_1\leq z_2<1\}.
	    \end{equation*}
	    Shifting those triangles results in a partition of $\R^2$ through the family of disjoint sets
	    \begin{equation*}
	    	T_{i,\mathbf k}:=\{\mathbf k+z\,|\,z\in T_{i,0}\},\quad i=1,2,\quad \mathbf k\in\Z^2.
	    \end{equation*}
	    The representation of the two-dimensional tent function from \eqref{eqn:tentCase} shows that
	    the function $\Xi$ considered in Example \ref{exa:arche} (ii) is a member of the linear space
	    \begin{align*}
	    	Q:=\Big\{ f\in C(\R^2)\,\Big|\,\text{for each }i\in\{1,2\}\text{ and }\mathbf k\in\Z^2
	    	\text{ there is an affine linear function}&\\a_{i,\mathbf k}:\R^2\to\R
	    	\text{ with }a_{i,\mathbf k}(z)=f(z)\text{ for }z\in T_{i,\mathbf k}&\Big\}.
	    \end{align*}
	    The same applies to all shifted functions $\Xi(\,\cdot\, -\mathbf k)$, $\mathbf k\in\Z^2$, of course.
	    Let ${(\lambda_{\mathbf k})}_{\mathbf k\in\Z^2}\in \R^{\Z^2}$.
		Since $\Xi(\mathbf l -\mathbf k)=\delta_{\mathbf l,\mathbf k}$ for $\mathbf l$, $\mathbf k$ $\in\Z^2$, 
		the function
		\begin{equation}\label{eqn:interpo2d}
			\R^2\ni z\mapsto \sum_{\mathbf k\in\Z^2}\lambda_{\mathbf k}\,\Xi(z-\mathbf k)
		\end{equation}
		is an element in $Q$ which interpolates the sample $\{(\mathbf k,\lambda_\mathbf k)$ $|$ $\mathbf k\in\Z^2\}$.
		It is the unique element of $Q$ with that property, as every affine linear function is uniquely determined by its values on the corner points of a triangle in the two-dimensional plane. 
		Denoting the unit vectors of $\R^2$ by $\univ_1$, $\univ_2$, the corner points of 
		 $T_{1,\mathbf k}$ are $\mathbf k$, $\mathbf k+\univ_1$,  $\mathbf k+\univ_1+\univ_2$,
		while the corner points of $T_{2,\mathbf k}$ are $\mathbf k$, $\mathbf k+\univ_2$,  $\mathbf k+\univ_1+\univ_2$, for every $\mathbf k\in\Z^2$.  This argumentation shows that the map of \eqref{eqn:interpo2d} has the following representation, which reflects its interpolating property:
		\begin{align}\label{eqn:display2d}
			&\sum_{\mathbf k\in\Z^2}\lambda_{\mathbf k}\,\Xi(z-\mathbf k)\nonumber\\
			&=\sum_{\mathbf k\in\Z^2}\eins_{T_{1,\mathbf k}}(z)\Big(\lambda_\mathbf k+
			\pi_1(z-\mathbf k)(\lambda_{\mathbf k+\univ_1}-\lambda_\mathbf k)
			+\pi_2(z-\mathbf k)(\lambda_{\mathbf k+\univ_1+\univ_2}-\lambda_{\mathbf k+\univ_1})\Big)\nonumber\\
			&\qquad+\eins_{T_{2,\mathbf k}}(z)\Big(\lambda_\mathbf k+
			\pi_2(z-\mathbf k)(\lambda_{\mathbf k+\univ_2}-\lambda_\mathbf k)
			+\pi_1(z-\mathbf k)(\lambda_{\mathbf k+\univ_1+\univ_2}-\lambda_{\mathbf k+\univ_2})\Big).
		\end{align}
		This implies the interpolating property  of the map \eqref{eqn:heightDef} if $d=2$ and $\Xi$ is chosen as the tent function \eqref{eqn:2dtent}.
	\end{thm(ii)}
\end{remark}

On the Hilbert space $l^2(\Z^d)$ of all square-summable, multi-indexed sequences ${(\lambda_{\mathbf k})}_{\mathbf k\in\Z^d}$ $\in\R^{\Z^d}$ the infinite (multi-indexed) matrix
\begin{equation*}
	A_{\mathbf k,\mathbf l}:=\int_{\R^d}\Xi(z-\mathbf k)\,\Xi(z-\mathbf l)\de z, \quad \mathbf k,\mathbf l\in\Z^d,
\end{equation*}
corresponds to a symmetric linear operator $A$. The lower estimate in Proposition \ref{prop:oder} below is based on the existence of an upper bound for the operator norm of $A^{-1}$. 
Item (iv) of Condition \ref{condi:Xi} is a sufficient criterion for $A^{-1}$ to be a bounded linear operator on $l^2(\Z^d)$, as the next lemma shows.

\begin{lemma}\label{lem:ellipt}
	Let ${(\lambda_{\mathbf k})}_{\mathbf k\in\Z^d}\in \R^{\Z^d}$ such that $\{\mathbf k\in\Z^d$ $|$ $\lambda_\mathbf k\neq 0\}$ is a finite set and $\Xi$ as in Condition \ref{condi:Xi}.
	It holds
	\begin{equation*}
		\sum_{\mathbf k\in\Z^d}\sum_{\mathbf l\in\Z^d}\lambda_{\mathbf k}A_{\mathbf k,\mathbf l}\lambda_{\mathbf l}\geq c\sum_{\mathbf k\in \Z^d}\lambda_{\mathbf k}^2,\qquad\text{where}\quad c:=2\int_{\R^d}\Xi(z)^2\de z-1.
	\end{equation*}
\end{lemma}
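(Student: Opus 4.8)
The plan is to exploit the compact support of $\Xi$ (item (i) of Condition \ref{condi:Xi}) to see that $A$ is a banded operator: $A_{\mathbf k,\mathbf l}=0$ unless $\|\mathbf k-\mathbf l\|_\infty\leq 1$, since the supports $[-1,1]^d+\mathbf k$ and $[-1,1]^d+\mathbf l$ have disjoint interiors otherwise. In particular the diagonal entry is $A_{\mathbf k,\mathbf k}=\int_{\R^d}\Xi(z)^2\de z=:a$, the same for every $\mathbf k$. So the quadratic form splits as
\begin{equation*}
	\sum_{\mathbf k,\mathbf l}\lambda_{\mathbf k}A_{\mathbf k,\mathbf l}\lambda_{\mathbf l}
	= a\sum_{\mathbf k}\lambda_{\mathbf k}^2 + \sum_{\mathbf k\neq\mathbf l}\lambda_{\mathbf k}A_{\mathbf k,\mathbf l}\lambda_{\mathbf l},
\end{equation*}
and it suffices to bound the off-diagonal sum below by $(a-1)\sum_{\mathbf k}\lambda_{\mathbf k}^2 = -(1-a)\sum_{\mathbf k}\lambda_{\mathbf k}^2$, i.e.\ to show $\big|\sum_{\mathbf k\neq\mathbf l}\lambda_{\mathbf k}A_{\mathbf k,\mathbf l}\lambda_{\mathbf l}\big|\leq (1-a)\sum_{\mathbf k}\lambda_{\mathbf k}^2$ (the bound is only interesting when $a<1$; when $a\geq1$ we would get something even stronger, but in any case $c=2a-1$).

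The key trick to control the off-diagonal part without computing the individual entries $A_{\mathbf k,\mathbf l}$ is to go back to the $L^2$ integral and use the partition-of-unity property (item (iii)). Write $\sum_{\mathbf k\neq\mathbf l}\lambda_{\mathbf k}A_{\mathbf k,\mathbf l}\lambda_{\mathbf l} = \sum_{\mathbf k,\mathbf l}\lambda_{\mathbf k}\lambda_{\mathbf l}A_{\mathbf k,\mathbf l} - a\sum_{\mathbf k}\lambda_{\mathbf k}^2 = \int_{\R^d}\big(\sum_{\mathbf k}\lambda_{\mathbf k}\Xi(z-\mathbf k)\big)^2\de z - a\sum_{\mathbf k}\lambda_{\mathbf k}^2$; but more usefully, use
\begin{equation*}
	2\lambda_{\mathbf k}\lambda_{\mathbf l}\Xi(z-\mathbf k)\Xi(z-\mathbf l)\geq -\big(\lambda_{\mathbf k}^2\Xi(z-\mathbf l)\Xi(z-\mathbf k)+\lambda_{\mathbf l}^2\Xi(z-\mathbf k)\Xi(z-\mathbf l)\big)
\end{equation*}
(a pointwise AM–GM estimate, valid since $\Xi\geq 0$). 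Summing this over the finitely many pairs $\mathbf k\neq\mathbf l$ and integrating, the right-hand side becomes $-2\sum_{\mathbf k}\lambda_{\mathbf k}^2\sum_{\mathbf l\neq\mathbf k}\int_{\R^d}\Xi(z-\mathbf k)\Xi(z-\mathbf l)\de z$. Now for fixed $\mathbf k$,
\begin{equation*}
	\sum_{\mathbf l\neq\mathbf k}\int_{\R^d}\Xi(z-\mathbf k)\Xi(z-\mathbf l)\de z
	=\int_{\R^d}\Xi(z-\mathbf k)\Big(\sum_{\mathbf l\in\Z^d}\Xi(z-\mathbf l)-\Xi(z-\mathbf k)\Big)\de z
	=\int_{\R^d}\Xi(z-\mathbf k)\big(1-\Xi(z-\mathbf k)\big)\de z = 1-a,
\end{equation*}
using item (iii) for the inner sum and item (ii) for $\int\Xi=1$. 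Hence $\sum_{\mathbf k\neq\mathbf l}\lambda_{\mathbf k}A_{\mathbf k,\mathbf l}\lambda_{\mathbf l}\geq -2(1-a)\sum_{\mathbf k}\lambda_{\mathbf k}^2$, and adding the diagonal term gives $\sum_{\mathbf k,\mathbf l}\lambda_{\mathbf k}A_{\mathbf k,\mathbf l}\lambda_{\mathbf l}\geq (a-2(1-a))\sum_{\mathbf k}\lambda_{\mathbf k}^2 = (2a-1)\sum_{\mathbf k}\lambda_{\mathbf k}^2 = c\sum_{\mathbf k}\lambda_{\mathbf k}^2$, which is exactly the claim.

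I expect the main obstacle to be purely bookkeeping: making the interchange of the (finite) sums over $\mathbf k,\mathbf l$ with the integral rigorous — which is immediate here since $\{\mathbf k:\lambda_{\mathbf k}\neq0\}$ is finite and $\Xi$ is bounded with compact support — and being careful that the pointwise AM–GM inequality is applied only with nonnegative weights $\Xi(z-\mathbf k)\Xi(z-\mathbf l)\geq 0$, so that no sign is lost. Note that item (iv) of Condition \ref{condi:Xi}, i.e.\ $a>\frac12$, is used only to make the constant $c=2a-1$ strictly positive, which is what turns the estimate into the coercivity statement needed for the boundedness of $A^{-1}$; the inequality itself holds regardless.
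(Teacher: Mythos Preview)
Your approach is essentially the same as the paper's: both split off the diagonal, use $A_{\mathbf k,\mathbf l}\ge 0$ together with the row-sum identity $\sum_{\mathbf l}A_{\mathbf k,\mathbf l}=1$ (from items (ii) and (iii)), and bound the off-diagonal part via $(\lambda_{\mathbf k}+\lambda_{\mathbf l})^2\ge 0$, which is exactly your AM--GM step. The paper packages this as the identity $\sum_{\mathbf k,\mathbf l}\lambda_{\mathbf k}A_{\mathbf k,\mathbf l}\lambda_{\mathbf l}=\tfrac12\sum_{\mathbf k\neq\mathbf l}A_{\mathbf k,\mathbf l}(\lambda_{\mathbf k}+\lambda_{\mathbf l})^2+c\sum_{\mathbf k}\lambda_{\mathbf k}^2$, but the content is identical.

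There is, however, a pair of cancelling slips in your write-up: from your pointwise inequality the summed/integrated \emph{left}-hand side is $2\sum_{\mathbf k\neq\mathbf l}\lambda_{\mathbf k}A_{\mathbf k,\mathbf l}\lambda_{\mathbf l}$, so after dividing by $2$ the correct bound is $\sum_{\mathbf k\neq\mathbf l}\lambda_{\mathbf k}A_{\mathbf k,\mathbf l}\lambda_{\mathbf l}\geq -(1-a)\sum_{\mathbf k}\lambda_{\mathbf k}^2$, not $-2(1-a)\sum_{\mathbf k}\lambda_{\mathbf k}^2$; and $a-2(1-a)=3a-2$, not $2a-1$. With the first slip fixed, the final step reads $a-(1-a)=2a-1=c$ and the proof goes through as intended.
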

\begin{proof}
	Since $A_{\mathbf k,\mathbf k}=\frac{c+1}{2}$ for $\mathbf k\in\Z^d$, 
	\begin{align}\label{eqn:basiC}
		\sum_{\mathbf k\in\Z^d}\sum_{\mathbf l\in\Z^d}\lambda_{\mathbf k}A_{\mathbf k,\mathbf l}\lambda_{\mathbf l}&=\sum_{\mathbf k\in \Z^d}A_{\mathbf k,\mathbf k}\lambda_{\mathbf k}^2+\sum_{\substack{\mathbf k,\mathbf l\in\Z^d\\\mathbf l\neq \mathbf k}}A_{\mathbf k,\mathbf l}\lambda_{\mathbf k}\lambda_{\mathbf l}\nonumber\\
		&=\frac{1}{2}\sum_{\mathbf k\in \Z^d}\lambda_{\mathbf k}^2+\sum_{\substack{\mathbf k,\mathbf l\in\Z^d\\\mathbf l\neq \mathbf k}}A_{\mathbf k,\mathbf l}\lambda_{\mathbf k}\lambda_{\mathbf l}+\frac{c}{2}\sum_{\mathbf k\in \Z^d}\lambda_{\mathbf k}^2.
	\end{align}
	Moreover, since $\int_{\R^d}\Xi(z)\de z=1$ and $\sum_{\mathbf l\in\Z^d}\Xi(z-\mathbf l)=1$ for $z\in\R^d$,  
	\begin{align}\label{eqn:thisEq}
		1&=2\int_{\R^d}\Xi(z-\mathbf k)\de z-1=2\sum_{\mathbf l\in\Z^d}A_{\mathbf k,\mathbf l}-1\nonumber\\
		&=2A_{\mathbf k,\mathbf k}-1+2\sum_{\substack{\mathbf l\in\Z^d\\\mathbf l\neq \mathbf k}}A_{\mathbf k,\mathbf l}
		=c+2\sum_{\substack{\mathbf l\in\Z^d\\\mathbf l\neq \mathbf k}}A_{\mathbf k,\mathbf l},\quad\mathbf k\in\Z^d.
	\end{align}
	Multiplying \eqref{eqn:thisEq} by $\lambda_\mathbf k^2/2$ for each $\mathbf k\in\Z^2$ and then summing up over $\mathbf k\in\Z^d$ leads to
	\begin{equation}\label{eqn:plugIn}
		\frac{1}{2}\sum_{\mathbf k\in \Z^d}\lambda_{\mathbf k}^2=\frac{c}{2}\sum_{\mathbf k\in \Z^d}\lambda_{\mathbf k}^2+\sum_{\substack{\mathbf k,\mathbf l\in\Z^d\\\mathbf l\neq \mathbf k}}A_{\mathbf k,\mathbf l}\lambda_{\mathbf k}^2.
	\end{equation}
    Rewriting the right-hand side of \eqref{eqn:basiC} under use of \eqref{eqn:plugIn}, one obtains
	\begin{align*}
		\sum_{\mathbf k\in\Z^d}\sum_{\mathbf l\in\Z^d}\lambda_{\mathbf k}A_{\mathbf k,\mathbf l}\lambda_{\mathbf l}
		&=\sum_{\substack{\mathbf k,\mathbf l\in\Z^d\\\mathbf l\neq \mathbf k}}A_{\mathbf k,\mathbf l}\big(\lambda_\mathbf k^2+\lambda_{\mathbf k}\lambda_{\mathbf l}\big)+c\sum_{\mathbf k\in \Z^d}\lambda_{\mathbf k}^2\\
		&=\frac{1}{2}\sum_{\mathbf k,\mathbf l\in \Z^d}A_{\mathbf k,\mathbf l}\big(\lambda_\mathbf k+\lambda_\mathbf l\big)^2+ c\sum_{\mathbf k\in \Z^d}\lambda_{\mathbf k}^2\geq c\sum_{\mathbf k\in \Z^d}\lambda_{\mathbf k}^2
	\end{align*}
	as desired.
\end{proof}

We make a further assumption on the height maps $\Lambda_N$, $N\in\N$, concerning the sets of Grid points $G_N$. We assume there is a sequence of increasing sets $D_1\subseteq D_2\subseteq\dots$ contained in $\overline D$ such that 
\begin{equation*}
	\de z\Big(\overline D\setminus\bigcup_{N\in\N}D_N\Big)=0
\end{equation*}
and for $N\in\N$ we have
\begin{align}\label{eqn:GEen}
	\text{ $D_N$ is open w.r.t.~the trace topology of $\overline D$,}\nonumber\\
	G_N=ND_N\cap\Z^d.
\end{align}
To shorten notation, let $k_N:=|G_N|$ and 
\begin{equation*}
\xi_N^p(z):=\Xi(Nz-p),\quad z\in\R^d,\,p\in G_N.
\end{equation*}
Then, according to \eqref{eqn:heightDef}
there is a bijection $\{1,\dots,k_N\}$ $\to$ $G_N$, $i\mapsto p_i$, such that, setting $\xi_{N,i}:=\xi_N^{p_i}$ for $i\in{1,\dots,k_N}$,
\begin{equation}\label{eqn:heightDef2}
	\Lambda_N x(z)=N^{\frac{d}{2}-1}\sum_{i=1}^{k_N}x_i\,\xi_{N,i}(z),\quad x\in \R^{k_N},\, z\in D.
\end{equation}

\begin{exa}\label{exa:wettingM}\textsc{Pining Model}
	
		In the one-dimensional case with $D=(0,1)$, we can select $D_N=(0,1]$ for $N\in\N$, and 
	   $\Xi$ as in Example \ref{exa:arche} (i). 
	   So,
		$G_N=\{1,\dots, N\}$ and the natural ordering leads to the definition $p_i:=i$, $i=1,\dots,N$.
		Choosing this particular set-up in \eqref{eqn:heightDef2} yields
		\begin{equation*}
			\Lambda_N x(z)=\frac{1}{\sqrt N}\sum_{i=1}^N x_i\max\Big(\Big\{0,\min\big(\big\{1-i+Nz,1+i-Nz\big\}\big)\Big\}\Big),
		\end{equation*}
		$z\in(0,1)$, $x\in\R^N$.
		As a consequence of Remark \ref{rem:Xi} (i), $\Lambda_Nx(\cdot)$
		 restricts to an affine linear function on each interval 
		$(\frac{i-1}{N},\frac{i}{N})$, $i=1,\dots, N$, and interpolates the values $\frac{x_1}{\sqrt N},\dots,\frac{x_N}{\sqrt N}$ over the grid points $\frac{1}{N},$ $\frac{2}{N}$, $\dots, 1$. Setting $x_0:=0$, a different way to represent this particular height map is 
		\begin{equation*}
				\Lambda_N x(z)=	\frac{1}{\sqrt N}\sum_{i=0}^{N-1} \Big(x_{i}+(Nz-i)(x_{i+1}-x_{i})\Big)
						\eins_{[\frac{i}{N},\frac{i+1}{N})}(z),\quad z\in(0,1),\,x\in\R^N,
		\end{equation*} 
		as follows from \eqref{eqn:display1d} after re-scaling. We cite a result from \cite{zambotti05}.
		Let $V:\R\to\R\cup\{\infty\}$ such that $\exp(-V(\cdot))$ is continuous, $V(0)<\infty$ and
		\begin{equation*}
			\int_{\R}e^{-V(s)}\de s=:\kappa\in\R
             \quad\text{as well as}\quad
			\frac{1}{\kappa}\int_\R s^2 e^{-V(s)}\de s:=\sigma^2\in \R.
		\end{equation*}
		For $x\in\R^N$, $N\in\N$, setting $x_0:=0$, $x_{N+1}:=0$, let 
		\begin{equation*}
			H_N(x):=\sum_{i=0}^NV(\sigma x_{i+1}-\sigma x_i).
		\end{equation*}
		Now, for $\beta\in[0,\infty)$ we define a probability measure on $\R^N$ (with support on $[0,\infty)^N$) as
	\begin{align*}
		m_N^\beta(\de x):=\frac{1}{Z_{\beta,N}}\exp(-H_N(x))
		\prod_{i=1}^N\big(\eins_{[0,\infty)}(x_i)\de x_i+\beta\delta_0(\de x_i)\big)\\
		\text{with}\quad Z_{\beta,N}:=\int_{[0,\infty)^N}\exp({-H_N(x)})\prod_{i=1}^N\big(\eins_{[0,\infty)}(x_i)\de x_i+\beta\delta_0(\de x_i)\big)
	\end{align*}
	and let
	\begin{equation*}
		\beta_c:=\frac{\kappa}{1+\sum_{N=1}^\infty Z_{0,N}}.
	\end{equation*}
	Due to \cite[Thm.~1]{zambotti05} the sequence ${(m_N^\beta\circ\Lambda_N)}_{N\in\N}$ has a limit $m^\beta$ regarding weak measure convergence on $C([0,1])$ (equipped with the supremum-norm) for every $\beta\in[0,\infty)$. Hence, in particular, this is true regarding weak measure convergence on $H=L^2((0,1),\de z)$. The model undergoes a phase transition as the limit $m^\beta$ depends on the pinning strength $\beta$ as follows:
	\begin{itemize}
		\item If $0\leq\beta<\beta_c$, then $m^\beta$ is the law of the three-dimensional Bessel bridge between $0$ and $0$  on the time interval $[0,1]$.
		\item In the critical case $m^{\beta_c}$ is the law of the one-dimensional Bessel bridge between $0$ and $0$ on the time interval $[0,1]$.
		\item If $\beta_c<\beta$, then $m^\beta$ assigns the value $1$ to the singleton of the constant zero-function.  on $[0,1]$.
	\end{itemize}
\end{exa}

\begin{exa}\label{exa:sfPoly}\textsc{Semiflexible Polymers and $\Delta\varphi$-model}
	
	In the two-dimensional case with $D=(0,1)^2$, we can 
	choose 
	\begin{equation*}
	D_N=\big(\frac{2}{N},\frac{N-2}{N}\big)\times \big(\frac{2}{N},\frac{N-2}{N}\big),
	\end{equation*}
	w.l.o.g.~assuming $N\geq 6$.
	As usual, let $G_N:=ND_N\cap\Z^2$. 
	We cite a result on weak measure convergence from \cite{cipriani20}.
	The method, used there, interpolates a state
	${(\lambda_{\mathbf k})}_{\mathbf k\in G_N}\in\R^{G_N}$ by identifying it with the  interface $S({(\lambda_{\mathbf k})}_{\mathbf k\in G_N},\,\cdot\,):D\to \R$,
	\begin{multline}\label{eqn:ciprInt}
		S\big({(\lambda_{\mathbf k})}_{\mathbf k\in G_N},z\big)\\
		:=\begin{cases}
		\lambda_{[Nz]}+
		\pi_1(Nz-[Nz])(\lambda_{\mathbf [Nz]+\univ_1}-\lambda_{[Nz]})
		+\pi_2(Nz-[Nz])(\lambda_{[Nz]+\univ_1+\univ_2}-\lambda_{[Nz]+\univ_1})
		&\\
		\lambda_{[Nz]}+
		\pi_2(Nz-[Nz])(\lambda_{[Nz]+\univ_2}-\lambda_{[Nz]})
		+\pi_1(Nz-[Nz])(\lambda_{[Nz]+\univ_1+\univ_2}-\lambda_{[Nz]+\univ_2})
		\end{cases}\\
		\begin{aligned}
		\text{if}\quad \pi_2( Nz -[Nz])&<\pi_1( Nz -[Nz])\text{ in the first case,}\\
		\pi_1( Nz -[Nz])&\leq\pi_2( Nz -[Nz])\text{ in the second case}
		\end{aligned}
	\end{multline}
	(cf.~\cite[Eq.~(2.6)]{cipriani20}), where $[Nz]$ denotes the element in $\Z^2$ with $Nz-[Nz]\in[0,1)^2$ for $z\in\Z^2$.
	This is just a differently scaled version of \eqref{eqn:display2d}, as we show now.
	For $T_{1,\mathbf k}$, $T_{2,\mathbf k}$
	as in Remark \ref{rem:Xi} (ii), $\mathbf k\in\Z^2$ and $z\in \R^2$ it holds
	\begin{equation*}
		\eins_{T_{1,\mathbf k}}(Nz)=
		\begin{cases}
			1&\text{if}\quad \mathbf k=[Nz]\quad\land \quad \pi_2( Nz -\mathbf k)<\pi_1( Nz -\mathbf k)\\
			0&\text{else,}
		\end{cases}
	\end{equation*}
	while 
	\begin{equation*}
		\eins_{T_{2,\mathbf k}}(Nz)=
		\begin{cases}
			1&\text{if}\quad\mathbf k=[Nz]\quad\land \quad \pi_1( Nz -\mathbf k)\leq\pi_2( Nz -\mathbf k)\\
			0&\text{else.}
		\end{cases}
	\end{equation*}
	By Remark \ref{rem:Xi} (ii), with $\Xi$ as in Example \ref{exa:arche} (ii), it holds
	\begin{align*}
		&S\big({(\lambda_{\mathbf k})}_{\mathbf k\in G_N},z\big)\\
		&=\sum_{\mathbf k\in L_N}\eins_{T_{1,\mathbf k}}(Nz)\Big(\lambda_\mathbf k+\pi_1(Nz-\mathbf k)(\lambda_{\mathbf k+\univ_1}-\lambda_\mathbf k)
		+\pi_2(Nz-\mathbf k)(\lambda_{\mathbf k+\univ_1+\univ_2}-\lambda_{\mathbf k+\univ_1})\Big)\\
		&+\sum_{\mathbf k\in G_N}\eins_{T_{2,\mathbf k}}(Nz)
		\Big(\lambda_\mathbf k+
		\pi_2(Nz-\mathbf k)(\lambda_{\mathbf k+\univ_2}-\lambda_\mathbf k)
		+\pi_1(Nz-\mathbf k)(\lambda_{\mathbf k+\univ_1+\univ_2}-\lambda_{\mathbf k+\univ_2})\Big)\\
		&=\sum_{\mathbf k\in G_N}\lambda_{\mathbf k}\,\Xi(Nz-\mathbf k).
	\end{align*}
	Consequently, if we fix an arbitrary bijection $I_N:\{1,\dots,k_N\}\to G_N$, where $k_N:=|G_N|$,
	and initialize the height map from \eqref{eqn:heightDef2} by choosing $\Xi$ from Example \ref{exa:arche} (ii),
	then $\Lambda_N$ and the interpolation \eqref{eqn:ciprInt} from \cite{cipriani20} are related as follows:
	\begin{gather*}
	\Lambda_Nx(z)=S\big({(\widetilde x_{\mathbf k})}_{\mathbf k\in L_N},z\big),\quad z\in D,\\
	\text{where}\quad\widetilde x_\mathbf k:=x_{I_N^{-1}(\mathbf k)},\quad \mathbf k\in G_N,\,x\in\R^{k_N}.
	\end{gather*}
	For $x\in\R^{k_N}$ we extend ${(\widetilde x_\mathbf k)}_\mathbf k$
	to an element of $\R^{Z^2}$ by setting $\widetilde x_\mathbf k=0$ for $\mathbf k\in Z^2\setminus G_N$.
	For a scaling $\alpha:\N\to (0,\infty)$ we define
	\begin{equation*}
		H_N^\alpha(x):=\sum_{\mathbf k\in Z^2}\Big[\frac{1}{16}
		\sum_{\substack {\mathbf l\in\Z^2:\\|\mathbf l-\mathbf k|_\textnormal{euc}=1}} (\widetilde x_\mathbf k-\widetilde x_\mathbf l)^2+\alpha(N)\Big(\frac{1}{2d}\sum_{\substack {\mathbf l\in\Z^2:\\|\mathbf l-\mathbf k|_\textnormal{euc}=1}} (\widetilde x_\mathbf k-\widetilde  x_\mathbf l)\Big)^2\,\Big]
	\end{equation*}
	for $N\in\N$ and $x\in\R^{k_N}$.
	Let $m^\alpha_N$ be the Gaussian measure on $\R^{k_N}$ with density 
	\begin{equation*}
	\R^{k_N}\ni x\mapsto \frac{1}{Z_N} \exp({- 4H^\alpha_N(x)}),\quad\text{where } Z_N:=\int_{\R^{k_N}}\exp({-4H^\alpha_N(x)})\de x.
	\end{equation*}
	By virtue of \cite[Thm.~2.1 (1) \& (2)]{cipriani20}, the sequence ${(m^\alpha_N\circ\Lambda_N)}_{N\in\N}$ has a limit $m^\alpha$ regarding weak measure convergence on $C(\overline D)$ (equipped with the supremum-norm) in the following cases, depending on the asymptotic behaviour of ${(\alpha(N))}_{N\in\N}$.
	\begin{itemize}
		\item If $\alpha(N)\gg N^2$, then the limit $m^\alpha$ is the law of the continuous Gaussian process on $\overline D$ whose covariance operator on $H$ is the
		inverse of the operator
		\begin{equation*}
			\Delta^2\quad\text{with domain }H^{4,2}(D)\cap H_0^{3,2}(D).
		\end{equation*}
		\item If $\alpha(N)\sim 2 N^2$, then the limit $m^\alpha$ is the law of the continuous Gaussian process on $\overline D$ whose covariance operator on $H$ is the
		inverse of the operator
		\begin{equation*}
			-\Delta+\Delta^2\quad\text{with domain }H^{4,2}(D)\cap H_0^{3,2}(D).
		\end{equation*}
	\end{itemize}
	 In particular, this is true regarding weak measure convergence on $H=L^2(D,\de z)$.
\end{exa}

\begin{prop}\label{prop:oder} It holds
	\begin{equation*}\label{eqn:tightOne:5}
		cN^{-2}|x|^2_\textnormal{euc}\leq {|\Lambda_Nx|}^2_H\leq 3^dN^{-2}|x|^2_\textnormal{euc}
	\end{equation*}
	for $x\in \R^{k_N}$, where $0<c\leq 1$ is a constant, independent from $N$ and $d$.
\end{prop}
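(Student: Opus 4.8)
The plan is to write $|\Lambda_N x|_H^2$ as a quadratic form in $x$ and estimate its matrix from above and below. Expanding the square in $|\Lambda_N x|_H^2=N^{d-2}\int_D\big(\sum_{i=1}^{k_N}x_i\xi_{N,i}(z)\big)^2\de z$ and substituting $w=Nz$ gives
$$|\Lambda_N x|_H^2=N^{-2}\sum_{i,j=1}^{k_N}x_i\,x_j\,B^N_{i,j},\qquad\text{where }\ B^N_{i,j}:=\int_{ND}\Xi(w-p_i)\,\Xi(w-p_j)\de w,$$
so the statement is equivalent to the matrix inequality $c\,I\preceq B^N\preceq 3^d\,I$ uniformly in $N$, and $B^N$ is visibly the truncation to $ND$ of the infinite Gram matrix $A$ of Lemma~\ref{lem:ellipt}. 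For the upper bound I would argue pointwise: because $\supp\Xi\subseteq[-1,1]^d$, for fixed $z$ only the indices $i$ with $p_i\in Nz+[-1,1]^d$ contribute, and that cube meets $\Z^d$ in at most $3^d$ points, so Cauchy--Schwarz gives $\big(\sum_i x_i\xi_{N,i}(z)\big)^2\le 3^d\sum_i x_i^2\,\xi_{N,i}(z)^2$. Integrating term by term, enlarging each integral from $D$ to $\R^d$, substituting $w=Nz$, and using $\int_{\R^d}\Xi^2\le\int_{\R^d}\Xi=1$ (valid since $0\le\Xi\le1$) yields $|\Lambda_N x|_H^2\le 3^d N^{-2}|x|_\euc^2$.

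For the lower bound the key input is Lemma~\ref{lem:ellipt}. Running the algebra of its proof for the symmetric matrix $B^N$, one obtains the exact identity
$$\sum_{i,j}x_i\,x_j\,B^N_{i,j}=\tfrac12\sum_{i\neq j}B^N_{i,j}(x_i+x_j)^2+\sum_i\Big(2B^N_{i,i}-\sum_{j}B^N_{i,j}\Big)x_i^2 ,$$
in which the first sum is non-negative. For a grid point $p_i$ lying sufficiently deep inside $ND_N$ — so that $\supp{\Xi(\cdot-p_i)}\subseteq ND$ and the lattice points meeting that support all belong to $G_N$ — the partition-of-unity property gives $B^N_{i,i}=\int_{\R^d}\Xi^2$ and $\sum_j B^N_{i,j}=\int_{\R^d}\Xi=1$, hence $2B^N_{i,i}-\sum_j B^N_{i,j}=2\int\Xi^2-1=:c_0>0$, which is precisely the constant of Lemma~\ref{lem:ellipt}. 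Thus the contribution of such ``interior'' grid points to the quadratic form is at least $c_0$ times the corresponding part of $|x|_\euc^2$; in particular, if (as for the $\Delta\varphi$/polymer example of Section~\ref{sec:hvar}) the sets $D_N$ keep every grid point strictly inside $D$, then $B^N$ is a principal submatrix of $A$ and the bound holds with $c=c_0$.

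The genuinely delicate point — and the one I expect to be the main obstacle — is the finitely many ``boundary'' grid points $p_i$ for which part of $\supp{\Xi(\cdot-p_i)}$ falls outside $ND$: for these the diagonal correction $2B^N_{i,i}-\sum_j B^N_{i,j}$ can degrade all the way to $0$ (e.g.\ at a corner of a cube), so diagonal dominance alone no longer delivers positivity. Here one must keep the non-negative term $\tfrac12\sum_{i\neq j}B^N_{i,j}(x_i+x_j)^2$ and combine it with (i) the off-diagonal estimate $\sum_{j\neq i}|B^N_{i,j}|\le\sum_{\mathbf l\neq p_i}A_{p_i,\mathbf l}=1-\int\Xi^2$, which again follows from the partition of unity exactly as in Lemma~\ref{lem:ellipt}, and (ii) a lower bound on the truncated diagonal $B^N_{i,i}=\int_{ND}\Xi(\cdot-p_i)^2$ by a positive constant determined only by how much $L^2$-mass of $\Xi$ a unit cube centred at a point of $N\overline D$ can lose to $\R^d\setminus ND$ — that is, by the behaviour of $D$ near $\partial D$, which is uniform in $N$ under mild regularity of $D$ (and is harmless in all the examples, cf.\ the $z=1$ boundary point in Example~\ref{exa:wettingM}). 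Carrying out this boundary bookkeeping produces a constant $c>0$ independent of $N$; replacing $c$ by $\min\{c,1\}$ if necessary — consistent with the upper bound just proven — gives the form stated in the proposition.
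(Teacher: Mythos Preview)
Your upper bound is essentially the paper's: both use that at most $3^d$ translates of $\Xi$ overlap any given point, together with $\int\Xi^2\le\int\Xi=1$.

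For the lower bound you work with the domain-truncated Gram matrix $B^N_{ij}=\int_{ND}\Xi(\cdot-p_i)\Xi(\cdot-p_j)$ and redo the algebra of Lemma~\ref{lem:ellipt} on it, which is what drives you into the boundary bookkeeping. The paper's route is shorter: extend $x$ by zero to a finitely supported $\lambda\in\R^{\Z^d}$ and apply Lemma~\ref{lem:ellipt} directly on the full lattice, obtaining at once
\[
c\,|x|_{\euc}^2 \;\le\; \int_{\R^d}\Big(\sum_{p\in G_N}\lambda_p\,\Xi(w-p)\Big)^2\de w
\;=\; N^d\int_{\R^d}\Big(\sum_{i}x_i\,\xi_{N,i}(z)\Big)^2\de z
\]
with $c=2\int\Xi^2-1$; there is no truncation of the integration domain and hence no need to revisit the matrix identity. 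The only remaining step is to replace $\int_{\R^d}$ by $\int_D$ so as to recover $|\Lambda_Nx|_H^2$. The paper simply writes this as an equality, which is exact precisely when every $\supp{\xi_{N,i}}$ lies in $D$; that is guaranteed under the standing hypothesis introduced later in the text (grid kept at distance $\ge\sqrt d/N$ from $\R^d\setminus D$), and then the lower bound with $c=2\int\Xi^2-1$ is a one-liner. In the bare generality of \eqref{eqn:GEen} alone the supports can leak across $\partial D$ and that identification is only an inequality in the wrong direction --- so your concern is legitimate there --- but even in that regime the difficulty is confined to this single passage from $\R^d$ to $D$, not to the ellipticity argument; the diagonal/off-diagonal analysis of $B^N$ and the appeal to regularity of $\partial D$ become unnecessary once you see the extend-by-zero trick.
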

\begin{proof}
	    The claimed inequality holds for all $x\in\R^{k_N}$ and some constant $c$, if and only if
	    \begin{equation}\label{eqn:lambdaEq}
	    	\frac{c}{N^d}\sum_{p\in G_N}y_{p}^2\leq
	    	\Big|\sum_{p\in G_N}y_{p}\,\xi_{N}^p\Big|^2_H\leq \frac{3^d}{N^d}\sum_{p\in G_N}y_{p}^2\quad\text{for all }y\in\R^{G_n}.
	    \end{equation}
	    
	    We choose $c:=2\int_{\R^d}\Xi(z)^2\de z-1$ and prove \eqref{eqn:lambdaEq}. Note that $0<c\leq 1$, since $\Xi$ meets Condition \ref{condi:Xi}. Let $y\in \R^{G_N}$ and $\lambda\in\R^{\Z^d}$ such that
	    $\lambda_\mathbf k=y_p$, if $\mathbf k=p$ for some $p\in G_N$, and $\lambda_\mathbf k=0$ else.
		Due to Lemma \ref{lem:ellipt}, it holds
	\begin{align*}
		c\sum_{p\in G_N}y_p^2&=c\sum_{\mathbf k\in\Z^d}\lambda_\mathbf k^2
		\leq \int_{\R^d}\Big(\sum_{\mathbf k\in \Z^d}\lambda_{\mathbf k}\Xi(z-\mathbf k)\Big)^2\de z\\
		&=N^d\int_{\R^d}\Big(\sum_{\mathbf k\in\Z^d}\lambda_{\mathbf k}\Xi(Nz-\mathbf k)\Big)^2\de z
		= N^d\int_{\R^d}\Big(\sum_{p\in G_N}y_p\,\xi_N^{p}(Nz-p)\Big)^2\de z\\
		&= N^d\,\Big|\sum_{p\in G_N}y_p\,\xi_N^{p}\Big|^2_H.
	\end{align*}
	this proves the first inequality of \eqref{eqn:lambdaEq}. On the other hand, 
	\begin{equation}\label{eqn:struggles}
		\big|{\langle y_p\,\xi_N^{p},y_q\,\xi_N^{q}\rangle}_H\big|
		 \leq \frac{y_p^2+y_q^2}{2}\int_{\R^d}\Xi (Nz-p)\Xi (Nz-q)\de z=\frac{y_p^2+y_q^2}{2N^d}
	\end{equation}
	for $y\in\R^{G_N}$ and $p,q\in G_N$. On top of that, $\max_{i=1,\dots ,d}|p_i-q_i|\leq 1$ is a necessary condition for ${\langle \xi_N^{p},\xi_N^{q}\rangle}_H\neq 0$, due to Condition \ref{condi:Xi} (i).
	Now, the missing upper bound of \eqref{eqn:lambdaEq} is evident from
	\begin{equation*}
		\Big|\sum_{p\in G_N}y_p\,\xi_{N}^p\Big|^2_H
		\leq\sum_{\substack{p,q\in G_N:\\\max_{i}|p_i-q_i|\leq 1}}\frac{y_p^2}{2N^d}
		+\sum_{\substack{p,q\in G_N:\\\max_{i}|p_i-q_i|\leq 1}}\frac{y_q^2}{2N^d}
		=\frac{3^d}{N^d}\sum_{p\in G_N}y_p^2,
	\end{equation*}
	where we first used \eqref{eqn:struggles} and then $\#\{q\in G_N$ $|$  
	$\max_{i=1,\dots ,d}|p_i-q_i|\leq 1\}= 3^d$ for $p\in G_N$.
\end{proof}

We now compare the validity of \eqref{eqn:wmcHeight} for
two different choices, $\Xi$ and $\widetilde\Xi$, assuming that each of them meets Condition \ref{condi:Xi}. 
So, for $p\in G_N$ (as in \eqref{eqn:GEen}) we have
\begin{equation*}
	\xi_N^p(z):=\Xi(Nz-p),\quad z\in\R^d,
\end{equation*}
on the one hand, and
\begin{equation*}
\widetilde \xi_N^p(z):=\widetilde \Xi(Nz-p),\quad z\in\R^d,
\end{equation*}
on the other. This leads to two different height maps,
\begin{equation*}
	\Lambda_Nx(z)=N^{\frac{d}{2}-1}\sum_{i=1}^{k_N}x_i\xi_{N,i}(z),\quad x\in\R^{k_N},\,z\in D,
\end{equation*}
with $\xi_{N,i}:=\xi_{N}^{p_i}$, as well as
\begin{equation*}
	\widetilde \Lambda_Nx(z)=N^{\frac{d}{2}-1}\sum_{i=1}^{k_N} x_i\widetilde \xi_{N,i}(z),\quad x\in\R^{k_N},\,z\in D,
\end{equation*}
with $\widetilde \xi_{N,i}:=\widetilde \xi_{N}^{p_i}$. It turns out, these two resulting height maps are asymptotically equivalent in the sense specified by the subsequent theorem.

\begin{thm}\label{thm:HeightMaps}
	\item Let $m_N$ be a Borel probability measure on $\R^{k_N}$ for $N\in\N$ and $m$ be a Borel probability measure on $H$. Regarding the weak convergence of measures on $H$ it holds
	\begin{equation*}
		m_N\circ\Lambda_N^{-1}\Rightarrow m\quad\textnormal{is equivalent to}\quad  m_N\circ\widetilde\Lambda_N^{-1}\Rightarrow m.
	\end{equation*}
\end{thm}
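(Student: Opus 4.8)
The plan is to combine tightness with a quantitative smallness estimate for $\widetilde\Lambda_N-\Lambda_N$ on the part of the state space that carries the mass. Interchanging the roles of $\Xi$ and $\widetilde\Xi$ makes the two implications symmetric, so it suffices to show: if $m_N\circ\Lambda_N^{-1}\Rightarrow m$, then $\int_{\R^{k_N}}f(\widetilde\Lambda_N x)\,\de m_N(x)\to\int_H f\,\de m$ for all $f\in C_b(H)$. Since the same expression with $\Lambda_N$ in place of $\widetilde\Lambda_N$ already tends to $\int_H f\,\de m$, everything reduces to
\begin{equation*}
	\int_{\R^{k_N}}\big(f(\widetilde\Lambda_N x)-f(\Lambda_N x)\big)\,\de m_N(x)\longrightarrow 0 .
\end{equation*}
Since $H$ is Polish, the convergent family ${(m_N\circ\Lambda_N^{-1})}_{N\in\N}$ is tight: given $\varepsilon>0$ there is a compact $K\subset H$ with $m_N(\Lambda_N^{-1}K)\geq 1-\varepsilon$ for every $N$. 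The heart of the argument is the estimate
\begin{equation*}
	\delta_N(K):=\sup\big\{\,|\widetilde\Lambda_N x-\Lambda_N x|_H:\ x\in\R^{k_N},\ \Lambda_N x\in K\,\big\}\ \longrightarrow\ 0\qquad(N\to\infty).
\end{equation*}
Granting it, pick for $\varepsilon'>0$, by compactness of $K$ and continuity of $f$, some $\rho_0>0$ with $|f(u)-f(v)|\leq\varepsilon'$ whenever $v\in K$ and $|u-v|_H\leq\rho_0$; for $N$ large (so $\delta_N(K)\leq\rho_0$) the integrand above is $\leq\varepsilon'$ on $\{\Lambda_N x\in K\}$ and $\leq 2\|f\|_\infty$ off it, whence $\limsup_N|\int(f\circ\widetilde\Lambda_N-f\circ\Lambda_N)\,\de m_N|\leq\varepsilon'+2\|f\|_\infty\varepsilon$, and letting $\varepsilon',\varepsilon\downarrow 0$ concludes.

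To prove $\delta_N(K)\to 0$, set $\eta:=\Xi-\widetilde\Xi$; by Condition \ref{condi:Xi} it satisfies $\supp\eta\subseteq[-1,1]^d$, $\int_{\R^d}\eta=0$, and $\sum_{\mathbf k\in\Z^d}\eta(\,\cdot\,-\mathbf k)=0$ pointwise. Extend $x\in\R^{k_N}$ to $\bar x\in\R^{\Z^d}$ by $\bar x_{p_i}:=x_i$ and $\bar x_\mathbf{k}:=0$ for $\mathbf k\notin G_N$. Substituting $u=Nz$ gives
\begin{equation*}
	|\widetilde\Lambda_N x-\Lambda_N x|_H^2=N^{-2}\Big|\sum_{\mathbf k\in\Z^d}\bar x_\mathbf{k}\,\eta(\,\cdot\,-\mathbf k)\Big|_{L^2(\R^d)}^2 .
\end{equation*}
Because $\sum_\mathbf{k}\eta(u-\mathbf k)=0$ for every $u$, at each $u$ one may subtract from the (at most $2^d$ non-vanishing) terms a common value $\bar x_{\kappa(u)}$ with $|u-\kappa(u)|_\infty<1$; a Cauchy--Schwarz estimate then yields, with $C_d$ depending only on $d$,
\begin{equation*}
	\Big|\sum_{\mathbf k\in\Z^d}\bar x_\mathbf{k}\,\eta(\,\cdot\,-\mathbf k)\Big|_{L^2(\R^d)}^2\ \leq\ C_d\sum_{|\mathbf k|_\infty\leq 2}\Big(\sum_{p,\,p+\mathbf k\in G_N}(x_p-x_{p+\mathbf k})^2\ +\ \sum_{p\in G_N,\ p+\mathbf k\notin G_N}x_p^2\Big).
\end{equation*}
So $|\widetilde\Lambda_N x-\Lambda_N x|_H^2$ is controlled by $N^{-2}$ times a \emph{bulk} term (discrete differences of $x$) plus a \emph{boundary-layer} term.

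For the bulk term, apply the lower bound of Proposition \ref{prop:oder} to the finite-difference vector $y^{(\mathbf k)}$, $y^{(\mathbf k)}_i:=x_{p_i}-x_{p_i+\mathbf k}$ (read as $0$ where $p_i+\mathbf k\notin G_N$): then $N^{-2}|y^{(\mathbf k)}|_\euc^2\leq c^{-1}|\Lambda_N y^{(\mathbf k)}|_H^2$, and a reindexing shows $\Lambda_N y^{(\mathbf k)}$ equals the translate $\Lambda_N x-(\Lambda_N x)(\,\cdot\,+\tfrac{\mathbf k}{N})$ up to an error supported within $O(1/N)$ of $\partial D$ and of $H$-norm $\leq C N^{-1}\big(\sum_{p\text{ near }\partial G_N}x_p^2\big)^{1/2}$ (use the upper bound of Proposition \ref{prop:oder}, which is insensitive to the domain). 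As $K$ is compact in $L^2(D)$, its members, extended by $0$, are equicontinuous under translations in $L^2(\R^d)$ by Riesz--Kolmogorov, say with modulus $\omega_K$; hence on $\{\Lambda_N x\in K\}$ the bulk contribution is $\leq C'\sum_{|\mathbf k|_\infty\leq 2}\omega_K(|\mathbf k|_\euc/N)^2\to 0$, plus the boundary error handled next. The boundary-layer term is controlled by two facts: first, a routine compactness/dominated-convergence argument shows $\theta_K(\rho):=\sup_{u\in K}\int_{\{z\in D:\,\operatorname{dist}(z,\partial D)<\rho\}}|u|^2\,\de z\to 0$ as $\rho\to 0$; second, a \emph{localized} version of the lower bound in Proposition \ref{prop:oder} — applying the ellipticity of Lemma \ref{lem:ellipt} to a fixed-size cluster of grid points around a given $p\in G_N$ and integrating only over $\tfrac1N p+[-\tfrac2N,\tfrac2N]^d$ — gives $N^{-2}x_p^2\leq C''\int_{\tfrac1N p+[-\tfrac2N,\tfrac2N]^d}|\Lambda_N x|^2\,\de z$. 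Summing over the $O(N^{d-1})$ grid points within $O(1)$ of $\partial G_N$, whose rescaled neighbourhoods lie in a collar $\{z\in D:\operatorname{dist}(z,\partial D)<C_0/N\}$, yields $N^{-2}\sum_{p\text{ near }\partial G_N}x_p^2\leq C_0'\,\theta_K(C_0/N)\to 0$; this also bounds the error left over from the bulk step. (Along the way $N^{-2}|x|_\euc^2\leq c^{-1}|\Lambda_N x|_H^2\leq c^{-1}\sup_{u\in K}|u|_H^2$ on $\{\Lambda_N x\in K\}$, again Proposition \ref{prop:oder}, absorbs any crude remainders.) Combining the bulk and boundary-layer estimates gives $\delta_N(K)\to 0$.

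The step I expect to be the main obstacle is precisely this boundary-layer control: establishing the localized lower bound $N^{-2}x_p^2\lesssim\int_{\frac1N p+[-\frac2N,\frac2N]^d}|\Lambda_N x|^2\,\de z$ for grid points near $\partial D$ and matching it cleanly with the uniform smallness of the near-boundary $L^2$-mass over $K$, given that $D$ is merely a bounded open domain and the grids $G_N=ND_N\cap\Z^d$ approach $\partial D$ in an a priori uncontrolled way. Everything else is soft: the symmetry reduction, tightness, the compactness argument for $\theta_K$ and the restricted uniform continuity of $f$, the elementary consequences of Condition \ref{condi:Xi} for $\eta=\Xi-\widetilde\Xi$, and direct invocations of Proposition \ref{prop:oder} and Lemma \ref{lem:ellipt}.
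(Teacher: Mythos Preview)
Your reduction to proving $\delta_N(K)\to 0$ is clean, and that uniform estimate is in fact true. But your proof of it has a genuine gap at exactly the point you flagged. The ``localized lower bound'' $N^{-2}x_p^2\leq C''\int_{\frac1N p+[-\frac2N,\frac2N]^d}|\Lambda_N x|^2\,\de z$ does \emph{not} follow from Lemma~\ref{lem:ellipt}: applying that lemma to $x$ restricted to a cluster bounds $\int_{\R^d}\big|\sum_{q\in\text{cluster}}x_q\Xi(\cdot-q)\big|^2$ from below, which is a different quantity from $\int_{\text{patch}}\big|\sum_{q\in G_N}x_q\Xi(\cdot-q)\big|^2$, because the values $x_q$ for $q$ just outside the cluster still contribute to $\Lambda_Nx$ on the patch and may cancel $x_p$. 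A concrete counterexample within Condition~\ref{condi:Xi}: for $d=1$ take $\Xi=\alpha\,\eins_{[-1,0)}+(1-\alpha)\,\eins_{[0,1)}$ with $\alpha\in(0,1)\setminus\{\tfrac12\}$; then $\sum_q x_q\Xi(u-q)=x_j(1-\alpha)+x_{j+1}\alpha$ on $[j,j+1)$, and the choice $x_{p+j}=\big(-\tfrac{1-\alpha}{\alpha}\big)^j$ makes this vanish on any finite patch around $p$ while $x_p=1$. So the localized bound is false for interior $p$, and your appeal to Lemma~\ref{lem:ellipt} is a non sequitur. (A secondary issue: your collar is taken near $\partial D$, but the grid boundary lies near $\overline D\setminus D_N$, which under the bare hypotheses on $(D_N)_N$ need not be within $O(1/N)$ of $\partial D$.)

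The paper takes a different, softer route that sidesteps the boundary entirely. It never attempts $\delta_N(K)\to 0$. For tightness it shows that $\bigcup_N\widetilde\Lambda_N\circ\Lambda_N^{-1}(K_1)$ is totally bounded for every compact $K_1$, using only the uniform operator bound of Proposition~\ref{prop:oder} together with the approximation $\sum_p\varphi(\tfrac pN)\xi_N^p\to\varphi$ in $H$ for $\varphi\in C(\overline D)$. For the identification of the limit it tests only against cylinder functions $F(h)=g(\langle f_1,h\rangle,\dots,\langle f_q,h\rangle)$ with each $f_k$ continuous and \emph{compactly supported in $D$}; for such $f_k$ one has $\langle f_k,\Lambda_N x\rangle\approx N^{-\frac d2-1}\sum_i f_k(p_i/N)x_i\approx\langle f_k,\widetilde\Lambda_N x\rangle$ with error controlled solely by the modulus of continuity of $f_k$ and by $|\Lambda_N x|_H$, so no boundary layer appears. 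In other words, the paper trades your hard $H$-norm control near the grid boundary for a weak-convergence argument that only sees the interior. (Once the paper's total-boundedness step and interior pairing estimate are in hand, your $\delta_N(K)\to 0$ actually follows a posteriori by a compactness--subsequence argument---but not via the localized ellipticity you invoke.)
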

\begin{proof}
	At first, we prove the convergence
	\begin{equation}\label{eqn:tightOne:6}
		\lim_{N\to\infty}\int_D\Big|\sum_{p\in G_N}\varphi\big(\frac{p}{N}\big)\xi_N^p(z)-\varphi(z)\Big|^2\de z=0
	\end{equation}
	for $\varphi\in C(\overline D)$.
	To verify \eqref{eqn:tightOne:6}, consider that
	\begin{align*}
	\supp{\Xi(N\,\cdot\,-\mathbf k)}&\subseteq \frac{1}{N}\Big([\mathbf k_1-1,\mathbf k_1+1]\times\dots\times [\mathbf k_d-1,\mathbf k_d+1]\Big)\\
	&\subseteq \Big\{z\in\R^d\,\Big|\,\big|z-\tfrac{\mathbf k}{N}\big|_\euc\leq{\tfrac{\sqrt d}{N}}\Big\}
	\end{align*}
	for $\mathbf k\in\Z^d$. So, with \eqref{eqn:GEen} and Condition \ref{condi:Xi} (iii), we have
	\begin{equation*}
		\limsup_{N\to\infty}\int_{D}\Big|\sum_{\mathbf k\in \Z^d\setminus G_N}\Xi(Nz-\mathbf k)\Big|^2\de z\leq\limsup_{N\to\infty}\de z\big(D\cap B^\textnormal{ euc}_{{\sqrt d/N}}(\overline D\setminus D_N)\big)=0,
	\end{equation*}
	where $B^\textnormal{ euc}_{{\sqrt d/N}}(\overline D\setminus D_N):=\{z'\in\R^d$ $|$ $|z'-z|_\euc<{\sqrt d/N}$ for some $z\in \overline D\setminus D_N\}$.
	Then, the estimate
	\begin{align*}
		&\Big(\int_D\Big|\sum_{p\in G_N}\varphi\big(\frac{p}{N}\big)\xi_N^p(z)-\varphi(z)\Big|^2\de z\Big)^{\frac{1}{2}}\\&=
		\Big(\int_D\Big|\sum_{p\in G_N}\varphi\big(\frac{p}{N}\big)\xi_N^p(z)-\varphi(z)\sum_{\mathbf k\in\Z^d}\Xi(Nz-\mathbf k)\Big|^2\de z\Big)^{\frac{1}{2}}
		\\&\leq \Big(\int_D\Big|\sum_{p\in G_N}\Big(\varphi\big(\frac{p}{N}\big)-\varphi(z)\Big)\xi_N^p(z)\Big|^2\de z\Big)^{\frac{1}{2}}+\sup_{z\in \overline D}|\varphi(z)|\Big(\int_D\Big|\sum_{\mathbf k\in\Z^d\setminus G_N}\Xi(Nz-\mathbf k)\Big|^2\de z\Big)^{\frac{1}{2}}
	\end{align*}
	leads to the desired equality in \eqref{eqn:tightOne:6}, because
	\begin{multline*}
		\Big(\int_D\Big|\sum_{p\in G_N}\Big(\varphi\big(\frac{p}{N}\big)-\varphi(z)\Big)\xi_N^p(z)\Big|^2\de z\Big)^{\frac{1}{2}}\\\leq
		\sup_{\substack{z,z'\in\overline D\\|z-z'|_\euc\leq {\sqrt d/N}}}|\varphi(z)-\varphi(z')|
		\Big(\int_D\sum_{p\in G_N}\xi_N^p(z)^2\de z\Big)^{\frac{1}{2}}\\\leq
		\sqrt{\de z(D)}\sup_{\substack{z,z'\in\overline D\\|z-z'|_\euc\leq {\sqrt d/N}}}|\varphi(z)-\varphi(z')|.
	\end{multline*}
	In the last inequality, $\xi_N^p(z)^2\leq \xi_N^p(z)$ and $\sum_{p\in G_N}\xi_N^P(z)\leq 1$ is used.
    Now, since \eqref{eqn:tightOne:6} is proven, 
    \begin{equation}\label{eqn:tightOne:tilde}
	\lim_{N\to\infty}\int_D\Big|\sum_{p\in G_N}\varphi\big(\frac{p}{N}\big)\widetilde\xi_N^p(z)-\varphi(z)\Big|^2\de z=0
    \end{equation}
	holds likewise, of course.
	
	The rest of this proof is dedicated to show
	\begin{equation*}
	m_N\circ\Lambda_N^{-1}\Rightarrow m\quad\textnormal{implies}\quad  m_N\circ\widetilde\Lambda_N^{-1}\Rightarrow m
    \end{equation*}
    regarding weak measure convergence on $H$.
    If so, then the other direction holds automatically.
    So, we assume $m_N\circ\Lambda_N^{-1}\Rightarrow m$ from here on. First, the tightness of ${(m_N\circ\widetilde\Lambda_N^{-1})}_{N\in\N}$ is shown.
    For this purpose,  our first claim is that for each compact set $K_1\subset H$ there exists a compact set $K_2\subset H$ such that
	\begin{equation}\label{eqn:tightOne:4}
		\bigcup_{N\in\N}\widetilde\Lambda_N\circ\Lambda_N^{-1}(K_1)\subseteq K_2.
	\end{equation}
	To prove \eqref{eqn:tightOne:4}, we show that the set on the left-hand side is totally bounded. 
	It follows from Proposition \ref{prop:oder} that $\widetilde\Lambda_N\circ\Lambda_N^{-1}$ is a continuous linear operator from $\im(\Lambda_N)$ to $\im(\widetilde\Lambda_N)$ for $N\in\N$ with
	\begin{equation}\label{eqn:tightOne:opnormEst}
		\big|\widetilde\Lambda_N\circ\Lambda_N^{-1}h\big|_H\leq 3^{\frac{d}{2}}c^{-\frac{1}{2}}\,|h|_H,\quad h\in\im(\Lambda_N),
	\end{equation}
	where $c:=$ $2\int_{\R^d}\Xi(z)^2\de z- 1$ $\in(0,\infty)$ is the constant from that proposition.
	In the following, the open $\delta$-ball $\{h'\in H$ $|$ $|h-h'|_H<\delta\}$ 
	with centre $h\in H$  is denoted by $B(h,\delta)$ for $\delta\in(0,\infty)$.
	
	 Let $\varepsilon>0$ be fixed.
	Since $K_1$ is totally bounded and $C(\overline D)$ is dense in $H$, there exist $M\in\N$ and $\varphi_1,\dots,\varphi_M\in C(\overline D)$ such that
	\begin{equation}\label{eqn:K1}
		K_1\subseteq \bigcup_{l=1}^M  B(\varphi_l,3^{-\frac{d}{2}-1}c^{\frac{1}{2}}\varepsilon).
	\end{equation}
	Due to \eqref{eqn:tightOne:6} and \eqref{eqn:tightOne:tilde}, there exists $N_\varepsilon\in\N$ and $x_{N,l}\in\R^{k_N}$ for $l=1,\dots,M$, $N> N_\varepsilon$, such that
	\begin{equation*}
		\widetilde\Lambda_Nx_{N,l}\in B(\varphi_l,\varepsilon/3)\quad\textnormal{and}\quad\Lambda_Nx_{N,l}\in B(\varphi_l,3^{-\frac{d}{2}-1}c^{\frac{1}{2}}\varepsilon),\quad l=1,\dots,M,\,N> N_\varepsilon.
	\end{equation*}
	For each $N\in\N$ the set $\widetilde\Lambda_N\circ\Lambda_N^{-1}(K_1)$ is a compact and therefore there exists $k\in\N$ and $\eta_1,\dots,\eta_k\in C(\overline D)$ such that
	\begin{equation*}
		\bigcup_{N=1}^{N_\varepsilon}\widetilde\Lambda_N\circ\Lambda_N^{-1}(K_1)\subseteq\bigcup_{l=1}^k B(\eta_l,\varepsilon).
	\end{equation*}
	Next, we show 
	\begin{equation}\label{eqn:neps+1}
		\bigcup_{N= N_\varepsilon+1}^\infty\widetilde\Lambda_N\circ\Lambda_N^{-1}(K_1)\subseteq \bigcup_{l=1}^M  
		B(\varphi_l,\varepsilon),
	\end{equation}
	which would prove the total boundedness of $\bigcup_{N\in\N}\widetilde\Lambda_N\circ\Lambda_N^{-1}(K_1)$, as the choice  of $\varepsilon$ is arbitrary.
	To prove \eqref{eqn:neps+1}, we assume $h\in \widetilde\Lambda_N\circ\Lambda_N^{-1}(K_1)$ for some $N>N_\varepsilon$. Using \eqref{eqn:K1},
	there exists $l\in\{1,\dots,M\}$ and $y\in \Lambda_N^{-1}(B(\varphi_l,3^{-\frac{d}{2}-1}c^{\frac{1}{2}}\varepsilon))$
	with $\widetilde\Lambda_Ny=h$.
    Now, \eqref{eqn:tightOne:opnormEst} and the choices of $x_{N,l}$ yield
	\begin{align*}
		|\widetilde\Lambda_Ny-\varphi_l|_H&=\big|\widetilde\Lambda_N\circ\Lambda_N^{-1}(\Lambda_Ny-\Lambda_Nx_{N,l})+\widetilde\Lambda_Nx_{N,l}-\varphi_l\big|_H
		\\&\leq3^{\frac{d}{2}}c^{-\frac{1}{2}}\,\big|\Lambda_Ny-\Lambda_Nx_{N,l}\big|_H+\big|\widetilde\Lambda_Nx_{N,l}-\varphi_l\big|_H
		\\&\leq3^{\frac{d}{2}}c^{-\frac{1}{2}}\Big(\big|\Lambda_Ny-\varphi_l\big|_H
		+\big|\varphi_l-\Lambda_Nx_{N,l}\big|_H
		\Big)+\big|\widetilde\Lambda_Nx_{N,l}-\varphi_l\big|_H
		\\&\leq3^{\frac{d}{2}}c^{-\frac{1}{2}}\big(3^{-\frac{d}{2}-1}c^{\frac{1}{2}}\varepsilon+3^{-\frac{d}{2}-1}c^{\frac{1}{2}}\varepsilon\big)+\varepsilon/3=\varepsilon.
	\end{align*}
	This proves \eqref{eqn:neps+1}. The proof for the existence of a compact set $K_2\subset H$ satisfying \eqref{eqn:tightOne:4} for a fixed compact set $K_1$ is complete.

	Now, the tightness of ${(m_N\circ\widetilde\Lambda_N^{-1})}_N$ is an immediate consequence of the tightness of ${(m_N\circ\Lambda_N^{-1})}_N$. Indeed, for $0<\delta<1$ and a compact set $K_\delta\subset H$ such that
	\begin{equation*}
		\inf_{N\in\N}m_N(\Lambda_N^{-1}(K_\delta))\geq 1-\delta,
	\end{equation*}
	we can choose a compact set $K_\delta'\subset H$ with $\Lambda_N^{-1}(K_\delta)\subseteq\widetilde\Lambda_N^{-1} (K_\delta')$ for all $N\in\N$ due to \eqref{eqn:tightOne:4}. In particular,
	\begin{equation*}
		\inf_{N\in\N}m_N\circ\widetilde\Lambda_N^{-1}( K_\delta')
		\geq\inf_{N\in\N}m_N\circ\Lambda_N^{-1}(K_\delta)\geq 1-\delta.
	\end{equation*}
	
	Identifying the accumulation points of ${(m_N\circ\widetilde\Lambda_N^{-1})}_N$ is the only thing left to do in this proof. We make use of \cite[Theorem 4.5 of Chapter 3]{kurtz86}.
	It is sufficient to show
	\begin{equation}\label{eqn:tightOne:7}
		\lim_{N\to\infty}\int_{\R^{k_N}}F(\widetilde\Lambda_Nx)\de m_N(x)=
		\int_HF(h)\de m(h)
	\end{equation}
	 for any $F:H\to\R$ of the form $F(h)= g(\langle f_1,h\rangle_H,\dots,\langle f_q,h\rangle_H)$, where $q\in\N$, $g:\R^m\to\R$ is a bounded, Lipschitz continuous function   and $f_1,\dots,f_q:$ $D\to\R$ are continuous with compact support contained in $D$. The reason is that the family of all functions $F:H\to\R$ of this form separates the points of $H$.
	Let $F$, $g$ and $f_1,\dots,f_q$, $q\in\N$, be chosen in this manner. We have
	\begin{multline*}
		\limsup_{N\to\infty}\Big|\int_{\R^{k_N}}F(\widetilde\Lambda_Nx)\de m_N(x)
		-\int_HF(h)\de m(h)\Big|
		\\\leq
		\limsup_{N\to\infty}\Big|\int_{\R^{k_N}}F(\widetilde\Lambda_Nx)\de m_N(x)
		-\int_{\R^{k_N}}F(\Lambda_Nx)\de m_N(x)\Big|\\
		+\limsup_{N\to\infty}\Big|\int_{\R^{k_N}}F(\Lambda_Nx)\de m_N(x)
		-\int_HF(h)\de m(h)\Big|.
	\end{multline*}
	Due to the weak measure convergence $m_N\circ\Lambda_N^{-1}\Rightarrow m$, the second term of the right-hand side vanishes. Now, we address the first summand. Let $\varepsilon>0$. 
	There exists a compact set  $K_\varepsilon\subset H$ with 
	\begin{equation*}
		m_N\circ\Lambda_N^{-1}(K_\varepsilon)\geq 1-
		\frac{\varepsilon}{4\sup_{h\in h}|F(h)|}
	\end{equation*}
	for $N\in\N$. Let $L$ denote the Lipschitz constant of $g$.
	We have
	\begin{align*}
		&\Big|\int_{\R^{k_N}}F(\widetilde\Lambda_Nx)\de m_N(x)
		-\int_{\R^{k_N}}F(\Lambda_Nx)\de m_N(x)\Big|\\&\leq
		\int_{\Lambda_N^{-1}(K_\varepsilon)}\big|F(\widetilde\Lambda_Nx)
		-F(\Lambda_Nx)\big|\de m_N(x)+2\|F\|_\infty\int_{\R^{k_N}\setminus\Lambda_N^{-1}(K_\varepsilon) }\de m_N
		\\&\leq
		L\int_{\Lambda_N^{-1}(K_\varepsilon)}\Big(\sum_{k=1}^q\big|\langle f_k,\widetilde\Lambda_Nx-\Lambda_Nx\rangle_H\big|^2\Big)^{1/2}\de m_N(x)+\frac{\varepsilon}{2}.
    \end{align*}
    So, \eqref{eqn:tightOne:7} follows, if there exists $M_\varepsilon\in\N$ such that
    \begin{equation}\label{eqn:tightOne:7,5}
    \big|\langle f_k,\widetilde\Lambda_Nx-\Lambda_Nx\rangle_H\big|\leq\frac{\varepsilon}{2L\sqrt{q}}\quad\text{for }x\in \Lambda_N^{-1}(K_\varepsilon),\, k=1,\dots,q\text{ and }N\geq M_\varepsilon.
    \end{equation}
    
    We define
    \begin{equation*}
    	\omega_{N}(z):=\max_{k=1,\dots, q}\sup_{\substack{z'\in\overline D\\|z-z'|_\euc\leq \sqrt d/N}}|f_k(z')-f_k(z)|, \quad z\in\overline D.
    \end{equation*}
    By virtue of \eqref{eqn:tightOne:4} there exists a compact set $K'_\varepsilon$
    such that 
    \begin{equation}\label{eqn:contain}
    	\bigcup_{N\in\N}\widetilde\Lambda_N\circ\Lambda_N^{-1}(K_\varepsilon)\subseteq K'_\varepsilon.
    \end{equation}
    Then, we choose $M_\varepsilon^{(1)},M_\varepsilon^{(2)}\in\N$ such that
    \begin{equation}\label{eqn:2ndLast}
     \omega_{M_\varepsilon^{(1)}}(z)\leq\varepsilon\big(4L \max_{h\in K_\varepsilon\cup  K'_\varepsilon}|h|_H\big)^{-1}\Big(q\int_D\de z\Big)^{-\frac{1}{2}},\quad z\in D
    \end{equation}
    and
    \begin{equation*}
    	\bigcup_{k=1}^q\Big\{z\in\R^d\,\Big|\,|z-z'|_\euc\leq{\sqrt d/M_\varepsilon^{(2)}}\text{ for some }z'\in\supp {f_k}\Big\}
    	\subset D.
    \end{equation*}
    We set $M_\varepsilon:=\max(\{M_\varepsilon^{(1)},M_\varepsilon^{(2)}\})$.
    Let $k\in\{1,\dots,q\}$, $x\in\Lambda_N^{-1}(K_\varepsilon)$ and $N\geq M_\varepsilon$.
    \begin{multline}\label{eqn:sunday}
    	\big|\langle f_k,\widetilde\Lambda_Nx-\Lambda_Nx\rangle_H\big|\\
    	\leq \Big|\langle f_k,\widetilde\Lambda_Nx\rangle_H -N^{-\frac{d}{2}-1} \sum_{i=1}^{k_N}f_k\big(\frac{p_i}{N}\big)x_{i}\Big|
    	+\Big|N^{-\frac{d}{2}-1} \sum_{i=1}^{k_N}f_k\big(\frac{p_i}{N}\big)x_{i}-\langle f_k,\Lambda_Nx\rangle_H\Big|.
    \end{multline}
    Next, since $\int_{\R^d}\widetilde\xi_{N,i}\de z=N^{-d}$ and
    \begin{equation*}
    \supp{\widetilde\xi_{N,i}}\subseteq\Big\{z\in\R^d\,\Big|\,\big|z-\frac{p_i}{N}\big|_\euc\leq{\sqrt d/M_\varepsilon}
    \Big\}\subset D 
    \end{equation*}
    for $i\in\{1,\dots,k_N\}$ with $p_i/N\in\supp{f_k}$,  it holds
    \begin{multline*}
    	\Big|\langle f_k,\widetilde\Lambda_Nx\rangle_H -N^{-\frac{d}{2}-1} \sum_{i=1}^{k_N}f_k\big(\frac{p_i}{N}\big)x_{i}\Big|
    	=\Big|\langle f_k,\widetilde\Lambda_Nx\rangle_H -N^{\frac{d}{2}-1} \sum_{i=1}^{k_N}f_k\big(\frac{p_i}{N}\big)x_{i}\int_D\widetilde\xi_{N,i}(z)\de z\Big|\\
    	= \Big|N^{\frac{d}{2}-1}\sum_{i=1}^{k_N}\int_D \Big(f_k(z)-f_k\big(\frac{p_i}{N}\big)\Big)\,x_i\widetilde\xi_{N,i}(z)\de z\Big|
    	\leq \int_D \omega_N(z)\big|\widetilde\Lambda_Nx(z)\big|\de z\\
    	\leq \varepsilon \big(4L\max_{h\in K_\varepsilon\cup  K'_\varepsilon}|h|_H\big)^{-1}\Big(q\int_D\de z\Big)^{-\frac{1}{2}}\int_D \big|\widetilde\Lambda_Nx(z)\big|\de z
    	\leq \frac{\varepsilon}{4L\sqrt q}.
    \end{multline*}
    In the second to last inequality, we used \eqref{eqn:2ndLast}, while the last inequality follows from \eqref{eqn:contain}, $x\in \Lambda_N^{-1}(K_\varepsilon)$ and H\"older's inequality 
    $\int_D \big|\widetilde\Lambda_Nx(z)\big|\de z\leq|\widetilde\Lambda_Nx(z)|_H |\eins(\cdot)|_H$.
    With the analogous argumentation
    \begin{equation*}
    	\Big|\langle f_k,\Lambda_Nx\rangle_H -N^{-\frac{d}{2}-1} \sum_{i=1}^{k_N}f_k\big(\frac{p_i}{N}\big)x_{i}\Big|
    	\leq \Big(\int_D w_N(z)^2\de z\Big)^{\frac{1}{2}}\max_{h\in K_\varepsilon}|h|_H\leq \frac{\varepsilon}{4L\sqrt q}.
    \end{equation*}
    Hence \eqref{eqn:tightOne:7,5} holds and thereby \eqref{eqn:tightOne:7}.
     This concludes the proof. 
\end{proof}

\begin{exa}\label{exa:ZambCip}
	\begin{thm(ii)}
		\item 
		We consider the case of Example \ref{exa:wettingM}.
		and choose $\Xi=\eins_{[-1,0)}$ instead of the one-dimensional tent function and re-define $\Lambda_N$ according to \eqref{eqn:heightDef2} for $N\in\N$, i.e.
		\begin{equation*}
			\Lambda_Nx(z):=\frac{1}{\sqrt N}\sum_{i=1}^{N}x_{i}\eins_{[\frac{i-1}{N},\frac{i}{N})}(z),\quad x\in\R^N,\,
			z\in(0,1).
		\end{equation*}
		Still, regarding weak measure convergence on $H$, it holds
		 \begin{equation*}
		 	m_N^\beta\circ\Lambda_N^{-1}\Rightarrow m^\beta,
		 \end{equation*}
		with $m_N^\beta$, $m^\beta$ exactly as in Example \ref{exa:wettingM}, because of Theorem \ref{thm:HeightMaps}.
		\item  In Example \ref{exa:sfPoly}, we can pick the alternative choice $\Xi=\eins_{[-\frac{1}{2},\frac{1}{2})^d}$ instead of the two-dimensional tent function and re-define $\Lambda_N$ according to \eqref{eqn:heightDef2} for $N\in\N$, $N\geq 6$, i.e.
		\begin{gather*}
			\Lambda_Nx(z):=\sum_{i=1}^{k_N}x_i\eins_{[-\frac{1}{2},\frac{1}{2})^d}(Nz-I_N(i)) ,\quad x\in\R^{k_N},\,z\in (0,1)^2,\\
			\text{where}\quad G_N=:(2,N-2)^2\cap\Z^2, \quad k_N:=(N-5)^2,
		\end{gather*}
		for some bijection $I_N:\{1,\dots, k_N\}$ $\to$ $G_N$.
		By Theorem \ref{thm:HeightMaps}, it holds
		\begin{equation*}
			m^\alpha_N\circ\Lambda_N^{-1}\Rightarrow m^\alpha
		\end{equation*}
		regarding weak measure convergence on $H$,
		where $m^\alpha_N$, $m^\alpha$ are exactly as in Example \ref{exa:sfPoly}.
	\end{thm(ii)}
\end{exa}

	\section{The dynamic model}\label{sec:tightness}
	\subsection{Tightness of equilibrium laws}
We continue to work in the setting of Section \ref{sec:hvar}, 
with $(H,\langle\,\cdot\,,\,\cdot\,\rangle)=L^2(D,\de z)$, $\Lambda_N$, $\Xi$, $D_N$, $G_N$, $k_N$ and $p_1,\dots p_{k_N}\in G_N$ as in the previous section. 
In this section, a dynamic version of Theorem \ref{thm:HeightMaps} is derived.
By default, when referring to $C([0,\infty),Y)$ as a topological space for a metric space $Y$, we address the topology corresponding to the locally uniform convergence of functions $[0,\infty)\to Y$.

We make use of the theory of Dirichlet forms for symmetric Markov processes. 
The basic terminology and notions of the standard textbooks \cite{fukushima11,bouleau91,ma92} are used.
From here on, we have the following additional assumptions.
\begin{condition} \label{condi:tighti}
	\begin{thm(ii)}
		\item Let $H_0$ be a separable Hilbert space, densely included in $H$, such that the inclusion $i:H_0\hookrightarrow H$ has a finite Hilbert Schmidt norm
		$\|i\|_{\textnormal{HS}}$. Identifying $H$ with its dual $H'$ we can interpret $H$ as a subset of the dual $H_0'$ via
	\begin{equation*}
		H_0\subset H=H'\subset H_0'.
	\end{equation*}
	\item For each $N\in\N$ let $m_N$ be a Borel probability measure on $\R^{k_N}$  and
	$(\E^N,\dom(\E^N))$ be a symmetric, regular, local and conservative Dirichlet form on $L^2(\R^{k_N},m_N)$, which admits a Carré-du-Champs operator $$\Gamma_N:\dom(\E^N)\times\dom(\E^N)\to L^1(\R^{k_N},m_N).$$
	Moreover, each linear functional $\R^{k_N}\ni x\mapsto a^\textnormal{T}x\in\R$, $a\in\R^{k_N}$, 
	is  a representative of an element $l_a\in\dom(\E^N)$ with
	\begin{equation}\label{eqn:Gamma}
		\Gamma_N(l_a,l_a)(\cdot)\leq  \gamma \,a^\textnormal{T}a\quad m_N\text{-a.e.} ,
	\end{equation}
	where the constant $\gamma\in(0,\infty)$ does not depend on $N$.
	\end{thm(ii)}
\end{condition}

For each $N\in\N$ let 
$$\mathbf M_N=\big(\Omega_N,\mathcal F^N,{(\mathcal F^N_t)}_{t\geq 0},(X^N_t)_{t\geq 0},(P^N_x)_{x\in\R^{k_N}\cup \{\Delta\}} \big)$$ 
denote a special standard process with state space $\R^{k_N}$ and life time $\zeta_N$, which is properly associated with $(\E^N,\dom(\E^N))$.
Such a process exists, because $(\E^N,\dom(\E^N))$ is regular. Then, $\mathbf M_N$ is unique up to equivalence.
It holds
\begin{equation*}
	P^N_x(\{\omega\in\Omega_N\,|\,[0,\zeta_N(\omega))\ni t\mapsto X^N_t(\omega)\text{ is continuous}\})=1\quad\text{for }x\in\R^{k_N},
\end{equation*}
due to the local property of $\E^N$.
The strongly continuous symmetric contraction semigroup on $L^2(\R^{k_N},m_N)$ corresponding to \sloppy $(\E^N,\dom(\E^N))$ is denoted by ${(T^N_t)}_{t\geq 0}$. 
\begin{remark}\label{rem:weed} 
	Let $N\in\N$.
	By conservativeness, it holds $T^N_t\eins_{\R^{k_N}},=\eins_{\R^{k_N}}$ for $t\geq 0$. This means there exists a set $\mathcal N\subset \R^{k_N}$ of zero capacity (referring to the $1$-capacity associated with $\E^N$) such that  $P^N_x(\{\zeta_N=\infty\})=1$ for $x\in\R^{k_N}\setminus \mathcal N$. W.l.o.g.~$\R^{k_N}\setminus \mathcal N$ may be assumed to be $\mathbf M_N$-invariant, as argued in  \cite[Chap.~IV, Cor.~6.5]{ma92}. Considering the restriction $\mathbf M_N|_{\R^{k_N}\setminus \mathcal N}$ (as defined in \cite[Chap.~IV, Rem.~6.2(i)]{ma92}) 
	and then applying 
	the procedure described in \cite[Chapt.IV, Sect.3, pp.~117f.]{ma92}, re-defining $\mathbf M_N$ in such way that each element from $\mathcal N$ is a trap,  we may assume $P_x^N(\{\zeta_N=\infty\})=1$ for all $x\in\R^{k_N}$. 
	Furthermore, after the procedure of weeding (restricting the sample space to a subset of $\Omega_N$ as explained in \cite[Chap.~III, Paragraph 2, pp.~86f.]{dynkin2012markov}), we may w.l.o.g.~assume that $\mathbf M_N$ is non-terminating and continuous, i.e.~$\zeta_N(\omega)=\infty$ and $[0,\infty)\ni t\mapsto X^N_t(\omega)\in \R^{k_N}$ is continuous for every $\omega\in\Omega_N$.
\end{remark}

As a conclusion of Remark \ref{rem:weed}, the following corollary holds. 

\begin{corollary}\label{cor:Mexist}
	There exists a diffusion process, i.e.~a non-terminating, continuous special standard process $$\mathbf M_N=\big(\Omega_N,\mathcal F^N,{(\mathcal F^N_t)}_{t\geq 0},(X^N_t)_{t\geq 0},(P^N_x)_{x\in\R^{k_N}} \big)$$ on $\R^{k_N}$, 
	which is properly associated with $\E^N$. So, for every   $u\in L^\infty(\R^{k_N},m_N)$, the function  
	$$\R^{k_N}\ni x\mapsto \int_{\Omega_N}u\circ X^N_t\de P^N_x$$ 
	is a (quasi-continuous) version of $T^N_t u$ for $t\geq 0$. In particular, the measure $m_N$ is an invariant measure and  $\mathbf M_N$ is $m_N$-symmetric.
\end{corollary}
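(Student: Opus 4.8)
The plan is to assemble Corollary~\ref{cor:Mexist} directly from Remark~\ref{rem:weed} together with the general theory of symmetric Dirichlet forms; no new estimate is needed, only careful bookkeeping of exceptional sets. First I would recall the existence theorem (e.g.\ \cite[Thm.~7.2.1]{fukushima11}, or \cite[Chap.~IV]{ma92}): since $\E^N$ is a regular symmetric Dirichlet form on $L^2(\R^{k_N},m_N)$, there is a special standard process $\mathbf M_N$ properly associated with it, unique up to equivalence, and by the local property of $\E^N$ its sample paths are $P^N_x$-a.s.\ continuous on $[0,\zeta_N)$ for quasi-every $x$. Conservativeness yields $T^N_t\eins=\eins$ for all $t\geq 0$, hence $P^N_x(\zeta_N=\infty)=1$ off a properly exceptional set $\mathcal N$. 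Following \cite[Chap.~IV, Cor.~6.5 and Sect.~3]{ma92}, I would pass to the restriction of $\mathbf M_N$ to the $\mathbf M_N$-invariant set $\R^{k_N}\setminus\mathcal N$ and then turn each point of $\mathcal N$ into a trap, so that $P^N_x(\zeta_N=\infty)=1$ holds for \emph{all} $x$ while the special-standard and strong Markov structure is preserved; finally the weeding procedure of \cite[Chap.~III]{dynkin2012markov} removes the residual null set of irregular paths from $\Omega_N$, leaving a genuinely non-terminating, pathwise-continuous diffusion --- exactly the process claimed.

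It then remains to turn ``proper association'' into the stated semigroup identity for every $u\in L^\infty(\R^{k_N},m_N)$. For $u\in L^2\cap L^\infty$ proper association already gives that $x\mapsto\int_{\Omega_N}u\circ X^N_t\,\de P^N_x$ is a quasi-continuous $m_N$-version of $T^N_tu$; I would extend this to arbitrary $u\in L^\infty$ by truncation together with a monotone-class argument, using that $m_N$ is a probability measure (so constants and truncations remain in $L^2$) and that both sides are contractions on $L^\infty$ in the variable $u$. Invariance of $m_N$ is then immediate from symmetry and conservativeness, namely $\int_{\R^{k_N}}T^N_tu\,\de m_N=\int_{\R^{k_N}}u\,T^N_t\eins\,\de m_N=\int_{\R^{k_N}}u\,\de m_N$, and the $m_N$-symmetry of $\mathbf M_N$ is built into the notion of being associated with the symmetric form $\E^N$.

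The only genuinely delicate point, and hence the main obstacle, is purely technical: ensuring that the three successive modifications of $\mathbf M_N$ (restriction to the invariant set, trapping the points of $\mathcal N$, and weeding) are mutually compatible and destroy neither the special standard property, nor the proper association, nor the $m_N$-symmetry. This is precisely what one imports from \cite{ma92,dynkin2012markov}; I would cite those references rather than reprove the claim, and everything else is routine.
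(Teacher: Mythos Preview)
Your proposal is correct and follows essentially the same route as the paper, which presents the corollary as a direct conclusion of Remark~\ref{rem:weed} without a separate proof; you have simply unpacked that remark (existence from regularity, continuity from locality, non-termination from conservativeness, followed by the restriction/trapping/weeding modifications citing \cite{ma92,dynkin2012markov}) and appended the routine semigroup and invariance statements. One small redundancy: since $m_N$ is a probability measure you already have $L^\infty(\R^{k_N},m_N)\subset L^2(\R^{k_N},m_N)$, so the truncation/monotone-class extension from $L^2\cap L^\infty$ to $L^\infty$ is unnecessary.
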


	We analyse the distribution of 
	${(u^{N}_t)}_{t\geq 0}:\Omega_N\to C([0,\infty), H)$ defined by
	\begin{equation}\label{eqn:dynLambda}
		u^N_t:=\Lambda_N X^{N}_{N^2t},\quad t\geq 0,
	\end{equation}
	for $N\in\N$. We are interested in the weak measure convergence 
	of the equilibrium law under ${(u^{N}_t)}_{t\geq 0}$ as $N\to\infty$. 
	A suitable path space to tackle this problem is $C([0,\infty), H_0')$.
	Denoting the $i$-th coordinate process of ${(X^N_t)}_{t\geq 0}$ with ${(X^{N,i}_t)}_{t\geq 0}$ for $i=1,\dots k_N$,
	we have
	\begin{equation}\label{eqn:dynXi}
		u^N_t=N^{\frac{d}{2}-1}\sum_{i=1}^{k_N}X^{N,i}_{N^2t}\,\Xi(N\,\cdot\,- p_i),\quad t\geq 0.
	\end{equation}
	Let
	\begin{equation*}
		P_N(A):=\int_{\R^{k_N}}P^N_x(A)\de m_N(x),\quad A\in\mathcal F^N.
	\end{equation*}
	The equilibrium law of ${(u^{N}_t)}_{t\geq 0}$ on $C([0,\infty), H_0')$ is denoted by $\mathbb P_N$, i.e.
	\begin{equation}\label{eqn:eqlaw}
		\mathbb P_N(B):=P_{N}\big(\big\{\omega\in\Omega_N\,\big|\,{(u^{N}_t(\omega))}_{t\geq 0}\in B\big\}\big)
	\end{equation}
	for a Borel measurable set $B\subseteq C([0,\infty), H_0')$.
	
\begin{prop}\label{prp:tightness}
	If ${(m_N\circ\Lambda_N^{-1})}_{N\in\N}$ is a tight family of probability measures on $H$, then ${(\mathbb P_N)}_{N\in\N}$ is tight on $C([0,\infty), H_0')$.
\end{prop}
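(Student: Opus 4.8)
The plan is to invoke the standard tightness criterion for measures on path space $C([0,\infty),H_0')$, namely the Aldous–Kurtz type criterion (as found in \cite{kurtz86}), which reduces tightness of ${(\mathbb P_N)}_{N\in\N}$ to two ingredients: (a) tightness of the one-time marginals $m_N\circ\Lambda_N^{-1}$ on $H_0'$ (which, since $H\hookrightarrow H_0'$ is Hilbert–Schmidt hence compact, follows from the assumed tightness on $H$), and (b) a uniform modulus-of-continuity estimate for the processes $(u^N_t)_{t\geq 0}$ in $H_0'$. Because the inclusion $H_0\hookrightarrow H$ is Hilbert–Schmidt, it suffices to control, for a fixed orthonormal basis ${(e_k)}_{k\in\N}$ of $H_0$, the real-valued coordinate processes $t\mapsto {}_{H_0'}\langle u^N_t,e_k\rangle_{H_0}$ with summable-in-$k$ bounds; this is the usual device for lifting tightness from finite-dimensional projections to $H_0'$.

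First I would fix $e\in H_0\subseteq H$ and consider the additive functional $t\mapsto \langle u^N_t,e\rangle_H = \langle \Lambda_N X^N_{N^2t},e\rangle_H = (\Lambda_N^* e)^{\textnormal T} X^N_{N^2 t}$, where $\Lambda_N^*:H\to\R^{k_N}$ is the adjoint. By Condition \ref{condi:tighti}(ii), the linear functional $x\mapsto (\Lambda_N^*e)^{\textnormal T}x$ lies in $\dom(\E^N)$ with $\Gamma_N(l,l)\leq \gamma\,|\Lambda_N^* e|_\euc^2$ $m_N$-a.e. The key quantitative input is then to bound $|\Lambda_N^* e|_\euc^2$: by Proposition \ref{prop:oder}, $|\Lambda_N x|_H^2\le 3^d N^{-2}|x|_\euc^2$, so the operator norm of $\Lambda_N:\R^{k_N}\to H$ is at most $3^{d/2}N^{-1}$, whence $|\Lambda_N^* e|_\euc \le 3^{d/2}N^{-1}|e|_H$ and therefore $\Gamma_N(l,l)\le \gamma\,3^d N^{-2}|e|_H^2$ uniformly in $N$. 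The time-rescaling by $N^2$ in \eqref{eqn:dynLambda} exactly compensates this $N^{-2}$: the Fukushima decomposition of the additive functional $t\mapsto l(X^N_{N^2 t})$ produces a martingale part with quadratic variation $\int_0^t N^2\,\Gamma_N(l,l)(X^N_{N^2s})\,\de s$, which is bounded by $\gamma\,3^d|e|_H^2\,t$ — uniformly in $N$ — and a zero-energy part which, under the stationary measure $P_N$, one controls by the Lyons–Zheng / forward-backward martingale decomposition so that its increments also satisfy a uniform $L^2$-bound of order $|t-s|$ times a constant independent of $N$. This yields, for each fixed $e$, the Kolmogorov-type estimate $\EE_{P_N}\big[\,|\langle u^N_t,e\rangle_H-\langle u^N_s,e\rangle_H|^2\,\big]\le C\,|e|_H^2\,|t-s|$ with $C$ independent of $N$, $e$.

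Next, I would assemble these coordinate estimates into a single $H_0'$-estimate. Taking $e=e_k$ from an orthonormal basis of $H_0$, the squared $H_0'$-norm of the increment is $\|u^N_t-u^N_s\|_{H_0'}^2=\sum_k |\langle u^N_t-u^N_s,e_k\rangle_H|^2$, but to get a summable bound one instead works with the Hilbert–Schmidt embedding: writing $e_k=\sigma_k \hat e_k$ where $\hat e_k$ is an orthonormal system in $H$ and $\sum_k\sigma_k^2=\|i\|_{\textnormal{HS}}^2<\infty$, one gets $\EE_{P_N}\|u^N_t-u^N_s\|_{H_0'}^2\le C\,\|i\|_{\textnormal{HS}}^2\,|t-s|$, again uniformly in $N$. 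Combined with the compactness of $H\hookrightarrow H_0'$ applied to the tight family $m_N\circ\Lambda_N^{-1}$ (giving tightness of the time-marginals on $H_0'$), the Kurtz criterion \cite[Thm.~4.5, Ch.~3]{kurtz86} delivers tightness of ${(\mathbb P_N)}_{N\in\N}$ on $C([0,\infty),H_0')$.

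The main obstacle I anticipate is handling the zero-energy (non-martingale) part of the Fukushima decomposition of the time-rescaled coordinate functional with bounds uniform in $N$: its naive energy estimate degrades, and one must exploit stationarity via the time-reversal (Lyons–Zheng) identity, which expresses the zero-energy increment as half the difference of a forward and a backward martingale, each with the same quadratic-variation control as above. A secondary technical point is verifying that Condition \ref{condi:tighti}(ii) genuinely applies to the functional $x\mapsto (\Lambda_N^* e)^{\textnormal T}x$ for $e\in H_0$ (not merely for coordinate functionals) and that the local, conservative structure of $\E^N$ makes the Fukushima decomposition available under $P_N$ on the whole line $[0,\infty)$; these follow from Corollary \ref{cor:Mexist} and Remark \ref{rem:weed}.
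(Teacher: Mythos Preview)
Your approach is essentially the same as the paper's: Lyons--Zheng forward--backward decomposition of the linear coordinate functional, quadratic-variation control via Condition~\ref{condi:tighti}(ii) and the operator-norm bound $\|\Lambda_N\|\le 3^{d/2}N^{-1}$ from Proposition~\ref{prop:oder}, with the $N^{-2}$ exactly cancelling the time rescaling, then summation over an orthonormal basis of $H_0$ using $\|i\|_{\textnormal{HS}}<\infty$.

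There is, however, one genuine technical gap. The second-moment estimate you state,
\[
\EE_{P_N}\big[\,|u^N_t-u^N_s|_{H_0'}^2\,\big]\le C\,|t-s|,
\]
is \emph{not} sufficient for tightness on $C([0,\infty),H_0')$: the Kolmogorov tightness criterion requires an exponent strictly larger than~$1$ on $|t-s|$. (The Aldous criterion you allude to yields tightness only in Skorokhod topology, and your reference \cite[Thm.~4.5, Ch.~3]{kurtz86} concerns identification of limits via separating classes, not tightness.) The paper closes this gap by passing to fourth moments via the Burkholder--Davis--Gundy inequality applied to the two martingale pieces of the Lyons--Zheng decomposition: since you already have the pathwise bound $\langle M\rangle_t-\langle M\rangle_s\le C(t-s)$ on the bracket, BDG gives
\[
\Big(\EE_{P_N}\big[\,|u^N_t-u^N_s|_{H_0'}^4\,\big]\Big)^{1/4}\le C'\,(t-s)^{1/2},
\]
i.e.\ a fourth-moment bound with exponent~$2>1$, which does suffice. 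Your argument delivers exactly the quadratic-variation control needed for this, so the fix is a one-line upgrade from $L^2$ to $L^4$ via BDG; but as written the proposal would not conclude.
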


\begin{proof}
	This proof follows a well-known idea for the derivation of a tightness result in the context of symmetric Markov processes, which has been used in \cite[Theorem 6.1]{kondratiev03}, \cite[Theorem 5.1]{kondratiev07} or \cite[Lemma 5.2]{debussche07} among others. 
	As argued in each of the three cited references, the desired tightness property can be concluded from the tightness of the invariant measures ${(m_N\circ\Lambda_N^{-1})}_{N\in\N}$ together with the estimate
	\begin{equation}\label{eqn:dedirest}
		\sup_{N\in\N}\Big(\int_{C([0,\infty), H_0')}|u_t-u_s|_{H'_0}^p\de\mathbb P_N(u)\Big)^{\frac{1}{p}}\leq c(t-s)^{\frac{1}{2}},\quad 0\leq s< t<\infty,
	\end{equation}
	for some constants $c\in(0,\infty)$ and $p\in (1,\infty)$.
	
	The rest of the proof is dedicated to the verification of \eqref{eqn:dedirest}. The strategy is analogous as
	in \cite[Theorem 6.1]{kondratiev03}, \cite[Theorem 5.1]{kondratiev07} or \cite[Lemma 5.2]{debussche07}.
	At first we fix $N\in\N$ and $\varphi\in H_0$.
	Let $r_T$ be the time-reversal operator $\Omega_N\to\Omega_N$ w.r.t.~time $0<T<\infty$.
	There is a $P_N$-martingale ${(M_t)}_{t\geq 0}$ with
    quadratic variation given by
	\begin{equation}\label{eqn:tightOne:QVar}
		{\bracket M}_t
		=2\int_0^t\Gamma_{N}\big(\langle\varphi,\Lambda_N\,\cdot\,\rangle,\langle\varphi,\Lambda_N\,\cdot\,\rangle\big)\big(X_s\big)\de s\quad P_N\text{-a.s.},\quad t\geq 0,
	\end{equation}
	such that
	\begin{equation}\label{eqn:tightOne:FwdBwd}
		\langle\varphi,\Lambda_N X_t\rangle- \langle\varphi,\Lambda_N X_0\rangle=\frac{1}{2}M_t+\frac{1}{2}( M_{T-t}\circ r_T-M_T\circ r_T)\quad P_N\text{-a.s.},\quad 0\leq t\leq T.
	\end{equation}
	The formula of \eqref{eqn:tightOne:FwdBwd} is called Lyons-Zheng decomposition and stated in \cite[Thm.~5.7.1]{fukushima11}. The identification of the quadratic variation in \eqref{eqn:tightOne:QVar} follows from \cite[Thm.~5.1.3.~\&~5.2.3]{fukushima11}. From \eqref{eqn:tightOne:QVar}, \eqref{eqn:heightDef2} and Condition \ref{condi:tighti} (ii) we obtain the estimate
	\begin{equation}\label{eqn:tightOne:1}
		{\bracket M}_t-{\bracket M}_s\,\leq(t-s) 2N^{d-2}\gamma \sum_{i=1}^{k_N}\langle\varphi,\xi_{N,i}\rangle^2\quad P_N\text{-a.s.},\quad 0\leq s\leq t.
	\end{equation}
	Now, we derive an estimate, first using \eqref{eqn:tightOne:FwdBwd} and the Minkowski inequality, then the Burkholder-Davis-Gundy inequality and \eqref{eqn:tightOne:QVar}. It holds
	\begin{align}\label{eqn:tightOne:2}
		&\Big(\int_{\Omega_N}\langle\varphi,u^N_t(\omega)-u^N_s(\omega)\rangle^4\de P_N(\omega)\Big)^{\frac{1}{4}}\nonumber\\
		&\leq\frac{1}{2}\Big(\int_{\Omega_N}(M_{N^2t}-M_{N^2s})^4\de P_N\Big)^{\frac{1}{4}}+\frac{1}{2}\Big(\int_{\Omega_N}(M_{T-N^2s}-M_{T-N^2t})^4\de P_N\Big)^{\frac{1}{4}} \nonumber\\
		&\leq \frac{C}{2}\Big(\int_{\Omega_N} \big({\bracket M}_{N^2t}-{\bracket M}_{N^2s}\big)^2\de P_N\Big)^{\frac{1}{4}}\nonumber\\&\quad\phantom{\leq\;}+\frac{C}{2}\Big(\int_{\Omega_N} \big({\bracket M}_{T-N^2s}-{\bracket M}_{T-N^2t}\big)^2\de P_N\Big)^{\frac{1}{4}},\quad 0\leq s\leq t\leq T/N^2.
	\end{align}
	with a constant $C\in(0,\infty)$ independent of $N$. 
	We continue the estimate of \eqref{eqn:tightOne:2}
	applying \eqref{eqn:tightOne:1}, then the Cauchy-Schwarz inequality together the estimates $\langle\xi_{N,i},\xi_{N,i}\rangle\leq N^{-d}$ for $i=1,\dots,k_N$
	and $\sum_{i=1}^{k_N}1_{\supp{\xi_{N,i}}}(\cdot)\leq 3^d$. It holds
	\begin{align}\label{eqn:tightOne:3}
		&\Big(\int_{\Omega_N}\big\langle\varphi,u^N_t(\omega)-u^N_s(\omega)\big\rangle^4\de P_N(\omega)\Big)^{\frac{1}{2}}\nonumber\\&\leq C^2(t-s)N^d\gamma\sum_{i=1}^{k_N}\langle\varphi,\xi_{N,i}\rangle^2
		\leq (t-s)C^2\gamma\sum_{i=1}^{k_N}\langle1_{\supp{\xi_{N,i}}}\varphi,\varphi\rangle \nonumber\\&\leq (t-s)(2\cdot3^d)C^2\gamma\langle\varphi,\varphi\rangle.
	\end{align}
	Now, we fix an orthonormal basis $\{\varphi_i|$ $i\in\N\}$ of $H_0$. Using the Minkowski inequality and then \eqref{eqn:tightOne:3} we obtain
	\begin{multline*}
		\Big(\int_{\Omega_N}{\big|u^N_t-u^N_s\big|}^4_{H'_0}\de P_N\Big)^{\frac{1}{2}}
		=\Big(\int_{\Omega_N}\Big(\sum_{i\in\N}{\big\langle\varphi_i,u^N_t-u^N_s\big\rangle}^2\Big)^2\de P_N\Big)^{\frac{1}{2}}\\
		\leq \sum_{i\in\N}\Big(\int_{\Omega_N}{\big\langle\varphi_i,u^N_t-u^N_s\big\rangle}^4\de P_N\Big)^{\frac{1}{2}}\leq (t-s)(2\cdot3^d)C^2\gamma\|i\|_{\textnormal{HS}}^2
	\end{multline*}
	or analogously $$\Big(\int_{C([0,\infty), H_0')}|u_t-u_s|_{H'_0}^4\de\mathbb P_N(u)\Big)^{\frac{1}{4}}\leq C{(2\cdot 3^d\gamma )}^\frac{1}{2} \|i\|_{\textnormal{HS}}
	(t-s)^{\frac{1}{2}}.$$ 
	This concludes the proof.
\end{proof}

As done in Theorem \ref{thm:HeightMaps} we now compare
two different choices, $\Xi$ and $\widetilde\Xi$, assuming that each of them meets Condition \ref{condi:Xi}. 
So, for $p\in G_N$ we have
\begin{equation*}
	\xi_N^p(z):=\Xi(Nz-p),\quad z\in\R^d,
\end{equation*}
on the one hand, and
\begin{equation*}
	\widetilde \xi_N^p(z):=\widetilde \Xi(Nz-p),\quad z\in\R^d,
\end{equation*}
on the other. This leads to two different height maps,
\begin{equation*}
	\Lambda_Nx(z)=N^{\frac{d}{2}-1}\sum_{i=1}^{k_N}x_i\xi_{N,i}(z),\quad x\in\R^{k_N},\,z\in D,
\end{equation*}
with $\xi_{N,i}:=\xi_{N}^{p_i}$, as well as
\begin{equation*}
	\widetilde \Lambda_Nx(z)=N^{\frac{d}{2}-1}\sum_{i=1}^{k_N} x_i\widetilde \xi_{N,i}(z),\quad x\in\R^{k_N},\,z\in D,
\end{equation*}
with $\widetilde \xi_{N,i}:=\widetilde \xi_{N}^{p_i}$. We define $\mathbb P_N$, respectively $\widetilde{\mathbb P_N}$, for these two height maps according to in \eqref{eqn:dynLambda}, \eqref{eqn:dynXi} and \eqref{eqn:eqlaw}.

\begin{corollary}\label{thm:Q1dyn}
	Assume that ${(m_N\circ\Lambda_N^{-1})}_{N\in\N}$ is a tight family of probability measures on $H$. Let $P$
	be a Borel probability measure on $C([0,\infty), H_0')$.
	Regarding weak measure convergence on $C([0,\infty), H_0')$ it holds
	\begin{equation*}
		{(\mathbb P_N)}_{N\in\N}\Rightarrow P\quad\text{if and only if}\quad
		{(\widetilde {\mathbb P_N})}_{N\in\N}\Rightarrow P.
	\end{equation*}
\end{corollary}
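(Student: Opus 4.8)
The plan is to lift the proof of Theorem \ref{thm:HeightMaps} to path space, using Proposition \ref{prp:tightness} for compactness and reducing the identification of the limit to the static comparison already contained in that proof. First note that the hypothesis is symmetric in $\Lambda_N$ and $\widetilde\Lambda_N$: the inclusion \eqref{eqn:tightOne:4} established inside the proof of Theorem \ref{thm:HeightMaps} shows that tightness of ${(m_N\circ\Lambda_N^{-1})}_{N\in\N}$ on $H$ is equivalent to tightness of ${(m_N\circ\widetilde\Lambda_N^{-1})}_{N\in\N}$ on $H$. Since $\widetilde\Xi$ also satisfies Condition \ref{condi:Xi}, Proposition \ref{prp:tightness} applies verbatim with $\widetilde\Lambda_N$ in place of $\Lambda_N$, so both ${(\mathbb P_N)}_{N\in\N}$ and ${(\widetilde{\mathbb P_N})}_{N\in\N}$ are tight on $C([0,\infty),H_0')$. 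It therefore suffices to prove that, assuming $\mathbb P_N\Rightarrow P$, every weak limit point of ${(\widetilde{\mathbb P_N})}_{N\in\N}$ coincides with $P$; the reverse implication then follows by interchanging the roles of $\Xi$ and $\widetilde\Xi$.

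Because both path-measure sequences are tight, weak convergence on $C([0,\infty),H_0')$ can be checked on a convergence-determining family of cylinder functionals. I would take $F(u)=g\big(\langle\varphi_1,u_{t_1}\rangle,\dots,\langle\varphi_n,u_{t_n}\rangle\big)$ with $n\in\N$, $0\leq t_1<\dots<t_n$, $g\colon\R^n\to\R$ bounded and Lipschitz, and $\varphi_1,\dots,\varphi_n$ continuous with compact support contained in $D$; this is the path-space analogue of the separating family used, via \cite[Theorem 4.5 of Chapter 3]{kurtz86}, in the proof of Theorem \ref{thm:HeightMaps}. For every such $F$ one has $\int F\,\de\mathbb P_N\to\int F\,\de P$ by assumption, so it remains to verify
\begin{equation*}
	\int F\,\de\widetilde{\mathbb P_N}-\int F\,\de\mathbb P_N
	=\int_{\Omega_N}\Big[g\big((\langle\varphi_j,\widetilde\Lambda_N X^N_{N^2t_j}\rangle)_{j=1}^n\big)
	-g\big((\langle\varphi_j,\Lambda_N X^N_{N^2t_j}\rangle)_{j=1}^n\big)\Big]\de P_N\;\longrightarrow\;0.
\end{equation*}

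The crucial ingredient is a uniform static estimate, which I would extract from the proof of Theorem \ref{thm:HeightMaps}: comparing both $\langle\varphi,\Lambda_Nx\rangle_H$ and $\langle\varphi,\widetilde\Lambda_Nx\rangle_H$ with the common sum $N^{-\frac{d}{2}-1}\sum_i\varphi(p_i/N)x_i$ through the modulus of continuity of $\varphi$ on the scale $\sqrt d/N$, and then replacing $|\widetilde\Lambda_Nx|_H$ by $|\Lambda_Nx|_H$ via the operator-norm bound \eqref{eqn:tightOne:opnormEst} (Proposition \ref{prop:oder}), one obtains for each continuous $\varphi$ with compact support in $D$ a null sequence $\delta_N(\varphi)\to0$, depending only on $\varphi$, such that
\begin{equation*}
	\big|\langle\varphi,(\widetilde\Lambda_N-\Lambda_N)x\rangle_H\big|\leq\delta_N(\varphi)\,|\Lambda_Nx|_H\qquad\text{for all }x\in\R^{k_N},\ N\in\N.
\end{equation*}
Now fix $\varepsilon>0$; by tightness of ${(m_N\circ\Lambda_N^{-1})}_N$ choose a compact $K\subset H$ with $m_N(\Lambda_N^{-1}(K))\geq1-\varepsilon$ for all $N$, and since $m_N$ is invariant for $\mathbf M_N$ (Corollary \ref{cor:Mexist}), the event $A_N:=\bigcap_{j=1}^n\{X^N_{N^2t_j}\in\Lambda_N^{-1}(K)\}$ has $P_N(A_N)\geq1-n\varepsilon$. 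On $A_N$ the Lipschitz property of $g$ together with the displayed estimate and $|\Lambda_Nx|_H\leq\sup_{h\in K}|h|_H$ bound the integrand above by $(\mathrm{Lip}\,g)\sum_{j=1}^n\delta_N(\varphi_j)\sup_{h\in K}|h|_H$, which tends to $0$ as $N\to\infty$; off $A_N$ the integrand is at most $2\|g\|_\infty$. Hence $\limsup_N\big|\int F\,\de\widetilde{\mathbb P_N}-\int F\,\de\mathbb P_N\big|\leq 2n\varepsilon\|g\|_\infty$, and letting $\varepsilon\downarrow0$ identifies the limit of ${(\widetilde{\mathbb P_N})}_{N}$ as $P$.

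The step I expect to be the main obstacle is the uniform static estimate above: one has to see that the interpolation discrepancy $\langle\varphi,(\widetilde\Lambda_N-\Lambda_N)x\rangle_H$ is controlled by $|\Lambda_Nx|_H$ with a prefactor that is independent of $x\in\R^{k_N}$ and vanishes as $N\to\infty$, which is precisely where the modulus-of-continuity argument of Theorem \ref{thm:HeightMaps} has to be merged with the two-sided norm comparison of Proposition \ref{prop:oder} (note that $|u^N_t-\widetilde u^N_t|_H$ itself does \emph{not} become small, so the weak testing against $\varphi$ is essential). A secondary technical point — handled as in Theorem \ref{thm:HeightMaps} — is to confirm that the chosen cylinder functionals separate the relevant (tight) path measures on $C([0,\infty),H_0')$, so that convergence of finite-dimensional distributions suffices; this rests on the fact that continuous, compactly supported functions on $D$ form a separating family and that, for tight sequences, weak convergence on the path space reduces to convergence of the finite-dimensional distributions.
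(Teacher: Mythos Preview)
Your proof is correct and takes a genuinely different route from the paper's. The paper also starts from tightness of both path-measure sequences via Proposition \ref{prp:tightness} and reduces to finite-dimensional distributions, but then argues by repeatedly invoking Theorem \ref{thm:HeightMaps} as a black box: for a two-time functional $\int g(\Lambda_N x)\,T_t^N(f\circ\Lambda_N)\,\de m_N$, one first freezes $f$ and swaps $\Lambda_N$ for $\widetilde\Lambda_N$ in the $g$-slot (a static application of Theorem \ref{thm:HeightMaps}), then uses the \emph{symmetry} of $T_t^N$ to move the semigroup onto $g$ and repeat the swap in the $f$-slot; an induction handles $n$ times. Your argument instead extracts the quantitative inequality $\big|\langle\varphi,(\widetilde\Lambda_N-\Lambda_N)x\rangle\big|\leq\delta_N(\varphi)\,|\Lambda_Nx|_H$ from inside the proof of Theorem \ref{thm:HeightMaps} (via Proposition \ref{prop:oder} and the modulus-of-continuity estimate) and applies it simultaneously at all $n$ times, controlling $|\Lambda_NX^N_{N^2t_j}|_H$ on a high-probability event through \emph{invariance} of $m_N$ alone.

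What each approach buys: the paper's proof is modular---it treats Theorem \ref{thm:HeightMaps} as a finished statement and needs no new estimate---but it genuinely uses the symmetry of $T_t^N$ to flip the semigroup between slots. Your proof is more elementary and more robust: it never uses symmetry, only stationarity, so it would go through unchanged for non-symmetric conservative processes satisfying Condition \ref{condi:tighti} (ii). One small point to make explicit in your write-up: the cylinder functionals you test against use $\varphi_j\in C_c(D)\subset H$, which a priori only separate measures on $H^n$, not on $(H_0')^n$. This is harmless because, by invariance and the assumed tightness of $(m_N\circ\Lambda_N^{-1})_N$ on $H$, all finite-dimensional marginals of $\mathbb P_N$, $\widetilde{\mathbb P}_N$ (and hence of any limit $P$) are tight on $H^n$; so one may carry out the identification on $H^n$ and then push forward to $(H_0')^n$. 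You allude to this, but it is worth stating, since the paper glosses over the same point.
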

\begin{proof}
	This corollary is a consequence of Theorem \ref{thm:HeightMaps}, Proposition \ref{prp:tightness} and the Markov property.
	The assumption that ${(m_N\circ\Lambda_N^{-1})}_{N\in\N}$ is tight on $H$ (and so is ${(m_N\circ\widetilde \Lambda_N^{-1})}_{N\in\N}$ by Theorem \ref{thm:HeightMaps}) yields the tightness of ${(\mathbb P_N)}_{N\in\N}$
	and of ${(\widetilde{\mathbb P_N})}_{N\in\N}$ on $H_0'$ due to Proposition \ref{prp:tightness}.
	Therefore, it suffices to show the equivalence of weak convergence w.r.t.~the finite-dimensional distributions. The statement is easily obtained as a consequence of the Markov property and multiple applications of Theorem \ref{thm:HeightMaps}. We demonstrate
	the strategy in the case of a two-dimensional distribution. 
	We want to show that for any Borel probability measure $\mu$ on $H\times H$,
	\begin{multline}\label{eqn:startEQ}
		\lim_{N\to\infty}\int_{\R^{k_N}}g(\Lambda_Nx)T_t^N(f\circ\Lambda_N)(x)\de m_N(x)=
		\int_{H\times H} g(h_1)f(h_1)\de\mu(h_1,h_2) \\\text{for all }f,g\in C_b(H),
	\end{multline}
	is equivalent to
	\begin{multline}\label{eqn:goalEQ}
		\lim_{N\to\infty}\int_{\R^{k_N}}g(\widetilde\Lambda_Nx)T_t^N(f\circ\widetilde \Lambda_N)(x)\de m_N(x)=
		\int_{H\times H} g(h_1)f(h_1)\de\mu(h_1,h_2) \\\text{for all }f,g\in C_b(H).
	\end{multline}
	We remark that naturally Theorem \ref{thm:HeightMaps} applies regarding the weak measure convergence of 
	finite measures (instead of probabilities), as well.
	Now, applying Theorem \ref{thm:HeightMaps} onto \eqref{eqn:startEQ} for a fixed but arbitrary choice of $f\in C_b(H)$, we see that
	\eqref{eqn:startEQ} is equivalent to
	\begin{multline}\label{eqn:nextEQ}
		\lim_{N\to\infty}\int_{\R^{k_N}}g(\widetilde\Lambda_Nx)T_t^N(f\circ\Lambda_N)(x)\de m_N(x)=
		\int_{H\times H} g(h_1)f(h_1)\de\mu(h_1,h_2) \\\text{for all }g,f\in C_b(H).
	\end{multline}
	Next, using the symmetry of $T_t^N$ for $N\in\N$ and applying Theorem \ref{thm:HeightMaps} onto \eqref{eqn:nextEQ}
	for a fixed but arbitrary choice of $g\in C_b(H)$, it follows that \eqref{eqn:nextEQ} is equivalent to
	\begin{multline*}
		\lim_{N\to\infty}\int_{\R^{k_N}}T_t^Ng(\widetilde\Lambda_Nx)(f\circ\widetilde \Lambda_N)(x)\de m_N(x)=
		\int_{H\times H} g(h_1)f(h_1)\de\mu(h_1,h_2) \\\text{for all }g,f\in C_b(H).
	\end{multline*}
	Hence, the equivalence of \eqref{eqn:startEQ} and  \eqref{eqn:goalEQ} is proven.
	This strategy can be generalized to show the  equivalence of weak convergence for any finite-dimensional distributions of ${(\mathbb P_N)}_{N\in\N}$ and ${(\widetilde{\mathbb P_N})}_{N\in\N}$ in a straight forward way by induction.
\end{proof}

	\subsection{Gradient forms and dynamics}
From here on, we make an assumption on the height map $\Lambda_N$ regarding the property of the set $G_N=ND_N\cap \Z^d$, complementing the condition of \eqref{eqn:GEen}.
We assume that there exists $M\in\N$ such that for $N\geq M$:
 $D_N$ is a bounded open set, whose closure
$\overline D_N$ is a strict subset of $D$ with distance from $\R^d\setminus D$ being  larger or equal  $\sqrt{d}/N$, i.e.
\begin{equation*}
	|z-z_0|_\textnormal{euc}\geq \sqrt d/N\quad\text{if}\quad z_0\in D_N,\,z\in\R^{d}\setminus D.
\end{equation*}
We note that this amendment on the definition of $\Lambda_N$ is inline with the height map considered in Example \ref{exa:sfPoly}.
Then, for $N\in\N$ large enough, the support of the function $\xi_{N,i}=\Xi(N\,\cdot\,-p_i)$ is contained in $D$ for $i=1,\dots, k_N$. In particular, for all asymptotic statements, we may assume that
\begin{equation*}
	\int_D\xi_{N,i}(z)\de z=N^{-d},\quad i=1,\dots,k_N.
\end{equation*}

\begin{remark}\label{rem:MplusT}
	\begin{thm(ii)}
	\item Let's assume the weak measure convergence of ${(m_N\circ\Lambda_N^{-1})}_{N\in\N}$ towards a Borel probability measure $m$ on $H$ is given. Then, Mosco convergence of Dirichlet forms in the framework of Kuwae-Shioya provides a method to identify the accumulation points granted by Proposition \ref{prp:tightness} and thus prove the weak convergence of equilibrium laws. This concept is well-known and exploited in many surveys, e.g.~\cite{kondratiev07, toelle06, KOLESNIKOV06, kolesnikov05, Zambotti04}.
	The Mosco convergence of those gradient-type Dirichlet forms, which are relevant to the scaling limit of skew interacting Brownian motions in the context of mesoscopic interface models, is the topic of the subsequent Section \ref{sec:Mokoko}.
	
	\item While the question of  Mosco convergence is more involved,  
	the pointwise convergence of gradient forms on the space 
	\begin{equation*}
	 \mathcal C:=\Big\{H\ni h\mapsto g(\langle\,\cdot\,, f_1\rangle,\dots,\langle\,\cdot\,, f_m\rangle)
	 \,\Big|\,m\in\N,\,g\in C_b^1(\R^m),\, f_1,\dots,f_m\in C(\overline D)\Big\}
	\end{equation*}
	is easily shown, as the next Lemma \ref{lem:strConvF} demonstrates. It is included at this point, 
	because the appearing scaling factor of $N^2$ reflects the choice of time scale in \eqref{eqn:dynLambda},
	and also to further motivate the discussion of Section \ref{sec:Mokoko}.
	\end{thm(ii)}
\end{remark}

The elements in $\mathcal C$ are Fréchet differentiable with gradient
\begin{equation*}
	\grad g(\langle\,\cdot\,, f_1\rangle,\dots,\langle\,\cdot\,, f_m\rangle)(h)
	=\sum_{i=1}^m \partial_ig(\langle h, f_1\rangle,\dots,\langle h, f_m\rangle)f_i
\end{equation*}
at a point $h\in H$, where $m\in\N,\,g\in C_b^1(\R^m),\, f_1,\dots,f_m\in C(\overline D)$.

\begin{lemma}\label{lem:strConvF}
	We assume the weak measure convergence of ${(m_N\circ\Lambda_N^{-1})}_{N\in\N}$ towards a Borel probability measure $m$ on $H$.
	For $F,G\in\mathcal C$ it holds
	\begin{equation*}
	\lim_{N\to\infty}N^2\sum_{i=1}^{k_N}\int_{\R^{k_N}}\frac{\partial (F\circ\Lambda_N)}{\partial x_i}\frac{\partial (G\circ\Lambda_N)}{\partial x_i}\de m_N(x)=\int_H\langle\grad F,\grad G\rangle\de m.
	\end{equation*}
\end{lemma}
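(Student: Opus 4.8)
The plan is to reduce the claimed limit to an explicit finite-sum expression indexed by the grid points and recognize it as a Riemann sum converging to the Hilbert-space inner product $\int_H\langle\grad F,\grad G\rangle\de m$. First I would compute the partial derivatives of $F\circ\Lambda_N$ for $F(h)=g(\langle h,f_1\rangle,\dots,\langle h,f_m\rangle)\in\mathcal C$. By linearity of $\Lambda_N$ and the chain rule,
\begin{equation*}
	\frac{\partial(F\circ\Lambda_N)}{\partial x_i}(x)=\sum_{k=1}^m\partial_k g\big(\langle\Lambda_Nx,f_1\rangle,\dots,\langle\Lambda_Nx,f_m\rangle\big)\,N^{\frac{d}{2}-1}\langle\xi_{N,i},f_k\rangle,
\end{equation*}
and similarly for $G(h)=\tilde g(\langle h,\tilde f_1\rangle,\dots)$. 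Plugging these in, the factor $N^2$ combines with $(N^{\frac{d}{2}-1})^2=N^{d-2}$ to give an overall $N^d$, and the $N$-dependent sum over $i$ becomes $N^d\sum_{i=1}^{k_N}\langle\xi_{N,i},f_k\rangle\langle\xi_{N,i},\tilde f_l\rangle$ for each pair of indices $k,l$.

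The key step is then to show $N^d\sum_{i=1}^{k_N}\langle\xi_{N,i},f_k\rangle\langle\xi_{N,i},\tilde f_l\rangle\to\langle f_k,\tilde f_l\rangle_H$ uniformly enough to pass the limit inside, and crucially that this convergence is \emph{locally uniform} in the argument $h=\Lambda_Nx$, so that the outer factors $\partial_k g(\dots)$ behave well. Since $\int_D\xi_{N,i}\de z=N^{-d}$ (using the support assumption made just before the lemma) and $f_k,\tilde f_l\in C(\overline D)$ are uniformly continuous, one has $\langle\xi_{N,i},f_k\rangle=N^{-d}f_k(p_i/N)+o(N^{-d})$ with the error controlled by the modulus of continuity over balls of radius $\sqrt d/N$ — exactly the kind of estimate already used in the proof of Theorem \ref{thm:HeightMaps} (e.g.\ via $\xi_N^p(z)^2\leq\xi_N^p(z)$ and $\sum_p\xi_N^p\leq 1$). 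Hence $N^d\sum_i\langle\xi_{N,i},f_k\rangle\langle\xi_{N,i},\tilde f_l\rangle$ is a Riemann sum for $\int_D f_k\tilde f_l\de z=\langle f_k,\tilde f_l\rangle_H$ over the exhausting sets $D_N$, and converges to it; the contribution of $\overline D\setminus\bigcup_N D_N$ vanishes by \eqref{eqn:GEen}. One should be slightly careful and write $N^d\langle\xi_{N,i},f_k\rangle\langle\xi_{N,i},\tilde f_l\rangle$ as the integral of $f_k\tilde f_l$ against the probability density $N^d\xi_{N,i}\eins_D$ concentrated near $p_i/N$, plus a Cauchy-Schwarz-type remainder bounded by $\|f_k\|_\infty\cdot(\text{osc of }\tilde f_l)$, summing to $o(1)$.

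Putting these together, $N^2\sum_i\partial_i(F\circ\Lambda_N)\partial_i(G\circ\Lambda_N)(x)$ equals $\sum_{k,l}\partial_k g(\vec v_N(x))\partial_l\tilde g(\vec w_N(x))\big(\langle f_k,\tilde f_l\rangle_H+\varepsilon_{N,k,l}(x)\big)$, where $\vec v_N(x)=(\langle\Lambda_Nx,f_j\rangle)_j$, $\vec w_N(x)=(\langle\Lambda_Nx,\tilde f_j\rangle)_j$, and $\sup_x|\varepsilon_{N,k,l}(x)|\to 0$. Since $g,\tilde g\in C_b^1$, the function $h\mapsto\sum_{k,l}\partial_k g(\langle h,f_j\rangle_j)\partial_l\tilde g(\langle h,\tilde f_j\rangle_j)\langle f_k,\tilde f_l\rangle_H=\langle\grad F(h),\grad G(h)\rangle_H$ is bounded and continuous on $H$, hence a legitimate integrand for weak convergence; the $\varepsilon_{N,k,l}$ terms are $o(1)$ uniformly and the $\partial$-factors are bounded, so they contribute nothing in the limit. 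Therefore, by the hypothesis $m_N\circ\Lambda_N^{-1}\Rightarrow m$,
\begin{equation*}
	\lim_{N\to\infty}N^2\sum_{i=1}^{k_N}\int_{\R^{k_N}}\frac{\partial(F\circ\Lambda_N)}{\partial x_i}\frac{\partial(G\circ\Lambda_N)}{\partial x_i}\de m_N=\int_H\langle\grad F,\grad G\rangle\de m.
\end{equation*}
The main obstacle is the bookkeeping of the error terms: one must ensure the $N\to\infty$ Riemann-sum convergence holds uniformly in the base point $h$ (so it survives integration against the moving measures $m_N\circ\Lambda_N^{-1}$) rather than merely pointwise, but because $f_k,\tilde f_l$ are fixed continuous functions on the compact $\overline D$ the oscillation estimates are genuinely uniform, so this is a matter of care rather than a deep difficulty.
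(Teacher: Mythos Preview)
Your proof is correct and follows essentially the paper's approach: both compute the partial derivative via the chain rule, factor out the $x$-independent sum $N^d\sum_i\langle\xi_{N,i},f_k\rangle\langle\xi_{N,i},\tilde f_l\rangle$, show it converges to $\langle f_k,\tilde f_l\rangle_H$ (the paper does this by proving $f_N(z):=N^d\sum_p\langle\xi_N^p,f\rangle\xi_N^p(z)\to f(z)$ pointwise and integrating against $g$, you via a direct Riemann-sum argument---either works), and conclude via $m_N\circ\Lambda_N^{-1}\Rightarrow m$ applied to the bounded continuous integrand $h\mapsto\langle\grad F(h),\grad G(h)\rangle_H$. One clarification: your error term $\varepsilon_{N,k,l}$ does \emph{not} depend on $x$ at all (the Riemann sum involves only the fixed functions $f_k,\tilde f_l$), so the uniformity-in-$h$ concern you raise at the end is moot and the argument is actually simpler than you suggest.
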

\begin{proof}	
    We first show that
    \begin{equation}\label{eqn:fff}
    \lim_{N\to\infty}\sum_{p\in G_N}N^d\langle\xi_N^p,f\rangle\langle\xi_N^p,g\rangle =\langle f,g\rangle
    \end{equation}
    for $f,g\in C(\overline D)$. 
    The statement is equivalent to $\lim_{N\to\infty}\int_Df_N(z)g(z)\de z=\int_Df(z)g(z)\de z$
     with
    \begin{equation*}
    	f_N(z):=\sum_{p\in G_N}N^d\langle\xi_N^p,f\rangle\xi_N^p(z),\quad z\in D.
    \end{equation*}
    It holds $|f_N(z)|\leq \sup_{z'\in\overline D}f(z')$ for $z\in D$. So,  Lebesgue's dominated convergence can be applied.    
     Since the increasing sets ${(D_N)}_{N\in\N}$ exhaust $D$ up to a set of Lebesgue measure zero (see the assumptions in \eqref{eqn:GEen}),
    it suffices to fix an arbitrary choice for $N_0\in\N$ and prove that $f_N$ converges to $f$ pointwisely on $D_{N_0}$ as $N\to\infty$.
    Let $N_0\in\N$ and $K$ be a compact set contained in $D_{N_0}$. Since $D_{N_0}\cap\Z^d\subset D_N\cap\Z^d=G_N$ for $N\geq N_0$, the support of a function $\Xi(Nz-\mathbf k)$, $\mathbf k\in\Z^d\setminus G_N$, does not intersect with $K$, if $N$ is chosen large enough. So, for $z_0\in K$ and $N$ large enough, we have
    \begin{equation*}
    	\sum_{p\in G_N}\xi_N^p(z_0)=\sum_{p\in G_N}\Xi(Nz_0-p)
    	=\sum_{\mathbf k\in\Z^d}\Xi(Nz_0-\mathbf k)=1.
    \end{equation*}
    In particular, for $N$ large enough and $z_0\in K$, using $\int_D\xi_N^p(z)\de z=N^{-d}$ 
    for $p\in G_N$, it holds
	\begin{align*}
		&\Big|\sum_{p\in G_N}N^d\int_D\xi_N^p(z)f(z)\de z\:\xi_N^P(z_0)-f(z_0)\Big|\\
		&=\Big|\sum_{p\in G_N}N^d\int_D\xi_N^p(z)f(z)\de z\:\xi_N^P(z_0)-
		f(z_0)N^d\sum_{p\in G_N}\xi_N^p(z_0)\int_D \xi_N^p(z)\de z\Big|
		\\&\leq N^d\sum_{p\in G_N}\int_{\R^d}\xi_N^p(z)|f(z)-f(z_0)|\de z\,
		\xi_N^p(z_0)
		\leq\max_{\substack{z\in \overline D\\|z_0-z|_\textnormal{euc}\leq {\frac{ 2 \sqrt d}{N}}}}|f(z)-f(z_0)|.
	\end{align*}
    This proves equation \eqref{eqn:fff}.
	
	From \eqref{eqn:fff}, in turn, it follows that
		\begin{equation}\label{eqn:eveneven}
			\lim_{N\to\infty}N^d\sum_{p\in G_N}\int_{\R^{k_N}}\langle \xi_{N}^p,V(\Lambda_Nx)\rangle
			\langle \xi_{N}^p,g\rangle\de m_N(x)=\int_H \langle V(h),g\rangle\de m(h)
		\end{equation}
		 if 
		 \begin{equation}\label{eqn:typespec}
		 	V:H\ni h\;\mapsto\; G_1(h)f_1+\dots G_m(h)f_m\in H,
		 \end{equation}
		 for some $G_1,\dots G_m\in C_b(H)$ and $f_1,\dots, f_m\in C_b(\overline D)$. 
		 By linearity, \eqref{eqn:eveneven} generalizes to
		 \begin{equation}
		 	\lim_{N\to\infty}N^d\sum_{p\in G_N}\int_{\R^{k_N}}\langle \xi_{N}^p,V(\Lambda_Nx)\rangle
		 	\langle \xi_{N}^p,W(\Lambda_Nx)\rangle\de m_N(x)=\int_H \langle V,W\rangle\de m
		 \end{equation}
		 if $W(h)$ is of the same type as $V(h)$, specified in \eqref{eqn:typespec}.
		 This proves the claim of the lemma, because 
		 \begin{equation*}
		 	\frac{\partial (F\circ\Lambda_N)}{\partial x_i}(x)
		 	=N^{\frac{d}{2}-1}\big\langle \xi_{N,i},\grad F(\Lambda_Nx)\big\rangle_H,\quad x\in\R^N,\quad 
		 	i=1,\dots,k_N,
		 \end{equation*}
		 and $\grad F$ is a vector field of the type specified in \eqref{eqn:typespec} for $F\in \mathcal C$.
\end{proof}

\begin{remark}\label{rem:remmi}
	\begin{thm(ii)}
	\item Let $\mu$ be a non-degenerate, centred Gaussian measure on $H$ with covariance operator $Q$
	and let $(A,\dom(A)):=(2Q^{-1},\textnormal{Im}(Q))$ be defined as a positive, self-adjoint, operator on $H$.
	We consider a reference measure $m=\rho\mu$ with a density 
	 $\rho:H\to[c_1,c_2]$, where $0<c_1<c_2<\infty$ and $\int_H\rho\de\mu=1$.
	In this case, 
	$$\E(F,G):=\frac{1}{2}\int_H\langle\grad F,\grad G\rangle\rho\de \mu,\quad F,G\in\mathcal C,$$
	can be extended (in the sense of a minimal closed extension) to a local, quasi-regular and conservative Dirichlet form $(\E,\dom(\E))$ on $L^2(H,\rho\mu)$, for which $\mathcal C$ is a form core (see \cite{Ro2zhu, sheng92, ROCKNER19921, albeverio90}).
	The domain $\dom(\E)$  coincides with the Gaussian $1,2$-Sobolev space on $H$ w.r.t.~$\mu$.
	We assume that
	\begin{equation}\label{eqn:pert}
		\rho(h)=\exp\Big(-\int_D g(h(z))\de z\Big),\quad h\in H,
	\end{equation}
	for some bounded function $g:\R\to\R$ with bounded total variation.	
	There exists a densely and continuously included Hilbert space $H_1\subset H$,
	such that $\rho$ is a function of bounded variation in the Gelfand triple
	\begin{equation*}
		H_1\subset H=H'\subset H_1'.
	\end{equation*}
	Indeed, this is true if $H_1$ can be continuously embedded into the space of continuous functions on $\overline D$. So, we may for example choose $H_1$ as the Sobolev space of square-integrable functions on $D$ with an order which is strictly greater than $d/2$.
	
	Let $(\nabla,\dom(\nabla))$ be the gradient operator $L^2(H,\mu)\to L^2(H,\mu,H)$, whose domain is the Gaussian $1,2$-Sobolev space w.r.t.~$\mu$. We denote the adjoint operator, a closed operator $L^2(H,\mu,H)\to L^2(H,\mu)$, by $(\nabla^*,\dom(\nabla^*))$.
	Then,
	\begin{multline*}
		\Big\{H\ni h\mapsto\sum_{l=1}^n f_l(h)\varphi_l\in H\,\Big|\,n\in\N,\,f_l\in C_b^1(H),\,\varphi_l\in\dom(A)\cap H_1 \Big\}\\=:(C_b^1)_{\dom(A)\cap H_1}\subset
		\dom(\nabla^*).
	\end{multline*}
	Due to \cite[Thm.~3.1]{Ro2zhu}, there exists a 
	a finite positive measure $\|\de\rho\|$ on $H$ and a Borel measurable map $\sigma_\rho:H\to H_1'$ such that $|\sigma_\rho(h)|_{H_1'}=1$ for $\|\de\rho\|$-a.e.~$h\in H$ and	
	\begin{equation*}
		\int_H\nabla^*G(h)\rho(h)\de \mu(h)=\int_H \leftindex_{H_1}\langle G(h),\sigma_\rho(h){\rangle}_{H_1'}\|\de\rho\|(h),\quad G\in (C_b^1)_{\dom(A)\cap H_1}.
	\end{equation*}
\item	Let 
	$$\mathbf M=\big(\Omega,\mathcal F,{(\mathcal F_t)}_{t\geq 0},(X_t)_{t\geq 0},(P_h)_{h\in H} \big)$$ 
	be the diffusion process on $H$, properly associated with $\E$.
	 By virtue of \cite[Thm.~3.2]{Ro2zhu} for quasi almost every $h\in H$ we have
	\begin{multline}\label{eqn:RZZ}
		\langle \varphi_l,X_t-X_0\rangle=\int_0^t\langle \varphi,\de W^h_s\rangle+\frac{1}{2}
		\int_0^t \leftindex_{H_1}\langle\varphi,\sigma_\rho(X_s){\rangle}_{H_1'}\de L_s^{\|\de\rho\|}
		\\-\int_0^t\langle A\varphi,X_s\rangle\de s,\quad t\geq 0,\,P_h\text{-a.s.}
	\end{multline}
	for $\varphi\in \dom(A)\cap H_1$, where ${(W^h_t)}_{t\geq 0}$ is an 
	${(\mathcal F_t)}_t$-cylindrical Wiener process and ${(L_t^{\|\de\rho\|})}_{t\geq 0}$ is the positive continuous additive functional of the process ${(X_t)}_{t\geq 0}$ in Revuz correspondence with $\|\de\rho\|$.
	\end{thm(ii)}
\end{remark}

\begin{exa}\label{exa:moveto}
	In this example, we fix $N\in\N$, a symmetric, positive, linear operator  $A_N:\R^{k_N}\to\R^{k_N}$,
	 a function $g_{N,0}\in C_b^1(\R)$ and real numbers $y_j,\beta_j\in\R$ for $j=1,\dots,M$. 
	 		\begin{thm(ii)}
	 \item  With
		\begin{equation*}
		g_N(y):=g_{N,0}(y)+\sum_{j=1}^M\beta_j\eins_{(-\infty,y_j]}(y),\quad y\in\R,
	\end{equation*}
	 we define the Borel probability measure
	\begin{gather*}
		\de m_{A_N}(x):=\frac{1}{Z_N}\exp\Big(-x^\textnormal{T}A_Nx-\frac{1}{N^d}\sum_{i=1}^{k_N} g_N\big(N^{\frac{d}{2}-1}x_i\big)\Big)\de x,\\
		Z_N:=\int_{\R^{k_N}}\exp\Big(-x^\textnormal{T}A_Nx-\frac{1}{N^d}\sum_{i=1}^{k_N} g_N\big(N^{\frac{d}{2}-1}x_i\big)\Big)\de x,
	\end{gather*}
	on $\R^{k_N}$.	The factor 
$$\exp\Big(-\frac{1}{N^d}\sum_{i=1}^{k_N} g_N\big(N^{\frac{d}{2}-1}x_i\big)\Big),\quad x\in\R^{k_N},$$ should be interpreted as  a discrete version of the perturbing density considered in \eqref{eqn:pert}.
	In the situation of Example \ref{exa:sfPoly}, i.e.~$d=2$ with $N\geq 6$, $D=(0,1)^2$, 
	$G_N=(2, N-2)^2$ and $k_N:=|G_N|$, we might choose $A_N$ such that
	\begin{equation}\label{eqn:Aham}
		x^\textnormal{T}A_Nx=4H_N^{\alpha}(x),\quad x\in\R^{k_N},
	\end{equation}
	with the Hamiltonian function $H_N^{\alpha}$  defined in that example.
	The scaling sequence $\alpha:\N\to(0,\infty)$ determines the limiting distribution of the height
	map $\Lambda_N$ for $N\to\infty$, if there is no perturbing density ($g_N\equiv0$).
	Now, we state the Fukushima decomposition for the dynamic height variables from the microscopic perspective,
	if a perturbing density of the type above is present.

    \item As  $g_N$ is a bounded function, we define the domain of the gradient-type Dirichlet form $(\E^N,\dom(\E^N))$ on $L^2(\R^{k_N},m_{A_N})$
    as the set of all elements in the $1,2$-Sobolev space of the Gaussian measure with density proportional to $x\mapsto \exp(-x^\textnormal{T}A_Nx)$ on $\R^{k_N}$. So, let
	\begin{align*}
		\E^N(u,v):=\frac{1}{2}\sum_{i=1}^{k_N}\int_{\R^{k_N}}\frac{\partial u}{\partial x_i}\frac{\partial v}{\partial x_i}\de m_{A_N},\quad u,v\in\dom(\E^N).
	\end{align*}
	The domain $\dom(\E^N)$ comprises all weakly differentiable elements of $L^2(\R^{k_N},m_{A_N})$ whose derivatives in any direction are again square-integrable w.r.t.~$m_{A_N}$.
	The form $(\E^N,\dom(\E^N))$ obviously meets Condition \ref{condi:tighti} (ii). Let 
	$$\mathbf M_N=\big(\Omega_N,\mathcal F^N,{(\mathcal F^N_t)}_{t\geq 0},(X^N_t)_{t\geq 0},(P^N_x)_{x\in\R^{k_N}} \big)$$ be the associated Markov process as in the beginning of this section. 
	Again, $(X^{N,i}_t)_{t\geq 0}$ for $i=1,\dots k_N$ denotes the $i$-th coordinate process.
    By Lemma \ref{lem:fukudec} of the Appendix it holds
\begin{multline}\label{eqn:skewIB}
	\de X^{N,i}_t=-\big(A_N X^{N}_t\big)_i\de t-\frac{1}{2}N^{-\tfrac{d}{2}-1}\,g_{N,0}'\big(N^{\frac{d}{2}-1}X^{N,i}_t\big)\de t
	\\+\sum_{j=1}^M\tfrac{1-e^{-\beta_j/N^d}}{1+e^{-\beta_j/N^d}}\de l_t^{N,i,\tilde y_j}
	+\de B_t^{N,i},\quad t\geq 0,
\end{multline}
$P_x^N$-a.s.~for each $x\in\R^{k_N}$,
where ${(B_t^{N,i})}_{t\geq 0}$, $i=1,\dots, k_N$, are independent Brownian motions
and $l_t^{N,i,\tilde y_j}$ is the local time of ${(X^{N,i}_t)}_{t\geq 0}$ at $\tilde y_j:=N^{1-\frac{d}{2}}y_j$.
	The linear term of the drift in \eqref{eqn:skewIB}
	for the situation of Example \ref{exa:sfPoly} with \eqref{eqn:Aham} equals
	\begin{equation*}
		-\big(A_N X^{N}_t\big)_i=-2\,\partial_i H_N^\alpha (X^N_t).
	\end{equation*}
\end{thm(ii)}
\end{exa}    
    
	We continue in the setting of Example \ref{exa:moveto} and explain the main motivation 
	for the upcoming Section \ref{sec:Mokoko} on Mosco convergence.
	The following Remark \ref{rem:Argu} shows how the results of Section \ref{sec:Mokoko} (in particular, see Theorem \ref{thm:maiMO} and Corollary \ref{cor:mai}) apply to this case. Let ${(T^N_t)}_{t\geq 0}$ denote the semigroup 
	on $L^2(\R^{k_N},m_{A_N})$ corresponding to $(\E^N,\dom(\E^N))$. 
	We are interested in the weak measure convergence of the equilibrium laws ${(\mathbb P_N)}_{N\in\N}$ for the scaled, interpolated process
    \begin{equation*}
	u^N_t:=\Lambda_N X^{N}_{N^2t},\quad t\geq 0,
    \end{equation*}
	(as in \eqref{eqn:eqlaw},\eqref{eqn:dynXi} and \eqref{eqn:dynLambda}).
	The finite-dimensional distributions of $\mathbb P_N$ are given in terms of the semigroup
	${(T^N_t)}_{t\geq 0}$ for each $N\in\N$. Let $n\in\N$ and $f_1,\dots,f_n$ be bounded measurable functions $H\to\R$.
	Setting $\tilde f_l=f_l\circ\Lambda_N$ for $l=1,\dots, n$ the Markov property of the process $\mathbf M_N$ and the definition of $\mathbb P_N$ imply
	\begin{multline}\label{eqn:finidi}
		\int_{C([0,\infty),H)}f_1(u_{t_1})\cdot f_2(u_{t_1+t_2})\cdot\dots\cdot f_n(u_{t_1+\dots+t_N})\de\mathbb P_N(u)
		\\=\int_{\R^{k_N}}T_{N^2t_1}^N\big(\tilde f_1\cdot T^N_{N^2t_2}\big(\dots T_{N^2t_{k-1}}^N\big(\tilde f_{k-1}\cdot T_{N^2t_k}^N\tilde f_k\big)\dots\big)\big)\de m_{A_N}.
	\end{multline}
	Further, let $\rho$, $\mu$ and $(\E,\dom(\E))$ be as in Remark \ref{rem:remmi}.
	The expected limit of ${(\mathbb P_N)}_{N\in\N}$ is the equilibrium law associated to $\E$,
	if ${(g_N)}_N$ is chosen in a suitable way.
	Arguing as in Remark \ref{rem:MplusT} (i) we observe the following.
	\begin{remark}\label{rem:Argu}
	By virtue of Proposition \ref{prp:tightness} and \eqref{eqn:finidi}, it suffices prove:
	\begin{itemize}
		\item $m_{A_N}\circ\Lambda_N^{-1}\Rightarrow \rho\mu$ weakly on $H$.
		\item The image form of $(N^2\E^N,\dom(\E))$ under the height map $\Lambda_N$ converges to $(\E,\dom(\E))$ in the sense of Mosco-Kuwae-Shioya.
	\end{itemize}
	Thanks to Theorem \ref{thm:HeightMaps} and Corollary \ref{thm:Q1dyn}, if these two statements are true for a particular choice for $\Xi:\R^d\to[0,1]$ in the definition of \ref{eqn:heightDef} for $\Lambda_N$, then they automatically hold for any choice according to Condition \ref{condi:Xi}. This is very useful, because for 
	$\Xi:=\eins_{[-1,0)^d}$ the functions $\xi_{N,i}=\Xi(N\,\cdot\,-p_i)$, $i=1,\dots,k_N$, which are the images of the unit-vectors under $N^{1-\frac{d}{2}}\Lambda_N$, are again orthogonal to each other. Moreover, 
	\begin{equation*}
		\widehat \xi_{N,i}:=N^{\frac{d}{2}}\xi_{N,i},\quad i=1,\dots k_N,
	\end{equation*}
	is normed in $H$, which means that $N\Lambda_N$ is an isometry from $\R^{k_N}$ into $H$.
	Hence, in case $\Xi:=\eins_{[-1,0)^d}$,  the image form of $(N^2\E^N,\dom(\E))$ under the height map $\Lambda_N$ simply equals
	\begin{align*}
		N^2\E^N(u\circ\Lambda_N,v\circ\Lambda_N)&=
		\E^N(u\circ N\Lambda_N,v\circ N\Lambda_N)\\
		&=\frac{1}{2}\sum_{i=1}^{k_N}\int_H \frac{\partial u}{\partial \widehat\xi_{N,i}}
		\frac{\partial v}{\partial \widehat\xi_{N,i}}\de (m_{A_N}\circ\Lambda_N)^{-1},\quad u,v\in\mathcal C,
	\end{align*}
	where $\widehat\xi_{N,1},\dots, \widehat\xi_{N,k_N}$ is an orthonormal basis for the $k_N$-dimensional subspace $(\Lambda_N(\R^{k_N}),\langle\,\cdot\,,\,\cdot\,\rangle)$ of $H$. This reduces the task of the final Section \ref{sec:Mokoko} to the analysis of Mosco-Kuwae-Shioya convergence of gradient forms in the framework of varying $L^2$-spaces over the state space $H$. Moreover, we hint at the fact, that for $\Xi:=\eins_{[-1,0)^d}$ we have $\int_D\xi_{N,i}\de z=N^{-d}$ for $i=1,\dots,k_N$ and therefore
	\begin{align*}
		\exp\Big(\int_D g_N\circ (\Lambda_Nx)\de z\Big)=
		\exp\Big(\frac{1}{N^d}\sum_{i=1}^{k_N}g_N(N^{\frac{d}{2}-1}x_i)\Big),\quad x\in\R^{k_N}.
	\end{align*}
	This means, 
	\begin{equation*}
		\de (m_{A_N}\circ\Lambda_N)^{-1}(h)=\exp\Big(\int_D g_N\circ h\de z\Big)(\de\mu_{A_N}\circ\Lambda_N^{-1})(h),
	\end{equation*}
	where $\mu_{A_N}$ is the Gaussian measure on $\R^{k_N}$ with covariance operator $\frac{1}{2}A_N^{-1}$. 
	\end{remark}
	
	\section{An application of Mosco convergence}\label{sec:Mokoko}
	\subsection{Preliminaries and notation}\label{sec:comp}
As in previous sections, $(H,\langle\cdot,\cdot\rangle):=L^2(D,\de z)$.
Let ${(\mu_N)}_{N\in\N}$ be a sequence of Borel probabilities on $H$ and $\mu$ be their limit in the sense of weak measure convergence on $H$. We assume that all these elements are centred Gaussian and on top of that, the limit $\mu$ has full topological support on $H$. Their covariances are denoted by 
\begin{equation*}
	\cov\mu(h_1,h_2):=\int_H\langle h_1,k\rangle\langle h_2,k\rangle\de\mu(k),\quad h_1,h_2\in H,
\end{equation*}
and
\begin{equation*}
	\cov{\mu_N}(h_1,h_2
	):=\int_H\langle h_1,k\rangle\langle h_2,k\rangle\de\mu_N(k),\quad h_1,h_2\in H,
\end{equation*}
for $N\in\N$.

Throughout Section \ref{sec:comp} we fix a sequence ${(\varphi_N)}_{N\in\N}$ and an element $\varphi$ in $H$, such that:
\begin{itemize}
	\item $\langle\varphi,\varphi\rangle=1$ and there exists $\lambda\in(0,\infty)$ with $\lambda\cov\mu(\varphi,h)= \langle \varphi,h\rangle$ for $h\in H$.
	\item $\langle\varphi_N,\varphi_N\rangle=1$ and there exists $\lambda_N\in(0,\infty)$ with $\lambda_N\cov{\mu_N}(\varphi_N,h)= \langle \varphi_N,h\rangle$ for $h\in H$.
	\item ${|\varphi-\varphi_N|}_H\to0$ as well as $\lambda_N\to\lambda$ for $N\to\infty$.
\end{itemize}

Now we cite some preliminaries on the disintegration of these Gaussian measures.
Under the above conditions $\mu$ is $\varphi$-quasi-invariant, i.e.~the image measure of $\mu$  under the shift $H\ni h$ $\mapsto$ $h+r\varphi$ $\in H$ is absolutely continuous w.r.t.~$\mu$ for every $r\in\R$ (see \cite[Prop.~5.5]{albeverio90}). 
By \cite[Prop.~4.2]{albeverio90}, $\mu$ and the measure
\begin{equation*}
	\sigma_\varphi(B):=\int_\R\mu(B-r\varphi)\de r,\quad B\in\mathcal B(H),
\end{equation*}
are absolutely continuous w.r.t.~each other and setting $\pi_\varphi h:=h-\langle h,\varphi\rangle \varphi$ for $h\in H$ we have 
\begin{equation}\label{eqn:mint}
	\int_Hu\de\mu=\int_{\pi_\varphi(H)}\int_\R u(h+r\varphi)\frac{\de\mu}{\de\sigma_\varphi}(h+r \varphi)\de r\de( \mu\circ\pi_\varphi^{-1})(h)
\end{equation}
for any measurable function $u:H\to[0,\infty)$.
\cite[Prop.~5.5]{albeverio90} provides $\frac{\de\mu}{\de\sigma_\varphi}$ in an explicit form, which in our case yields
\begin{align}\label{eqn:mint2}
	\frac{\de\mu}{\de\sigma_\varphi}(h)
	=\sqrt{\frac{\lambda}{2\pi}}\exp\Big(-\frac{\lambda\langle \varphi,h\rangle^2}{2}\Big),
	\quad h\in H.
\end{align}
From \eqref{eqn:mint} and \eqref{eqn:mint2} we obtain
\begin{equation}\label{eqn:dislimit}
	\int_Hu\de\mu=\sqrt{\frac{\lambda}{2\pi}}\int_{\pi_\varphi(H)}\int_\R u(h+r\varphi)\exp(-\tfrac{\lambda r^2}{2})\de r
	\de( \mu\circ\pi_\varphi^{-1})(h)
\end{equation}
for any measurable function $u:H\to[0,\infty)$.

Let $N\in\N$. We now define a rotation $J_N:H\to H$ which maps $\varphi_N$ onto $\varphi$ and then disintegrate the image of $\mu_N$ under $J_N$ in the same way as we did with $\mu$.
If $\varphi_N=\varphi$, we set $J_N:=$ id.
Otherwise, let $J_N:H\to H$ be the isometric isomorphism which leaves the orthogonal complement of span$(\{\varphi,\varphi_N\})$ invariant, while on span$(\{\varphi,\varphi_N\})$ its action is defined via the basis transformation
\begin{align*}
	\varphi_N&\quad\longmapsto\quad \varphi\\
	\frac{\varphi-\langle\varphi_N,\varphi\rangle\varphi_N}{\sqrt{1-\langle\varphi,\varphi_N\rangle^2}}&\quad\longmapsto\quad
	\frac{\langle\varphi,\varphi_N\rangle\varphi-\varphi_N}{\sqrt{1-\langle\varphi,\varphi_N\rangle^2}}.
\end{align*}
With $\tilde\mu_N:=\mu_N\circ J_N^{-1}$ it holds
\begin{equation*}
	\lambda_N\int_H\langle \varphi,k\rangle\langle h,k\rangle\de\tilde \mu_N(k)
	=\lambda_N\cov{\mu_N}(J_N^{-1}\varphi,J_N^{-1}h)=\langle \varphi_N,J_N^{-1}h\rangle
	=\langle \varphi,h\rangle
\end{equation*}
for $h\in H$. 
So, in analogy to the disintegration for $\mu$, an application of \cite[Prop.~4.2 \& Prop.~5.5]{albeverio90} yields that $\tilde\mu_N$ is $\varphi$-quasi-invariant and 
\begin{equation}\label{eqn:distilde}
	\int_Hu\de\tilde \mu_N=\sqrt{\frac{\lambda_N}{2\pi}}\int_{\pi_\varphi(H)}\int_\R u(h+r\varphi)\exp(-\tfrac{\lambda_N r^2}{2})\de r
	\de( \tilde \mu\circ\pi_\varphi^{-1})(h)
\end{equation}
for any measurable function $u:H\to[0,\infty)$.
\begin{remark}\label{rem:strOpN}
	For $N\in\N$ we set 
	\begin{equation*}
	v_N:=\frac{\varphi-\langle\varphi_N,\varphi\rangle\varphi_N}{\sqrt{1-\langle\varphi,\varphi_N\rangle^2}}\quad
	\text{and}\quad w_N:=\frac{\langle\varphi,\varphi_N\rangle\varphi-\varphi_N}{\sqrt{1-\langle\varphi,\varphi_N\rangle^2}}.
	\end{equation*}
	For $u\in H$ with $|u|_H=1$ it holds
	\begin{align*}
		|J_Nu-u|_H^2&=\big(\langle \varphi,J_Nu\rangle-\langle\varphi,u\rangle\big)^2+\big(\langle w_N,J_Nu\rangle-\langle w_N,u\rangle\big)^2
		\leq|\varphi_N-\varphi|_H^2+|v_N-w_N|_H^2\\
		&=|\varphi-\varphi_N|_H^2+
		\frac{\big|\varphi(1-\langle\varphi_N,\varphi\rangle)+\varphi_N(1-\langle\varphi_N,\varphi\rangle)\big|_H^2}
		{1-\langle\varphi,\varphi_N\rangle^2}\\
		&=|\varphi-\varphi_N|_H^2+
		\frac{2(1+\langle\varphi,\varphi_N\rangle)(1-\langle\varphi,\varphi_N\rangle)^2}{1-\langle\varphi,\varphi_N\rangle^2}\\
		&=|\varphi-\varphi_N|_H^2+2(1-\langle\varphi_N,\varphi\rangle).
	\end{align*}
	The right-hand side of this inequality converges to zero for $N\to\infty$ and is independent from $u$.
	Hence, the identity is the limit of ${(J_N)}_{N\in\N}$ w.r.t.~the operator norm on $H$.
\end{remark}

%

We want to tackle the question of Mosco convergence under the existence of perturbing densities for the Gaussian reference measure $\mu_N$, $N\in\N$, and $\mu$.
For a bounded function $f:\R\to\R$ the total variation is defined as
\begin{equation*}
	\textnormal{TV}(f):=\sup_{\substack{m\in\N\\x_1\leq\dots\leq x_m}}\sum_{i=1}^{m-1}|f(x_{i+1})-f(x_i)|\quad\in[0,\infty].
\end{equation*}
For a function $f:\R\to\R$ with $\textnormal{TV}(f)<\infty$ the Jordan decomposition, as derived in \cite[Chapter 5.2]{royden88}, states the existence of $f_1,f_2:\R\to\R$ such that
\begin{align}\label{eqn:tightOne:Jordan}
	f=f_1-f_2,\quad \|f_1\|_\infty+\|f_2\|_\infty\leq\|f\|_\infty+\|f\|_{\textnormal{TV}},\nonumber\\ f_1,f_2\text{ are monotone increasing.}
\end{align}

Until the end of Section \ref{sec:Mokoko} we fix bounded functions $g_N$ for $N\in\N$ and $g$ from $\R$ to $\R$ with finite total variation. Then we define
\begin{equation*}
	Z:=\int_H\exp\Big(\int_D g(h(z))\de z\Big)\de\mu,\quad\varrho(h):=\exp\Big(\int_D g(h(z))\de z\Big)/Z,\quad h\in H,
\end{equation*}
and 
\begin{equation*}
	Z_N:=\int_H\exp\Big(\int_D g_N(h(z))\de z\Big)\de\mu_N,\quad\varrho_N(h):=\exp\Big(\int_D g_N(h(z))\de z\Big)/Z_N,\quad h\in H. 
\end{equation*}
We need the following condition to be satisfied.
\begin{condition}\label{condi:last}
We assume that 
\begin{equation*}
	g=g^{(1)}-g^{(2)},\quad g_N=g_N^{(1)}-g_N^{(2)},
\end{equation*}
are the Jordan decompositions for $g$ , respectively for $g_N$ for each $N\in\N$, in the sense of \eqref{eqn:tightOne:Jordan}, and that the following conditions hold:
\begin{thm(ii)}
	\item $\displaystyle \sup_N\|g_N\|_\infty+\|g_N\|_{\textnormal{TV}}<\infty$.
	\item $\displaystyle\sup_{x\in\R}\inf_{\substack{r\\|r|\leq\delta}}|g^{(i)}(x)-g^{(i)}_N(x+r)|\overset{N\to\infty}{\longrightarrow}0$\quad for every $\delta\in(0,\infty)$ and $i=1,2$.
\end{thm(ii)}
\end{condition}

Let 
\begin{equation*}
	\mathcal F C_b^\infty:=\Big\{H\ni h\mapsto g(\langle\,\cdot\,, f_1\rangle,\dots,\langle\,\cdot\,, f_m\rangle)
	\,\Big|\,m\in\N,\,g\in C_b^\infty(\R^m),\, f_1,\dots,f_m\in H\Big\}.
\end{equation*}
As stated in \cite[Prop.~5.5]{albeverio90}, $\varphi_N$ is admissible for $\mu_N$ in the sense of \cite[Def.~1.1]{sheng92}. Hence, the partial derivative $u\mapsto\partial u/\partial\varphi_N$
with domain
$\{u\in L^2(H,\varrho_N\mu_N)$ $|$ $u$ has a representative from $\mathcal FC_b^\infty(H)\}$
is a well-defined, closable linear operator on $L^2(H,\varrho_N\mu_N)$.
We are interested in the asymptotic behaviour for $N\to\infty$ of the symmetric Dirichlet form $(\E^N_{\varphi_N},\dom(\E^N_{\varphi_N}))$ on $L^2(H,\varrho_N\mu_N)$ which
is the closure of the pre-Dirichlet form
\begin{equation*}
	\E^N_{\varphi_N}(u,v)=\int_H\frac{\partial u}{\partial \varphi_N}\frac{\partial v}{\partial\varphi_N}\varrho_N\de\mu_N
\end{equation*}
with domain $\{u\in L^2(H,\varrho_N\mu_N)$ $|$ $u$ has a representative from 
$\mathcal FC_b^\infty(H)\}$.

It is asymptotically irrelevant if we consider the image form of $(\E^N_{\varphi_N},\dom(\E^N_{\varphi_N}))$ under $J_N$ instead (see Lemma \ref{lem:tentwelve} (ii) and Proposition \ref{prop:coMo} below). This trick allows us to disintegrate the involved reference measures, which vary with $N$, along the same direction $\varphi$ over the base $\pi_\varphi(H)$.
 Using a disintegration of the measures  analogue to \eqref{eqn:distilde}, where instead of $\tilde\mu_N=\mu_N\circ J_N^{-1}$ we consider the disturbed image measures
\begin{equation*}
	(\varrho_N\mu_N)\circ J_N^{-1}=(\underbrace{\varrho_N\circ J_N^{-1}}_{=:\tilde\varrho_N})\tilde\mu_N,
\end{equation*}
we can apply a result form \cite{me21}. 
The necessary preliminaries are summarized in the next remark. Let
\begin{equation*}
	\mathcal N_N(\de r):=\sqrt{\tfrac{\lambda_N}{2\pi}}\exp(-\tfrac{\lambda_N r^2}{2})\de r
\end{equation*}
be the normal distribution on $\R$ with mean zero and variance $1/\lambda_N$. Further, let
\begin{equation*}
\nu_N:=(\tilde\varrho\tilde\mu_N)\circ {\pi_\varphi}^{-1}.
\end{equation*}
The Radon-Nikodym density $\de\nu_N/(\tilde\mu_N\circ\pi_\varphi^{-1})$ coincides for  $(\tilde\mu_N\circ\pi_\varphi^{-1})$ -a.e.~$h\in \pi_\varphi(H)$ with $\int_\R\tilde\varrho_N(h+t\varphi)\de \mathcal N_N(t)$, as follows from \eqref{eqn:distilde}.
We set
\begin{equation*}
	\rho_N(h,r):=\frac{\tilde\varrho_N(h+r\varphi)}{\int_\R\tilde\varrho_N(h+t\varphi)\de \mathcal N_N (t)}
\end{equation*}
for $h\in\pi_\varphi(H)$ and $r\in\R$.

\begin{remark}
	\begin{thm(vi)}
	\item Using \eqref{eqn:distilde} we obtain
	\begin{align*}
		\int_Hu\tilde\varrho_N\de\tilde \mu_N&=\int_{\pi_\varphi(H)}\int_\R u(h+r\varphi)
		\tilde\varrho_N(h+r\varphi)\de \mathcal N_N(r)\de( \tilde \mu\circ\pi_\varphi^{-1})(h)
		\\&=\int_{\pi_\varphi(H)}\int_\R u(h+r\varphi)
		\rho_N(h,r)\de \mathcal N_N(r)\de\nu_N(h).
	\end{align*}
	Hence,
	\begin{align*}
		 I_N:L^2(H,\tilde\varrho_N\tilde \mu_N)&\overset{\simeq}{\longrightarrow} L^2\big(\pi_\varphi(H)\times\R,\rho_N(\nu_N\times \mathcal N_N)\big)\\
		 I_Nu(h,r)&:=u(h+r\varphi)\quad(\nu_N\times \mathcal N_N)\text{ -a.e.}
	\end{align*}
	is an isometric isomorphism.
	In analogy to \cite[Propo.~1.2]{sheng92} we define
	\begin{align*}
		\dom(\tilde \E_\varphi^N):=\Big\{u\in L^2(H,\tilde\varrho_N\tilde\mu_N)\,\Big|\,
		&\text{for }\nu_N\text{-a.e.~}h\in \pi_\varphi(H)\\
		&I_Nu(h,\cdot)\text{ has an absolutely continuous version }f_h\\
		&\text{and }\int_{\pi_\varphi(H)}\int_\R|f_h'(r)|^2\rho_N(h,r)\de \mathcal N_N(r)\de \nu_N(h)<\infty\Big\}
	\end{align*}
	and in this sense 
	\begin{equation*}
		\tilde \E_\varphi^N(u,v):=\int_{\pi_\varphi(H)}\int_\R  u(h+\,\cdot\;\varphi)'(r)\,
		v'(h+\,\cdot\;\varphi)'(r)\rho_N(h,r)\de \mathcal N_N(r)\de \nu_N(h)
	\end{equation*}
	for $u,v\in \dom(\tilde \E_\varphi^N)$.
	By \cite[Thm.~5.1]{sheng92}, the space $\{u\in L^2(H,\tilde\varrho_N\tilde\mu_N)$ $|$ $u$ has a representative from $\mathcal FC_b^\infty(H)\}$ is a form core for $(\tilde \E_\varphi^N,\dom(\tilde \E_\varphi^N))$.
	\item Analogously, we obtain via \eqref{eqn:dislimit} the according disintegration formula for the disturbed measure $\varrho\mu$. Let
	\begin{equation*}
		\mathcal N(\de r):=\sqrt{\tfrac{\lambda}{2\pi}}\exp(-\tfrac{\lambda r^2}{2})\de r
	\end{equation*}
	be the normal distribution on $\R$ with mean zero and variance $1/\lambda$. We set
		$\nu:=(\varrho\mu)\circ {\pi_\varphi}^{-1}$
	and
	\begin{equation*}
		\rho(h,r):=\frac{\varrho(h+r\varphi)}{\int_\R\varrho(h+t\varphi)\de \mathcal N (t)}
	\end{equation*}
	for $h\in \pi_\varphi(H)$, $r\in\R$. It holds
	\begin{equation*}
		\int_Hu\varrho\de \mu=\int_{\pi_\varphi(H)}\int_\R u(h+r\varphi)
		\rho(h,r)\de \mathcal N(r)\de\nu(h)
	\end{equation*}
	and the map
	\begin{align*}
		I:L^2(H,\varrho \mu)&\overset{\simeq}{\longrightarrow} L^2\big(\pi_\varphi(H)\times\R,\rho(\nu\times \mathcal N)\big)\\
		Iu(h,r)&:=u(h+r\varphi)\quad(\nu\times \mathcal N)\text{ -a.e.}
	\end{align*}
	is an isometric isomorphism.
	\item In analogy to \cite[Propo.~1.2]{sheng92} let
	\begin{align*}
		\dom(\E_\varphi):=\Big\{u\in L^2(H,\varrho\mu)\,\Big|\,
		&\text{for }\nu_N\text{-a.e.~}h\in \pi_\varphi(H)\\
		&Iu(h,\cdot)\text{ has an absolutely continuous version }f_h\\
		&\text{and }\int_{\pi_\varphi(H)}\int_\R|f_h'(r)|^2\rho(h,r)\de \mathcal N(r)\de \nu(h)<\infty\Big\}
	\end{align*}
	and in this sense 
	\begin{equation*}
		\E_\varphi(u,v):=\int_{\pi_\varphi(H)}\int_\R  u(h+\,\cdot\;\varphi)'(r)\,
		v'(h+\,\cdot\;\varphi)'(r)\rho(h,r)\de \mathcal N(r)\de \nu(h)
	\end{equation*}
	for $u,v\in \dom( \E_\varphi)$.
	By \cite[Thm.~5.1]{sheng92}, the space $\{u\in L^2(H,\varrho\mu)$ $|$ $u$ has a representative from $\mathcal FC_b^\infty(H)\}$ is a form core for $( \E_\varphi,\dom( \E_\varphi))$.
\end{thm(vi)}
\end{remark}

\subsection{Convergence of component forms}\label{sec:mococo}

Here we show the Mosco-Kuwae-Shioya convergence of the component forms defined above, with the help of \cite{me21}.
The following Lemma \ref{lem:condcons} refers to the case without perturbing density, i.e.~$g_N(\cdot)=g(\cdot)=0$. 
Together with the subsequent Lemma \ref{lem:tentwelve}, these form the preliminaries for the main result of Section \ref{sec:mococo}, Proposition \ref{prop:coMo} below.

\begin{lemma}\label{lem:condcons}
	On the Borel $\sigma$-algebra of $\pi_\varphi(H)\times\R$ we consider the product measures
	\begin{equation*}
		\big(\tilde \mu_N\circ \pi_\varphi^{-1}\big)\times \mathcal N_N
	\end{equation*}
	for $N\in\N$ and their weak limit $\big(\mu\circ \pi_\varphi^{-1}\big)\times \mathcal N$.
	The criteria of \cite[Cond.~3.8]{me21} are satisfied.
\end{lemma}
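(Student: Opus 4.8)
Since in the present situation $g_N\equiv g\equiv 0$, we have $Z_N=Z=1$, $\varrho_N\equiv\varrho\equiv1$, hence $\tilde\varrho_N\equiv1$, $\nu_N=\tilde\mu_N\circ\pi_\varphi^{-1}$, $\nu=\mu\circ\pi_\varphi^{-1}$ and $\rho_N(\cdot,\cdot)=\rho(\cdot,\cdot)=1$. Thus the product measures in the statement are literally $\rho_N(\nu_N\times\mathcal N_N)=(\tilde\mu_N\circ\pi_\varphi^{-1})\times\mathcal N_N$, and all parts of \cite[Cond.~3.8]{me21} referring to the perturbing densities (their uniform boundedness, the convergence of the conditional densities, joint measurability, and the like) are trivially satisfied. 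It therefore remains to check the parts of \cite[Cond.~3.8]{me21} concerning the weak convergence of the measures, together with the topological and (if demanded) moment hypotheses.

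The plan for the weak convergence is: first establish $\tilde\mu_N\Rightarrow\mu$ on $H$, then push forward by $\pi_\varphi$ and by the fibre marginals, and finally pass to products. For the first step, by Remark~\ref{rem:strOpN} we have $J_N\to\operatorname{id}_H$ in operator norm, so the maps $J_N$ converge to $\operatorname{id}_H$ continuously: if $h_N\to h$ in $H$, then $|J_Nh_N-h|_H\leq|h_N-h|_H+|(J_N-\operatorname{id})h|_H\to0$. Combined with the assumed $\mu_N\Rightarrow\mu$, the extended mapping theorem for weak convergence of measures gives $\tilde\mu_N=\mu_N\circ J_N^{-1}\Rightarrow\mu$. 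Since $\pi_\varphi$ is bounded linear, hence continuous, it follows that $\tilde\mu_N\circ\pi_\varphi^{-1}\Rightarrow\mu\circ\pi_\varphi^{-1}$ on $\pi_\varphi(H)$. For the fibres, $\mathcal N_N$ is the centred Gaussian on $\R$ with variance $1/\lambda_N$ and $\lambda_N\to\lambda\in(0,\infty)$, so $\mathcal N_N\Rightarrow\mathcal N$. The product of two weakly convergent sequences of probability measures converges weakly to the product of the limits, whence $(\tilde\mu_N\circ\pi_\varphi^{-1})\times\mathcal N_N\Rightarrow(\mu\circ\pi_\varphi^{-1})\times\mathcal N$; this also identifies the weak limit as claimed in the statement.

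For the remaining hypotheses: $\pi_\varphi(H)\subset H$ and $\R$ are separable metric spaces, the disintegration data are Borel measurable, $\mathcal N$ has full support on $\R$ (as $\lambda>0$), and $\mu\circ\pi_\varphi^{-1}$ has full support on $\pi_\varphi(H)$ because $\mu$ has full support on $H$ by assumption. Should \cite[Cond.~3.8]{me21} additionally ask for a uniform second-moment bound, one invokes that each $\mu_N$ is centred Gaussian on the separable Hilbert space $H$ with $\mu_N\Rightarrow\mu$ Gaussian, so by a classical fact on Gaussian measures the covariances converge in trace norm, in particular $\sup_N\int_H|h|_H^2\,\de\mu_N=\sup_N\operatorname{tr}(\cov{\mu_N})<\infty$; as $J_N$ is an isometric isomorphism, $\cov{\tilde\mu_N}=J_N\cov{\mu_N}J_N^{*}$ has the same trace, so $\sup_N\int_{\pi_\varphi(H)}|h|_H^2\,\de(\tilde\mu_N\circ\pi_\varphi^{-1})(h)<\infty$, and $\int_\R r^2\,\de\mathcal N_N=1/\lambda_N\to1/\lambda=\int_\R r^2\,\de\mathcal N$.

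The only non-routine point is the passage $\mu_N\Rightarrow\mu\ \Rightarrow\ \tilde\mu_N\Rightarrow\mu$: a crude bound on $\int_H F\circ J_N\,\de(\mu_N-\mu)$ does not close, since the integrand $F\circ J_N$ varies with $N$, so one must combine tightness of $\{\mu_N\}$ with the operator-norm convergence $J_N\to\operatorname{id}_H$ from Remark~\ref{rem:strOpN} (which is exactly what the continuous mapping theorem packages). Should \cite[Cond.~3.8]{me21} moreover require a mode of convergence of the spaces $L^2$ over $(\tilde\mu_N\circ\pi_\varphi^{-1})\times\mathcal N_N$ stronger than weak convergence of the measures, one notes that $\mathcal F C_b^\infty$ restricts to a common form core of all the relevant spaces by \cite[Thm.~5.1]{sheng92}, so that the weak convergence established above turns these functions into a compatible approximating family in the Kuwae--Shioya sense; I expect this bookkeeping to be already built into the formulation of \cite[Cond.~3.8]{me21}, so that verifying the points above is sufficient.
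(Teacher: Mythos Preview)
Your argument has a genuine gap: you are guessing at the content of \cite[Cond.~3.8]{me21}, and the guess is off. The condition is not a package of ``weak convergence of the product measures plus topological/moment hypotheses''; it has two specific technical items, and neither reduces to what you verified.

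Item (i) asks that for every $f\in C_b(\pi_\varphi(H)\times\R)$ the fibre integrals $h\mapsto\int_\R f(h,r)\,\de\mathcal N_N(r)$ converge \emph{uniformly in $h$} to $h\mapsto\int_\R f(h,r)\,\de\mathcal N(r)$. This is strictly stronger than the weak convergence $\mathcal N_N\Rightarrow\mathcal N$ or $(\tilde\mu_N\circ\pi_\varphi^{-1})\times\mathcal N_N\Rightarrow(\mu\circ\pi_\varphi^{-1})\times\mathcal N$ you established; it is proved in the paper by the direct estimate
\[
\Big|\int_\R f(h,r)\,\de\mathcal N_N(r)-\int_\R f(h,r)\,\de\mathcal N(r)\Big|\le\|f\|_\infty\int_\R\Big|\tfrac{\de\mathcal N_N}{\de r}-\tfrac{\de\mathcal N}{\de r}\Big|\de r,
\]
with the right-hand side independent of $h$ and going to zero since $\lambda_N\to\lambda$.

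Item (ii) is not about densities $\rho_N$ at all; it concerns the fibre measures $\mathcal N_N$ themselves and requires uniform-in-$N$ control of the quantities $\delta^\kappa_{1/l}(\mathcal N_N)$ and $C^\kappa_{1/l}(\mathcal N_N)$ built from the residual and perturbation operators $R_{1/l}^{\varphi,\eta}$, $I_{1/l}^{\varphi,\eta}$ of \cite[Sect.~2.2]{me21}. Concretely one must show $\lim_{l\to\infty}\sup_N\delta^\kappa_{1/l}(\mathcal N_N)=0$ and $\sup_l\sup_N C^\kappa_{1/l}(\mathcal N_N)<\infty$ for every continuous $\kappa:\R\to[0,1]$ with compact support. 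The paper verifies these by explicit pointwise bounds on $R_{1/l}^{\varphi,\eta}\big(\kappa\,\tfrac{\de\mathcal N_N}{\de r}\big)$ and $I_{1/l}^{\varphi,\eta}\big(\kappa\,\tfrac{\de\mathcal N_N}{\de r}\big)$, exploiting that the Gaussian densities $r\mapsto\sqrt{\lambda_N/2\pi}\exp(-\lambda_N r^2/2)$ are uniformly Lipschitz on compacts because $(\lambda_N)_N$ is bounded. None of this is ``bookkeeping built into the formulation''; it is the actual analytic content of the lemma, and your argument does not touch it.
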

\begin{proof}
	Let $f\in C_b(\pi_\varphi(H)\times\R)$. To verify item (i) of \cite[Cond.~3.8]{me21} it suffices to argue that
	\begin{equation*}
		\Big|\int_\R f(h,r)\de \mathcal N_N(r)\Big|
		\overset{N\to\infty}{\longrightarrow}\Big|\int_\R f(h,r)\de \mathcal N(r)\Big|
	\end{equation*}
	uniformly w.r.t.~$h\in\pi_\varphi(H)$. This simply follows from the estimate
	\begin{multline*}
		\Bigg|\sqrt{\tfrac{\lambda_N}{2\pi}}\,\Big|\int_\R f(h,r)\exp(-\tfrac{\lambda_N r^2}{2})\de r\Big|
		-\sqrt{\tfrac{\lambda}{2\pi}}\,\Big|\int_\R f(h,r)\exp(-\tfrac{\lambda r^2}{2})\de r\Big|\,\Bigg|\\
		\leq\|f\|_\infty \int_\R\Big|\sqrt{\tfrac{\lambda_N}{2\pi}}\exp(-\tfrac{\lambda_N r^2}{2})-
		\sqrt{\tfrac{\lambda}{2\pi}}\exp(-\tfrac{\lambda r^2}{2})\Big|\de r
	\end{multline*}
	and an application of Lebesgue's dominated convergence, since $\lambda_N\to\lambda$ for $N\to\infty$.
	A dominating integrable function for the integrand of the right-hand side is given by
		$r\mapsto 2\sqrt{\tfrac{\lambda}{\pi}}\exp(-\tfrac{\lambda r^2}{4})$
	if $N\in\N$ is large enough such that $\lambda/2<\lambda_N<2\lambda$.

We now address (ii) of \cite[Cond.~3.8]{me21}. Let $\kappa:\R\to[0,1]$ be continuous with compact support.
Further, let $R_{1/l}^{\varphi,\eta}(f)$ and
$I_{1/l}^{\varphi,\eta}(f)$, as defined in \cite[Sect.~2.2]{me21}, be the $\varphi,\eta$-residual and the $\varphi,\eta$-perturbation
of a measurable, non-negative function $f:\R\to\R$ for the parameters $l\in\N$ and $\varphi,\eta:\R\to[0,1]$, which are primal functions as defined in the beginning of \cite[Sect.~2.1]{me21}.
In analogy to \cite{me21} we set $\varphi_{1/l}^\alpha(r)=\varphi(lr-l\alpha)$ for $\alpha\in (1/l)\Z$, $r\in\R$.
The Radon-Nikodym density of $\mathcal N_N$ and $\mathcal N$ w.r.t.~the Lebesgue measure on $\R$ is denoted by $\tfrac{\de \mathcal N_N}{\de r}$, respectively $\tfrac{\de \mathcal N}{\de r}$.
	For $\de r$-a.e.~$r\in\R$ it holds
	\begin{align}\label{eqn:pizza}
		R_{1/l}^{\varphi,\eta}\big(\kappa\,\tfrac{\de \mathcal N_N}{\de r}\big)(r)
		&=\sqrt{\tfrac{\lambda_N}{2\pi}}
		\sum_{\alpha\in\frac{1}{l}\Z}l\int_\R\big|\kappa(t)\exp(-\tfrac{\lambda_N t^2}{2})-\kappa(r)\exp(-\tfrac{\lambda_N r^2}{2})\big|\eta_{1/l}^\alpha(t)\de t\varphi_{1/l}^\alpha(r)
		\nonumber\\&\leq\sqrt{\tfrac{\lambda_N}{2\pi}}\sup_{\substack{t\in\R\\|t-r|\leq4/l}}
		\Big|\kappa(r)\exp(-\tfrac{\lambda_N r^2}{2})-\kappa(t)\exp(-\tfrac{\lambda_N t^2}{2})\Big|
		\nonumber\\&\leq 
		\sqrt{\tfrac{\lambda_N}{2\pi}}\sup_{\substack{t\in\R\\|t-r|\leq4/l}}(\kappa(r)+\kappa(t))
		\Big|\exp(-\tfrac{\lambda_N r^2}{2})-\exp(-\tfrac{\lambda_N t^2}{2})\Big|\nonumber\\
		&\phantom{\leq\,}+\sqrt{\tfrac{\lambda_N}{2\pi}}\sup_{\substack{t\in\R\\|t-r|\leq4/l}}|\kappa(r)-\kappa(t)|
		\Big(\exp(-\tfrac{\lambda_N r^2}{2})+\exp(-\tfrac{\lambda_N t^2}{2})\Big)
	\end{align}
	and also 
	\begin{align}\label{eqn:pizza2}
		I_{1/l}^{\varphi,\eta}\big(\kappa\,\tfrac{\de \mathcal N_N}{\de r}\big)(r)
		&=\sqrt{\tfrac{\lambda_N}{2\pi}}
		\sum_{\alpha\in\frac{1}{l}\Z}l\int_\R\kappa(t)\exp(-\tfrac{\lambda_N t^2}{2})\eta_{1/l}^\alpha(t)\de t\varphi_{1/l}^\alpha(r)
		\nonumber\\&\leq\sqrt{\tfrac{\lambda_N}{2\pi}}\,\exp(-\tfrac{\lambda_N r^2}{2})\sup_{\substack{t\in\R\\|t-r|\leq4/l}}
		\Big(\kappa(t)\exp\big(-\tfrac{\lambda_N (t^2-r^2)}{2}\big)\Big).
	\end{align}
	Due to \eqref{eqn:pizza}, for any bounded, measurable function $f:\R\to\R$ it holds
	\begin{align*}
		\frac{\int_\R f(r)R_{1/l}^{\varphi,\eta}\big(\kappa\,\tfrac{\de \mathcal N_N}{\de r}\big)(r)\de r}{\int_\R f^2\de \mathcal N_N}
		&\leq \int_\R\Big(R_{1/l}^{\varphi,\eta}\big(\kappa\,\tfrac{\de \mathcal N_N}{\de r}\big)(r)\Big)^2\big(\tfrac{\de \mathcal N_N}{\de r}(r)\big)^{-1}\de r\nonumber\\
		&\leq \sup_{\substack{t,r\in\R\\|t-r|\leq4/l}}(\kappa(r)+\kappa(t))
		\Big|1-\exp(-\tfrac{\lambda_N (t^2-r^2)}{2})\Big|\nonumber\\
		&\phantom{\leq\,}+\sup_{\substack{t,r\in\R\\|t-s|\leq4/l}}(\kappa(r)-\kappa(t))
		\Big|1+\exp(-\tfrac{\lambda_N (t^2-r^2)}{2})\Big|.
	\end{align*}
    The right-hand side is independent of the choices for $\eta$, $\varphi$ and $f$ and converges to zero for $l\to\infty$ uniformly w.r.t.~$N\in\N$, because ${(\lambda_N)}_{N\in\N}$ is bounded and $\kappa$ continuous with compact support. This means, for $\delta^\kappa_{1/l}(\mathcal N_N)$ defined as in \cite[Sec.~2.2]{me21}, it holds
    $\lim_{l\to\infty}\sup_{N\in\N}\delta^\kappa_{1/l}(\mathcal N_N)=0$.
    
    Due to \eqref{eqn:pizza2}, for any bounded, measurable function $f:\R\to\R$ it holds
	\begin{align*}
		\frac{\int_\R f(r)I_{1/l}^{\varphi,\eta}\big(\kappa\,\tfrac{\de \mathcal N_N}{\de r}\big)(r)\de r}{\int_\R |f|\de \mathcal N_N}
		&\leq \Big\|I_{1/l}^{\varphi,\eta}\big(\kappa\,\tfrac{\de \mathcal N_N}{\de r}\big)\big(\tfrac{\de \mathcal N_N}{\de r}\big)^{-1}\Big\|_{L^\infty(\R,m_N)}\nonumber\\
		&\leq\sup_{\substack{t,r\in\R\\|t-r|\leq4}}
		\Big(\kappa(t)\exp\big(-\tfrac{\lambda_N (t^2-r^2)}{2}\big)\Big).
	\end{align*}
	The value of the right-hand side is independent of $\eta$, $\varphi$, $f$ and $l\in\N$ and bounded over all $N\in\N$. This means, for $C^\kappa_{1/l}(\mathcal N_N)$ defined as in \cite[Sec.~2.2]{me21}, it holds
	$\sup_{l\in \N}\sup_{N\in\N}C^\kappa_{1/l}(\mathcal N_N)<\infty$. This concludes the proof.
\end{proof}

\begin{lemma}\label{lem:tentwelve}
	Under Condition \ref{condi:last}, it holds:
	\begin{thm(vi)}
		\item $\varrho_N\mu_N\Rightarrow \varrho\mu$ for $N\to\infty$ in the sense of weak measure convergence on $H$.
		\item $\tilde\varrho_N\tilde\mu_N\Rightarrow \varrho\mu$ for $N\to\infty$ in the sense of weak measure convergence on $H$.
		\item Let $f\in C_b(\pi_\varphi(H)\times\R)$.
		\begin{equation}\label{eqn:target3}
			\int_\R f(\,\cdot\,,r)\rho_N(\,\cdot\,,r)\de \mathcal N_N(r)
			\overset{N\to\infty}{\longrightarrow}\int_\R f(\,\cdot\,,r)\rho(\,\cdot\,,r)\de \mathcal N(r)
		\end{equation}
		strongly in the sense of Kuwae-Shioya (\cite{kuwae03}), within the framework of the converging Hilbert spaces $L^2(\pi_\varphi(H),\nu_N)$ $\to$ $L^2(\pi_\varphi(H),\nu)$.
	\end{thm(vi)}
\end{lemma}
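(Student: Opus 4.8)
The plan is to establish (i) directly, to deduce (ii) from (i) via the operator--norm convergence $J_N\to\mathrm{id}$ of Remark \ref{rem:strOpN}, and to obtain (iii) from (i)--(ii), Lemma \ref{lem:condcons} and the fibre--wise apparatus of \cite{me21}. The one genuine difficulty throughout is that $h\mapsto\exp(\int_Dg_N(h(z))\de z)$ is \emph{not} continuous on $H$, since $g_N$ is merely of bounded variation, so that $\mu_N\Rightarrow\mu$ cannot simply be multiplied by a continuous weight; Condition \ref{condi:last}(ii), a uniform convergence ``up to an arbitrarily small horizontal shift'', is exactly what repairs this once the monotonicity of the Jordan parts is exploited.

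For (i), since $\varrho_N\mu_N$ and $\varrho\mu$ are probability measures it suffices to show $\int_HF\varrho_N\de\mu_N\to\int_HF\varrho\de\mu$ for $F\in C_b(H)$; writing $\Phi_N(h):=\exp(\int_Dg_N(h(z))\de z)$, $\Phi(h):=\exp(\int_Dg(h(z))\de z)$ and recalling $Z_N=\int_H\Phi_N\de\mu_N$, this reduces to the elementary statement --- proved first by Skorokhod representation of $\mu_N\Rightarrow\mu$ on the Polish space $H$ together with bounded convergence --- that $\int_HH_N\de\mu_N\to\int_HH\de\mu$ whenever $\sup_N\|H_N\|_\infty<\infty$ and $H_N\to H$ continuously at $\mu$-a.e.\ point, i.e.\ $h_N\to h$ in $H$ implies $H_N(h_N)\to H(h)$ for $\mu$-a.e.\ $h$. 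Applied with $H_N=F\Phi_N$ this leaves the task: for $\mu$-a.e.\ $h$, $h_N\to h$ in $H$ implies $\int_Dg_N(h_N(z))\de z\to\int_Dg(h(z))\de z$. By a subsequence principle one may assume $h_N\to h$ $\de z$-a.e.\ on $D$; then, with $g=g^{(1)}-g^{(2)}$ and $g_N=g_N^{(1)}-g_N^{(2)}$ the Jordan decompositions, monotonicity of the increasing parts together with Condition \ref{condi:last}(ii) yields, for every $\delta>0$ and every sequence $x_N\to x$,
\begin{multline*}
	g^{(i)}(x-2\delta)-\varepsilon_N^{(i)}(\delta)\;\leq\; g_N^{(i)}(x_N)\;\leq\; g^{(i)}(x+2\delta)+\varepsilon_N^{(i)}(\delta)\quad\text{for }N\text{ large},\\
	\text{with}\quad \varepsilon_N^{(i)}(\delta):=\sup_{x}\inf_{|r|\leq\delta}\big|g^{(i)}(x)-g_N^{(i)}(x+r)\big|,
\end{multline*}
and since $\varepsilon_N^{(i)}(\delta)\to0$, letting $N\to\infty$ and then $\delta\downarrow0$ gives $g_N(x_N)\to g(x)$ at every point $x$ that is a continuity point of both $g^{(1)}$ and $g^{(2)}$, i.e.\ off a countable set $J\subset\R$. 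Bounded convergence on $D$ then finishes the argument provided $\de z(\{z\in D:h(z)\in J\})=0$, which holds for $\mu$-a.e.\ $h$ because $J$ is countable and, by non-degeneracy of the Gaussian $\mu$, its one-dimensional marginals are atomless.

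For (ii): by Remark \ref{rem:strOpN}, $J_N\to\mathrm{id}$ (hence $J_N^{-1}\to\mathrm{id}$) in operator norm on $H$; for $F\in C_b(H)$ the maps $F\circ J_N^{-1}$ are uniformly bounded and converge to $F$ uniformly on compacta, so with the tightness of $(\varrho_N\mu_N)_N$ (automatic from (i)) one gets $\int_HF\circ J_N^{-1}\,\de(\varrho_N\mu_N)-\int_HF\,\de(\varrho_N\mu_N)\to0$, and the second term tends to $\int_HF\,\de(\varrho\mu)$ by (i); since $\tilde\varrho_N\tilde\mu_N=(\varrho_N\mu_N)\circ J_N^{-1}$, this is (ii). For (iii), write $F_N:=\int_\R f(\cdot,r)\rho_N(\cdot,r)\de\mathcal N_N(r)$ and $F:=\int_\R f(\cdot,r)\rho(\cdot,r)\de\mathcal N(r)$. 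The normalisations $Z_N$ cancel in $\rho_N$, so $\sup_N\|g_N\|_\infty<\infty$ makes $\rho_N$ and $\rho$ bounded above and below by positive constants uniformly in $N$, with $\int_\R\rho_N(h,r)\de\mathcal N_N(r)=1$; moreover $\tilde\mu_N\circ\pi_\varphi^{-1}\Rightarrow\mu\circ\pi_\varphi^{-1}$ (from $\mu_N\Rightarrow\mu$, $J_N\to\mathrm{id}$ and continuity of $\pi_\varphi$) and $\mathcal N_N\to\mathcal N$ in total variation ($\lambda_N\to\lambda$), whence $\nu_N\Rightarrow\nu$ and $L^2(\pi_\varphi(H),\nu_N)\to L^2(\pi_\varphi(H),\nu)$ in the Kuwae--Shioya sense. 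Weak convergence $F_N\rightharpoonup F$ then follows from (ii): for $G\in C_b(\pi_\varphi(H))$ the disintegration formula above gives $\int F_NG\,\de\nu_N=\int_Hu\,\de(\tilde\varrho_N\tilde\mu_N)$ with $u(k):=f(\pi_\varphi k,\langle k,\varphi\rangle)\,G(\pi_\varphi k)\in C_b(H)$, which tends to $\int_Hu\,\de(\varrho\mu)=\int FG\,\de\nu$, while $\sup_N\|F_N\|_{L^2(\nu_N)}\leq\|f\|_\infty$. To upgrade to strong convergence I would invoke \cite{me21}: Lemma \ref{lem:condcons} verifies \cite[Cond.~3.8]{me21} for the base product measures $(\tilde\mu_N\circ\pi_\varphi^{-1})\times\mathcal N_N$, and the convergence $\rho_N\to\rho$ --- obtained exactly as in (i), now using $J_N^{-1}\to\mathrm{id}$, $\mathcal N_N\to\mathcal N$ in total variation and the same bounded-variation composition argument --- supplies the extra input for the criterion of \cite{me21} on strong Kuwae--Shioya convergence of such fibre-averages; concretely this amounts to the norm convergence $\int F_N^2\de\nu_N\to\int F^2\de\nu$, which follows from the same ingredients.

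I expect the crux to be precisely the passage from Condition \ref{condi:last}(ii) to $\de z$-a.e.\ convergence of $g_N(h_N(\cdot))$ --- i.e.\ reconciling the discontinuities of the bounded-variation weights with the fact that $\mu_N$ and $\mu$ are, in general, mutually singular Gaussians --- together with the atomlessness of the one-dimensional marginals of $\mu$; once that is settled, the varying-measure bookkeeping in (iii) is absorbed entirely by the machinery of \cite{me21}.
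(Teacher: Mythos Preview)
Your approach is correct in outline and genuinely different from the paper's. For (i) the paper does not argue via Skorokhod representation; it applies \cite[Lemma~3.5]{me21}, a majorant/minorant criterion for strong Kuwae--Shioya convergence, constructing continuous bounds $G_{N,m}^{\mathrm{min}}\le Z_N\varrho_N\le G_{N,m}^{\mathrm{maj}}$ via Lemma~\ref{lem:tightOne:majmin} and then invoking the level-set Lemma~\ref{lem:LVset} together with dominated convergence to let $m\to\infty$. Your Skorokhod-plus-bounded-convergence route is more elementary and suffices for the weak convergence actually stated in (i); the monotonicity sandwich you extract from Condition~\ref{condi:last}(ii) is precisely the mechanism encapsulated in Lemma~\ref{lem:tightOne:majmin}, and your ``atomless one-dimensional marginals'' is the content of Lemma~\ref{lem:LVset}. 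Part (ii) is identical in both proofs.

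For (iii) there is a small confusion and a gap you should close. The appeal to Lemma~\ref{lem:condcons} and \cite[Cond.~3.8]{me21} is misplaced: those enter later, in the proof of Proposition~\ref{prop:coMo}, not here. The paper instead reapplies \cite[Lemma~3.5]{me21} directly to the fibre-average, sandwiching it (for $f\ge 0$) between the explicit ratios
\[
\frac{\int_\R f(h,r)\,(G_{N,m}^{\mathrm{min}}\!\circ J_N^{-1})(h+r\varphi)\,\de\mathcal N_N(r)}{\int_\R (G_{N,m}^{\mathrm{maj}}\!\circ J_N^{-1})(h+t\varphi)\,\de\mathcal N_N(t)}
\]
and the analogous expression with min/maj swapped, recycling the constructions from part~(i). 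Your alternative --- weak convergence via disintegration and (ii), then norm convergence $\int F_N^2\,\de\nu_N\to\int F^2\,\de\nu$ --- is viable, but ``follows from the same ingredients'' is too quick. What is actually needed is that for $\nu$-a.e.\ $h$, $h_N\to h$ in $\pi_\varphi(H)$ forces $F_N(h_N)\to F(h)$; this in turn requires the level-set fact at the disintegrated level, namely that for $\nu$-a.e.\ $h$ and $\mathcal N$-a.e.\ $r$ one has $\de z(\{z:(h+r\varphi)(z)\in J\})=0$ (which does follow from Lemma~\ref{lem:LVset} through the disintegration), before $J_N^{-1}\to\mathrm{id}$, $\mathcal N_N\to\mathcal N$ in total variation and dominated convergence in $r$ can deliver $\rho_N(h_N,r)\to\rho(h,r)$ and hence $F_N(h_N)\to F(h)$. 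Once that is spelled out, Skorokhod for $\nu_N\Rightarrow\nu$ plus bounded convergence yields the norm convergence, and weak plus norm gives strong Kuwae--Shioya convergence.
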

\begin{proof}
	(i) We first show the claim for the case $g_N^{(2)}(\cdot)=g^{(2)}(\cdot)=0$, i.e.~$g_N=g_N^{(1)}$ and $g=g^{(1)}$.
	The idea is to apply \cite[Lemma 3.5]{me21} proving that $\varrho$ is the strong limit of ${(\varrho_N)}_{N\in\N}$
	in the sense of Kuwae-Shioya within the framework of the converging Hilbert spaces $L^2(H,\mu_N)$ $\to$ $L^2(H,\mu)$. 
	In the following we use Lemma \ref{lem:tightOne:majmin} for the sequence ${(g_N)}_N$ and $g$  adopting its notation. 
	To check the assumptions of \cite[Lemma 3.5]{me21} in this case, for $N,m\in\N$ we choose
	\begin{equation*}
		G_{N,m}^{\textnormal{min}}(h):=\exp\Big({\int_{D}  g_{N,m}^{\textnormal{min}}\circ h \de z}\Big),\quad h\in H,
	\end{equation*}
	as a minorante and 
	\begin{equation*}
		G_{N,m}^{\textnormal{maj}}(h):=\exp\Big({\int_{D}  g_{N,m}^{\textnormal{maj}}\circ h \de z}\Big),\quad h\in H,
	\end{equation*}
	as a majorante for the function $G_N(h):=$ $Z_N\varrho_N$ $=\exp\Big({\int_{D}  g_N\circ h \de z}\Big)$, $h\in H$.
	
	The set of discontinuities of $g$ is a countable subset of $\R$, which we denote by $U_g$. It holds
	\begin{equation}\label{eqn:zerodis}
		\de z(\{z\in D\,|\,h(z)\in U_g\})\leq\sum_{a\in U_g}\de z(\{z\in D\,|\,h(z)=a\})=0\quad\de \mu(h)\text{-a.e.~on }H,
	\end{equation}
	since $\mu$ is a non-degenerate, centred Gaussian measure (see Lemma \ref{lem:LVset}).
	In view of \eqref{eqn:zerodis}, we have
	\begin{equation*}
		\lim_{m\to\infty}\exp\Big({\int_{D}  g_m^{\textnormal{min}}\circ h \de z}\Big)=\exp\Big({\int_{D}  g\circ h \de z}\Big)\quad \de \mu(h)\text{-a.e.~on }H,
	\end{equation*}
	and likewise
	\begin{equation*}
		\lim_{m\to\infty}\exp\Big({\int_{D}  g_m^{\textnormal{maj}}\circ h \de z}\Big)=\exp\Big({\int_{D}  g\circ h \de z}\Big)\quad  \de \mu(h)\text{-a.e.~on }H,
	\end{equation*}
	by Lebesgue's dominated convergence. Another use of Lebesgue's dominated convergence yields
	\begin{equation}\label{eqn:tightOne:13}
		\lim_{m\to\infty}\int_H\Big|\exp\Big({\int_{D}  g_m^{\textnormal{min}}\circ h \de z}\Big)-\exp\Big({\int_{D}  g\circ h \de z}\Big)\Big|^2\de\mu(h)=0,
	\end{equation}
	respectively
	\begin{equation}\label{eqn:tightOne:14}
		\lim_{m\to\infty}\int_H\Big|\exp\Big({\int_{D}  g_m^{\textnormal{maj}}\circ h \de z}\Big)-\exp\Big({\int_{D}  g\circ h \de z}\Big)\Big|^2\de\mu(h)=0.
	\end{equation}
	Moreover, Lemma \ref{lem:tightOne:majmin} provides
	\begin{equation}\label{eqn:kowloon1}
		\sup_{h\in H}\Big|G_{N,m}^{\textnormal{min}}(h)
		-\exp\Big({\int_{D}  g_m^{\textnormal{min}}\circ h \de z}\Big)\Big|\overset{N\to\infty}{\longrightarrow}0,
	\end{equation}
	as well as
	\begin{equation}\label{eqn:kowloon2}
		\sup_{h\in H}\Big|G_{N,m}^{\textnormal{maj}}(h)
		-\exp\Big({\int_{D}  g_m^{\textnormal{maj}}\circ h \de z}\Big)\Big|\overset{N\to\infty}{\longrightarrow}0
	\end{equation}
	for fixed $m\in\N$.
	The convergence of \eqref{eqn:kowloon1} and \eqref{eqn:kowloon2} is stronger than the (strong) convergence w.r.t.~the Kuwae-Shioya topology within the framework of converging Hilbert spaces $L^2(H,\mu_N)$ $\to$ $L^2(H,\mu)$. Therefore, \eqref{eqn:kowloon1}, \eqref{eqn:kowloon2}, \eqref{eqn:tightOne:13} and \eqref{eqn:tightOne:14} together with \cite[Lemma 3.5]{me21} imply that
	$\exp\Big({\int_{D}  g\circ h \de z}\Big)$, $h\in H$, is the strong limit of ${(G_N)}_{N\in \N}$ in sense of Kuwae-Shioya. This in particular implies $Z_N\to Z$ and therefore also $\varrho_N\to\varrho$ in sense of Kuwae-Shioya. 
	
	The general case, in which
	\begin{equation*}
		g=g^{(1)}-g^{(2)},\quad g_N=g_N^{(1)}-g_N^{(2)},
	\end{equation*}
	 can be handled analogously. The assumptions of \cite[Lemma 3.5]{me21} are verified via
	Lemma \ref{lem:tightOne:majmin} through the same arguments as above. In this case, for $N,m\in\N$ we choose
	\begin{equation*}
		G_{N,m}^{\textnormal{min}}(h):=\exp\Big({\int_{D}  g_{N,m}^{(1)\textnormal{min}}\circ h \de z}\Big)
		\exp\Big(-{\int_{D}  g_{N,m}^{(2)\textnormal{max}}\circ h \de z}\Big),\quad h\in H,
	\end{equation*}
	as a minorante and 
	\begin{equation*}
		G_{N,m}^{\textnormal{maj}}(h):=\exp\Big({\int_{D}  g_{N,m}^{(1)\textnormal{max}}\circ h \de z}\Big)
		\exp\Big(-{\int_{D}  g_{N,m}^{(2)\textnormal{min}}\circ h \de z}\Big),\quad h\in H,
	\end{equation*}
	as a majorante for the function $G_N(h):=$ $Z_N\varrho_N$ $=\exp\Big({\int_{D}  g_N \de z}\Big)$, $h\in H$.
	The arguments are analogous. This concludes the proof of (i).
	
	(ii) As $\tilde\varrho_N\tilde\mu_N$ $=$ $(\varrho_N\mu_N)\circ J_N^{-1}$ for $N\in\N$, the statement of (ii) is an immediate consequence of
	\begin{equation*}
		\sup_{\substack{h\in H\\|h|_H=1}}|J_Nh-h|_H\overset{N\to\infty}{\longrightarrow}0,
	\end{equation*}
	which is shown in Remark \ref{rem:strOpN}.
	
	(iii) The proof of (iii) is very similar to that in part (i). 
	Again, we apply \cite[Lemma 3.5]{me21}.
	Here, we additionally assume that $f$ is non-negative in the first step.
	We adopt the  terminology from part (i) of this proof.
	As a suitable minorante to prove the convergence of \eqref{eqn:target3} we choose the functions
	\begin{equation*}
		\pi_\varphi(H)\ni h\;\;\longmapsto\;\;F_{N,m}^\textnormal{min}(h):=\frac{\int_\R f(h+r\varphi)\big(G_{N,m}^\textnormal{min}\circ J_N^{-1}\big){(h+r\varphi)}\de \mathcal N_N(r)}
		{\int_\R\big(G_{N,m}^\textnormal{maj}\circ J_N^{-1}\big){(h+t\varphi)}\de \mathcal N_N(t)}
	\end{equation*} 
	for $N,m\in\N$.
	A majorante is provided by
	\begin{equation*}
		\pi_\varphi(H)\ni h\;\;\longmapsto\;\;F_{N,m}^\textnormal{maj}(h):=\frac{\int_\R f(h+r\varphi)\big(G_{N,m}^\textnormal{maj}\circ J_N^{-1}\big){(h+r\varphi)}\de \mathcal N_N(r)}
		{\int_\R\big(G_{N,m}^\textnormal{min}\circ J_N^{-1}\big){(h+t\varphi)}\de \mathcal N_N(t)}
	\end{equation*}
	for $N,m\in\N$. Indeed, we have
	\begin{equation*}
		F_{N,m}^\textnormal{min}(h)\leq \frac{\int_\R f(h+r\varphi)\big(G_N\circ J_N^{-1}\big){(h+r\varphi)}\de \mathcal N_N(r)}
		{\int_\R\big(G_N\circ J_N^{-1}\big){(h+t\varphi)}\de \mathcal N_N(t)}=\int_\R f(h,r)\rho_N(h,r)\de \mathcal N_N(r)
	\end{equation*}
	and also
	\begin{equation*}
		F_{N,m}^\textnormal{maj}(h)\geq \frac{\int_\R f(h+r\varphi)\big(G_N\circ J_N^{-1}\big){(h+r\varphi)}\de \mathcal N_N(r)}
		{\int_\R\big(G_N\circ J_N^{-1}\big){(h+t\varphi)}\de \mathcal N_N(t)}=\int_\R f(h,r)\rho_N(h,r)\de \mathcal N_N(r)
	\end{equation*}
	for $h\in\pi_\varphi(H)$, $N.m\in\N$. The other assumptions of \cite[Lemma 3.5]{me21} are also easily checked with similar arguments as in the proof for (i). For $m\in\N$ we have the uniform convergence of ${(F_{N,m}^\textnormal{min})}_{N\in\N}$ towards the function
	\begin{equation*}
		\pi_\varphi(H)\ni h\;\;\longmapsto\;\;F_{m}^\textnormal{min}(h):=\frac{\int_\R f(h+r\varphi)\exp\Big(\int_D g_m^\textnormal{min}{(h+r\varphi)}\Big)\de \mathcal N(r)}
		{\int_\R\exp\Big(\int_D g_m^\textnormal{maj}{(h+t\varphi)}\Big)\de \mathcal N(t)}
	\end{equation*}
	as well as the uniform convergence of ${(F_{N,m}^\textnormal{maj})}_{N\in\N}$ towards the function
	\begin{equation*}
		\pi_\varphi(H)\ni h\;\;\longmapsto\;\;F_{m}^\textnormal{min}(h):=\frac{\int_\R f(h+r\varphi)\exp\Big(\int_D g_m^\textnormal{maj}{(h+r\varphi)}\Big)\de \mathcal N(r)}
		{\int_\R\exp\Big(\int_D g_m^\textnormal{min}{(h+t\varphi)}\Big)\de \mathcal N(t)}.
	\end{equation*}
	With Lebesgue's dominated converges we conclude that 
	\begin{equation*}
	\pi_\varphi(H)\ni h\;\;\longmapsto\;\;\int_\R f(h,r)\rho(h,r)\de \mathcal N(r)=
	\frac{\int_\R f(h+r\varphi)\exp\Big(\int_D g{(h+r\varphi)}\Big)\de \mathcal N(r)}
	{\int_\R\exp\Big(\int_D g{(h+t\varphi)}\Big)\de \mathcal N(t)}
	\end{equation*}
	is the strong limit of ${(F_{m}^\textnormal{min})}_{m\in\N}$ and also the strong limit of ${(F_{m}^\textnormal{maj})}_{m\in\N}$ in $L^2(\pi_\varphi(H),\de\nu)$ as $m\to\infty$.
	 By linearity of the claimed statement in (iii), the assumption $f(\,\cdot\,)\geq 0$ can de dropped. This concludes the proof.
\end{proof}

\begin{prop}\label{prop:coMo}
	$(\E_{\varphi},\dom(\E_\varphi))$ is the limit of ${(\E^N_{\varphi_N},\dom(\E^N_{\varphi_N}))}_{N\in\N}$
	in the sense of Mosco-Kuwae-Shioya within the framework of the converging Hilbert spaces $L^2(H,\varrho_N\mu_N)$
	$\to$ $L^2(H,\varrho\mu)$.
\end{prop}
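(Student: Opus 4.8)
The plan is to straighten the reference measures by the rotations $J_N$, disintegrate them along the common direction $\varphi$, and then feed the resulting one-dimensional fibred gradient forms into the abstract Mosco criterion of \cite{me21}, whose two hypotheses have been arranged precisely as Lemma \ref{lem:condcons} and Lemma \ref{lem:tentwelve}\,(iii).

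\emph{Step 1: reduction to the straightened forms.} First I would observe that $U_N\colon L^2(H,\varrho_N\mu_N)\to L^2(H,\tilde\varrho_N\tilde\mu_N)$, $U_Nu:=u\circ J_N^{-1}$, is a unitary isomorphism, since $(\varrho_N\mu_N)\circ J_N^{-1}=\tilde\varrho_N\tilde\mu_N$, and that $\partial_\varphi(U_Nu)(h)=(\partial_{\varphi_N}u)(J_N^{-1}h)$ on $\mathcal FC_b^\infty(H)$ because $J_N$ is an isometry with $J_N\varphi_N=\varphi$; hence $U_N$ carries $(\E^N_{\varphi_N},\dom(\E^N_{\varphi_N}))$ onto $(\tilde\E^N_\varphi,\dom(\tilde\E^N_\varphi))$. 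By Remark \ref{rem:strOpN}, $J_N\to\mathrm{id}$ in operator norm, so $F\circ J_N^{-1}\to F$ locally uniformly for $F\in C_b(H)$; combined with the tightness of $(\varrho_N\mu_N)_N$ and $(\tilde\varrho_N\tilde\mu_N)_N$ from Lemma \ref{lem:tentwelve}\,(i),(ii), this yields $\|U_N(F|_{\varrho_N\mu_N})-F|_{\tilde\varrho_N\tilde\mu_N}\|_{L^2(\tilde\varrho_N\tilde\mu_N)}\to 0$. I would use this to conclude that $U_N$ is an asymptotic unitary between the pertinent Kuwae--Shioya frameworks $L^2(H,\varrho_N\mu_N)\to L^2(H,\varrho\mu)$ and $L^2(H,\tilde\varrho_N\tilde\mu_N)\to L^2(H,\varrho\mu)$, under which Mosco--Kuwae--Shioya convergence is preserved; it therefore suffices to prove $\tilde\E^N_\varphi\to\E_\varphi$ in the second framework.

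\emph{Step 2: disintegration and the abstract criterion.} Through the isometries $I_N$ and $I$, the form $\tilde\E^N_\varphi$ becomes the fibred one-dimensional gradient form $(u,v)\mapsto\int_{\pi_\varphi(H)}\int_\R u'(h,r)\,v'(h,r)\,\rho_N(h,r)\de\mathcal N_N(r)\de\nu_N(h)$ on $L^2(\pi_\varphi(H)\times\R,\rho_N(\nu_N\times\mathcal N_N))$, and $\E_\varphi$ its limit analogue. Using $\de\nu_N/\de(\tilde\mu_N\circ\pi_\varphi^{-1})(h)=\int_\R\tilde\varrho_N(h+t\varphi)\de\mathcal N_N(t)$ I would rewrite $\rho_N(\nu_N\times\mathcal N_N)$ as the density $(h,r)\mapsto\tilde\varrho_N(h+r\varphi)$ against the unweighted reference product measure $(\tilde\mu_N\circ\pi_\varphi^{-1})\times\mathcal N_N$, which is exactly the measure treated in Lemma \ref{lem:condcons}, with weak limit $(\mu\circ\pi_\varphi^{-1})\times\mathcal N$ (this uses $\tilde\mu_N\Rightarrow\mu$ and $\mathcal N_N\Rightarrow\mathcal N$). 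Then Lemma \ref{lem:condcons} supplies \cite[Cond.~3.8]{me21} for these reference measures, and Lemma \ref{lem:tentwelve}\,(iii) supplies the strong Kuwae--Shioya convergence $\int_\R f(\cdot,r)\rho_N(\cdot,r)\de\mathcal N_N(r)\to\int_\R f(\cdot,r)\rho(\cdot,r)\de\mathcal N(r)$ of the fibre weights for all $f\in C_b(\pi_\varphi(H)\times\R)$, which, by density, is precisely the remaining input of the Mosco theorem of \cite{me21}. Invoking that theorem gives both halves of Mosco--Kuwae--Shioya convergence for the fibred forms, and transporting it back through $I_N$, $I$ and $U_N$ yields the assertion.

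\emph{Expected main obstacle.} The delicate point is Step 1: one cannot avoid the rotations $J_N$, since $\varrho_N\mu_N$ has no reason to disintegrate along the fixed direction $\varphi$, so one must argue that passing to the image forms under $J_N$ alters neither the limit form nor the relevant Kuwae--Shioya framework. This rests on $J_N\to\mathrm{id}$ in operator norm (Remark \ref{rem:strOpN}) and on the uniform tightness of $(\varrho_N\mu_N)_N$ and $(\tilde\varrho_N\tilde\mu_N)_N$ (Lemma \ref{lem:tentwelve}\,(i),(ii)); once the two frameworks are identified asymptotically, transporting Mosco convergence and carrying out the disintegration of Step 2 are essentially bookkeeping over the already-established Lemmas \ref{lem:condcons} and \ref{lem:tentwelve}.
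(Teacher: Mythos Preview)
Your approach is the same as the paper's: reduce via the rotations $J_N$ to the straightened forms $(\tilde\E^N_\varphi,\dom(\tilde\E^N_\varphi))$ on $L^2(H,\tilde\varrho_N\tilde\mu_N)$, then invoke the abstract criterion \cite[Thm.~3.11]{me21} after disintegrating along $\varphi$. Step~1 is exactly the paper's equivalence argument, resting on $J_N\to\mathrm{id}$ in operator norm (Remark~\ref{rem:strOpN}) and the unitarity of $u\mapsto u\circ J_N^{-1}$.

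There is one small but genuine gap in Step~2. The hypothesis \cite[Cond.~3.8]{me21} that feeds into \cite[Thm.~3.11]{me21} must be verified for the \emph{weighted} fibred measures $\rho_N(\nu_N\times\mathcal N_N)$, not merely the unweighted Gaussians $(\tilde\mu_N\circ\pi_\varphi^{-1})\times\mathcal N_N$. Lemma~\ref{lem:condcons} only treats the latter. Lemma~\ref{lem:tentwelve}\,(iii) does give you part~(i) of \cite[Cond.~3.8]{me21} in the weighted case (strong Kuwae--Shioya convergence implies convergence of the relevant $L^2$-norms), but it does not by itself give part~(ii), which concerns uniform residual/perturbation bounds $\delta^\kappa_{1/l}$, $C^\kappa_{1/l}$ for the weighted fibre measures. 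The paper closes this via the perturbation result \cite[Lemma~3.12]{me21}, which transfers the estimates of Lemma~\ref{lem:condcons} from the unweighted to the weighted setting using that the densities $\rho_N(h,\cdot)$ are uniformly bounded above and below. You should insert this step explicitly; otherwise your invocation of the Mosco theorem is not yet justified.
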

\begin{proof}
	The claim of the proposition is equivalent to the statement that 
	$(\E_{\varphi},\dom(\E_\varphi))$ is the limit of ${(\tilde\E^N_{\varphi},\dom(\tilde\E^N_{\varphi}))}_{N\in\N}$
	in the sense of Mosco-Kuwae-Shioya within the framework of the converging Hilbert spaces $L^2(H,\tilde\varrho_N\tilde\mu_N)$
	$\to$ $L^2(H, \varrho \mu)$. 
	
	The argument for this equivalence is the following. The map $f\mapsto f\circ J_N^{-1}$ is a bijection from $C_b^1(H)$ onto itself with
	\begin{equation*}
		\frac{\partial (f\circ J_N^{-1})}{\partial\varphi}(J_Nh)=\frac{\partial f}{\partial\varphi_N}(h),\quad h\in H.
	\end{equation*}
	 So, on the one hand, under the isometric isomorphism
    \begin{equation*}
    	L^2(H,\varrho\mu_N) \to L^2(H,\tilde\varrho_N\tilde \mu_N),\quad
    	u\mapsto u\circ J_N^{-1},
    \end{equation*}
    we have
    \begin{equation*}
    	u\in \dom (\E_{\varphi_N}^N)\text{ if and only }u\circ J_N^{-1}\in\dom(\tilde\E_\varphi^N),\quad
    	\E_{\varphi_N}^N(u,u)=\tilde\E_\varphi^N(u\circ J_N^{-1},u\circ J_N^{-1}).
    \end{equation*}
    On the other hand, ${(J_N)}_{N\in\N}$ converges to the identity w.r.t.~the operator norm on $H$. For
    $u_N\in L^2(H,\varrho_N\mu_N)$ for $N\in\N$ and $u\in L^2(H,\varrho\mu)$, the strong convergence of ${(u_N)}_N$ towards $u$ in the  framework of $L^2(H,\varrho_N\mu_N)$
    $\to$ $L^2(H,\varrho\mu)$ is equivalent to the strong convergence of ${(u_N\circ J_N^{-1})}_N$ towards $u$ in the framework of $L^2(H,\tilde\varrho_N\tilde\mu_N)$
    $\to$ $L^2(H,\varrho\mu)$. The same holds regarding weak convergence.
	
	To prove Mosco convergence of ${(\tilde\E^N_{\varphi},\dom(\tilde\E^N_{\varphi}))}_{N\in\N}$ towards $(\E_{\varphi},\dom(\E_\varphi))$, we apply \cite[Thm.~3.11]{me21}.
	The only thing left to do is the verification of \cite[Cond.~3.8]{me21}. The case without disturbing density, i.e.~$\varrho_N(\cdot)=\varrho(\cdot)=1$, is settled in Lemma \ref{lem:condcons}. In the general case, 
	\cite[Cond.~3.8 (i)]{me21} is a direct consequence of Lemma \ref{lem:tentwelve} (iii), because the strong convergence in the sense of Kuwae-Shioya implies the convergence of respective $L^2$-norms. \cite[Cond.~3.8 (ii)]{me21} follows from the perturbation result stated in \cite[Lemma 3.12]{me21}.
	This concludes the proof.
\end{proof}
	
	\subsection{Convergence of Gradient forms}
In the final part of this article, we define analyse the Mosco-Kuwae-Shioya convergence of gradient forms on $H$.
We use the results of Sections \ref{sec:comp} \& \ref{sec:mococo}.

The covariance $\cov\mu(h_1,h_1)$, $ h_1,h_2\in H$,
defines an inner product on $H$ which makes $(H,\cov\mu(\cdot,\cdot))$ a pre-Hilbert space. Taking the abstract completion of $H$ w.r.t.~the norm induced by $\cov\mu$ we construct a new Hilbert space $(H_\mu,\langle\cdot,\cdot\rangle_\mu)$, in which $(H,\langle\,\cdot,\,\cdot\,\rangle)$ is densely and continuously included. The self-adjoint generator $(A,\dom(A))$ of the symmetric closed form $\langle\cdot,\cdot\rangle$ with domain $H$ on $(H_\mu,\langle\cdot,\cdot\rangle_\mu)$ is characterized by
\begin{equation*}
	H=\dom(A^{\frac{1}{2}}),\quad \langle Au,v\rangle_\mu=\langle u,v\rangle\quad \text{for }u\in \dom(A),\, v\in H.
\end{equation*}
The spectrum of $A$ is a pure point spectrum which consists of real, positive eigenvalues. We refer to the inverse $A^{-1}$, which in fact can be shown to be a trace class operator from $H$ into $H_\mu$, as the covariance operator of $\mu$.

Analogous structures exist regarding the Gaussian measure $\mu_N$ for $N\in\N$, for which we do not assume the full topological support on $H$.
The topological support $V_N:=\supp{\mu_N}$ is a linear subspace of $H$. It shall be noted at this point, that
all of the three cases are possible in this setting: 
\begin{itemize}
\item $V_N$ may be finite-dimensional subspace.
\item  $V_N$ may be a infinite-dimensional, strict subspace of $H$
\item $V_N=H$.
\end{itemize}
In any of these cases the weak convergence of ${(\mu_N)}_{N\in\N}$ towards $\mu$ implies
\begin{equation}\label{eqn:V2H}
	\lim_{N\to\infty}|p_Nh-h|_H=0,\quad h\in H,
\end{equation}
where $p_N$ denotes the orthogonal projection onto $V_N$.
The covariance of $\mu_N$ defines an inner product on $V_N$  which makes $(V_N,\cov{\mu_N}(\cdot,\cdot))$ a pre-Hilbert space. With the same completion procedure as above we obtain a Hilbert space $(V_{\mu_N},\langle\cdot,\cdot\rangle_{\mu_N})$, in which $(V_N,\langle\,\cdot,\,\cdot\,\rangle)$ is densely and continuously included. 
The self-adjoint generator $(A_N,\dom(A_N))$ of the symmetric closed form $\langle\cdot,\cdot\rangle$ with domain $V_N$ on $(H_{\mu_N},\langle\cdot,\cdot\rangle_{\mu_N})$ is characterized by
\begin{equation*}
	V_N=\dom(A_N^{\frac{1}{2}}),\quad \langle A_Nu,v\rangle_{\mu_N}=\langle u,v\rangle\quad \text{for }u\in \dom(A_N),\, v\in V_N.
\end{equation*}
Of course, if $V_N$ is finite-dimensional, then $V_N=H_{\mu_N}$ and $A_N$ is a continuous symmetric operator on $H$.
The spectrum of $A_N$ is a pure point spectrum which consists of real, positive eigenvalues. 
Until the end of this section, we use the following notation:

$0<\lambda^{(1)}\leq\lambda^{(2)}\leq\dots$ denote the Eigenvalues of $A$.
Moreover, let $\{\lambda_N^{(i)}$ $|$ $i\in I_N\}$ (ordered analogously, i.e.~$\lambda_N^{(i)}\leq \lambda_N^{(j)}$ if $i\leq j$), where either $I_N=\{1,\dots,k_N\}$, for some $k_N\in\N$, or $I_N=\N$, denote the family of Eigenvalues of $A_N$ for $N\in\N$. We fix an orthonormal basis 
$\{\eta_N^{(i)}$ $|$ $i\in I_N\}$ of $(H_{\mu_N},\langle\cdot,\cdot\rangle_{\mu_N})$ such that $\eta_N^{(i)}\in\dom( A_N)$, $A_N\eta_N^{(i)}=\lambda_N^{(i)}\eta_N^{(i)}$ for $i\in I_N$.

In the next lemma, we reformulate a well-known consequence of the weak measure convergence of ${(\mu_N)}_N$ towards $\mu$, the convergence of the spectral structures of their covariance operators. The reader should be aware though, that the exact condition to characterize the weak convergence of Gaussian measures in terms of their covariance operators in a frame as ours, is a stronger notion of convergence: the one induced by the nuclear norm. For further reading on that topic, we refer to \cite{chevet83} and \cite{baushev87}. However, the statement of the next lemma is enough for our purpose and essential to the proof of the subsequent theorem.
We set $\lambda_N^{(i)}:= \infty$ and $\varphi_N^{(i)}:=0$ for $i\in\N\setminus I_N$ and remark
that \eqref{eqn:V2H} implies  $\sup(I_N)\to\infty$ for $N\to\infty$. Hence, for each $i\in\N$ the set $\{N\in\N$ $|$ $i\notin I_N\}$ is finite.
\begin{lemma}\label{lem:baus}
	\begin{thm(ii)}
		\item $\displaystyle \lim_{N\to\infty}\lambda_N^{(i)}=\lambda^{(i)}\quad\text{for  }i\in\N.$
		\item There exists a subsequence ${(\mu_{N_l})}_{l\in\N}$ of ${(\mu_N)}_{N\in\N}$ and an orthonormal basis $\{\eta^{(1)},\eta^{(2)},\dots\}$ of $(H_\mu,\langle\cdot,\cdot\rangle_\mu)$ such that $\eta^{(i)}\in\dom( A)$, $A\eta^{(i)}=\lambda^{(i)}\eta^{(i)}$, and
		\begin{equation*}
			\lim_{l\to\infty}\eta^{N_l}_i=\eta_i\text{ strongly in }H\text{ for } i\in\N.
		\end{equation*}
	\end{thm(ii)}
\end{lemma}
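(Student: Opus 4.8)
The plan is to deduce the convergence of the spectral data from the weak measure convergence $\mu_N \Rightarrow \mu$ via convergence of the covariance quadratic forms, combined with a minimax (Courant–Fischer) characterisation of the eigenvalues $\lambda^{(i)}$ and $\lambda_N^{(i)}$ on the respective Hilbert spaces $H_{\mu}$, $H_{\mu_N}$. First I would record the basic fact that weak convergence of centred Gaussian measures on $H$ implies pointwise convergence of the covariance bilinear forms: for fixed $h_1,h_2 \in H$ the map $k \mapsto \langle h_1,k\rangle\langle h_2,k\rangle$ is continuous but only quadratically bounded, so one passes through the characteristic functionals — $\int_H e^{i\langle h,k\rangle}\,d\mu_N(k) = e^{-\frac12 \cov{\mu_N}(h,h)} \to e^{-\frac12 \cov{\mu}(h,h)}$ for every $h$, which by polarisation gives $\cov{\mu_N}(h_1,h_2) \to \cov{\mu}(h_1,h_2)$ for all $h_1,h_2 \in H$. (Uniform integrability of the quadratic functions under a weakly convergent Gaussian sequence, which follows from boundedness of fourth moments, is the technical point here, and it is "well-known" as the lemma's preamble admits.) Together with \eqref{eqn:V2H} this is the only input from the hypotheses that I would use.

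Next I would translate this into a statement about the operators $A^{-1}$, $A_N^{-1}$, which are exactly the covariance operators: $\langle A^{-1} h_1, h_2\rangle_H$ — interpreted appropriately through the inclusion $H \hookrightarrow H_\mu$ — equals $\cov{\mu}(h_1,h_2)$, and similarly $\langle A_N^{-1} p_N h_1, p_N h_2\rangle_H = \cov{\mu_N}(h_1,h_2)$. So the previous paragraph says $A_N^{-1} p_N \to A^{-1}$ in the weak operator topology on $H$ (and, combined with convergence of traces $\sum_i 1/\lambda_N^{(i)} \to \sum_i 1/\lambda^{(i)}$ coming from the same Gaussian structure — the reader is warned this is the genuinely correct hypothesis — in fact in trace norm, though weak operator convergence plus a compactness argument already suffices). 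For item (i), I would apply the Courant–Fischer minimax formula to the compact self-adjoint operators $A_N^{-1}$, $A^{-1}$: the $i$-th largest eigenvalue $1/\lambda^{(i)}$ of $A^{-1}$ is $\min_{\dim W = i-1}\max_{h \perp W, |h|_H = 1}\langle A^{-1}h,h\rangle_H$. Given a test subspace realising (almost) the max-min for $A^{-1}$, push it forward through $p_N$ (which by \eqref{eqn:V2H} is close to the identity on any fixed finite-dimensional subspace for large $N$) to get an admissible competitor for $A_N^{-1}$; this gives $\liminf_N 1/\lambda_N^{(i)} \geq 1/\lambda^{(i)}$ up to $\varepsilon$, and the reverse inequality the same way, yielding $\lambda_N^{(i)} \to \lambda^{(i)}$. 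The bookkeeping around $i \notin I_N$ is handled by the remark preceding the lemma: that set of $N$ is finite, so it does not affect limits.

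For item (ii), having fixed for each $N$ an orthonormal eigenbasis $\{\eta_N^{(i)}\}$ of $(H_{\mu_N},\langle\cdot,\cdot\rangle_{\mu_N})$, I would first observe that these vectors are bounded in $H$: indeed $|\eta_N^{(i)}|_H^2 = \langle \eta_N^{(i)}, \eta_N^{(i)}\rangle = \langle A_N \eta_N^{(i)}, \eta_N^{(i)}\rangle_{\mu_N} = \lambda_N^{(i)}$, which is bounded in $N$ by part (i). By a diagonal argument over $i \in \N$, extract a subsequence $(N_l)$ along which $\eta_{N_l}^{(i)}$ converges weakly in $H$, say to some $\eta^{(i)}$, for every $i$. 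One checks $\langle \eta^{(i)}, \eta^{(j)}\rangle_\mu = \lim_l \langle \eta_{N_l}^{(i)}, \eta_{N_l}^{(j)}\rangle_{\mu_{N_l}}$ — using $\cov{\mu_N} \to \cov{\mu}$ together with the boundedness just noted to pass the limit through — $= \delta_{ij}$, so the $\eta^{(i)}$ are orthonormal in $H_\mu$, and likewise $A\eta^{(i)} = \lambda^{(i)}\eta^{(i)}$ follows from $\langle A\eta^{(i)}, v\rangle_\mu = \langle \eta^{(i)}, v\rangle = \lim_l \langle \eta_{N_l}^{(i)}, v\rangle = \lim_l \lambda_{N_l}^{(i)} \langle \eta_{N_l}^{(i)}, v\rangle_{\mu_{N_l}} = \lambda^{(i)}\langle \eta^{(i)}, v\rangle_\mu$ for $v \in H$. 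Completeness of $\{\eta^{(i)}\}$ in $H_\mu$ is automatic once they are orthonormal eigenvectors covering every eigenvalue of $A$ (whose spectrum is pure point). Finally I would upgrade weak convergence to strong convergence in $H$: since $|\eta_{N_l}^{(i)}|_H^2 = \lambda_{N_l}^{(i)} \to \lambda^{(i)} = |\eta^{(i)}|_H^2$, weak convergence plus norm convergence gives strong convergence. The main obstacle is the careful justification of passing limits through the $H_{\mu_N}$-inner products — these live on moving spaces — which is why everything is routed through the fixed ambient space $H$ and the uniform bound $|\eta_N^{(i)}|_H^2 = \lambda_N^{(i)}$; once that is in place the argument is standard spectral perturbation theory.
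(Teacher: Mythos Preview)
Your approach is correct in outline but takes a genuinely different route from the paper. The paper does not argue via Courant--Fischer at all: it recasts the problem in the Kuwae--Shioya framework of converging Hilbert spaces, taking $(H_{\mu_N},\langle\cdot,\cdot\rangle_{\mu_N}) \to (H_\mu,\langle\cdot,\cdot\rangle_\mu)$ as the converging spaces and the quadratic form $|\cdot|_H^2$ (with domain $V_N$, respectively $H$) as the form whose spectral structure should converge. The technical work in the paper's proof is verifying the hypotheses of \cite[Cor.~2.5]{kuwae03} --- Mosco convergence and asymptotic compactness --- and the key step is showing that a sequence $u_N \in V_N$ converging \emph{weakly} in $H$ automatically converges \emph{strongly} in the Kuwae--Shioya sense; this is done via Chevet's uniform exponential integrability bound together with Arzel\`a--Ascoli on compact subsets of $H$. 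Once those hypotheses hold, the abstract corollary delivers both (i) and (ii) at once. Your argument is more elementary and self-contained; the paper's fits the surrounding Mosco-convergence machinery already in use.

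One point in your write-up must be tightened. The two directions of the minimax are \emph{not} symmetric, contrary to ``the reverse inequality the same way''. In the forward direction you push a fixed finite-dimensional test space through $p_N$, and pointwise convergence $\cov{\mu_N}\to\cov\mu$ is automatically uniform on its unit sphere. In the reverse direction (bounding $1/\lambda_N^{(i)}$ from above via $\min_{\dim W=i-1}\max_{h\perp W}$), the competitor $h$ ranges over the \emph{infinite}-dimensional set $V_N\cap W^\perp$, where pointwise convergence of covariances gives nothing uniform. The fix is exactly the trace-norm (hence operator-norm) convergence $Q_N\to Q$ you relegate to a parenthesis: it yields $|\cov{\mu_N}(h,h)-\cov\mu(h,h)|\leq\|Q_N-Q\|_{\mathrm{op}}$ uniformly for $|h|_H=1$, after which both directions (or simply Weyl's inequality) go through. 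The same issue recurs in (ii) when you pass the limit $\cov{\mu_{N_l}}(\eta_{N_l}^{(i)},\eta_{N_l}^{(j)})\to\cov\mu(\eta^{(i)},\eta^{(j)})$ with only weakly convergent $\eta_{N_l}^{(i)}$: again operator-norm convergence of $Q_N$ plus compactness of $Q$ is what justifies it. So the nuclear-norm characterisation of weak Gaussian convergence (the paper cites \cite{chevet83,baushev87} for this) is not a side remark in your argument but the load-bearing ingredient; promote it from parenthesis to premise.
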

\begin{proof}
	The statement is exactly the content of \cite[Corollary 2.5]{kuwae03}, if we understand $(H_{\mu_N},\langle\cdot,\cdot\rangle_{\mu_N})$, $N\in\N$ as a sequence of converging Hilbert spaces with asymptotic space $(H_\mu,\langle\cdot,\cdot\rangle_\mu)$.
	There is a canonical way to do this, considering the convergence of the second moments
	\begin{equation*}
		\lim_{N\to\infty}\langle p_Nh,p_nh\rangle_{\mu_N}=
		\lim_{N\to\infty}\int_{H}\langle h,k\rangle^2\de\mu_N(k)=\int_H\langle h,k\rangle^2\de\mu(k)
		=\langle h,h\rangle_\mu
	\end{equation*}
	for $h\in H$.
	Moreover, by \eqref{eqn:V2H} states
	\begin{equation}
		\langle h,h\rangle=\lim_{N\to\infty}\langle p_Nh,p_Nh\rangle
	\end{equation}
	for $h\in H$.
	So, the second criterion of \cite[Def.~2.8]{kuwae03} is fulfilled considering the quadratic form $|\cdot|_H^2$ with domain $V_N$ on the Hilbert space $(H_{\mu_N},\langle\cdot,\cdot\rangle_{\mu_N})$ for $N\in\N$
	and the asymptotic form $|\cdot|_H^2$ with domain $H$ on the limiting Hilbert space $(H_\mu,\langle\cdot,\cdot\rangle_\mu)$.
	The assumptions of \cite[Corollary 2.5]{kuwae03} require the compact convergence of the spectral structure of $A_N$ towards the one of $A$ as $N\to\infty$. We claim the following: Given $u_N\in V_N$ for $N\in\N$ and $u\in H$, then the weak convergence of ${(u_N)}_N$ towards $u$ as a sequence in $(H,\langle\cdot,\cdot\rangle)$
	implies the strong convergence of ${(u_N)}_N$ towards $u$ in the topology of Kuwae-Shioya, \cite[Def.~2.4]{kuwae03}.
	This claim would imply the first condition for Mosco convergence, as specified in \cite[Def.~2.11]{kuwae03},
	and also verify asymptotic compactness as defined in \cite[Def.~2.12]{kuwae03}.
	So, let $u_N\in V_N$ for $N\in\N$ and $u\in H$  with $u_N\rightharpoonup u$ weakly in $H$. 
	We want to prove the strong convergence of ${(u_N)}_N$ towards $u$ in the strong topology of Kuwae-Shioya within the framework of converging Hilbert spaces $(H_{\mu_N},\langle\cdot,\cdot\rangle_{\mu_N})$ $\to$ $(H_\mu,\langle\cdot,\cdot\rangle_\mu)$.
	In view of \cite[Lem.~2.3]{kuwae03} it is enough to show
	\begin{equation}\label{eqn:firstIE}
		\lim_{N\to\infty}\int_{H}\langle u_N,k\rangle^2\de\mu_N(k)=\int_H\langle u,k\rangle^2\de\mu(k)
	\end{equation}
	(convergence of norms) and 
	\begin{equation}\label{eqn:secondIE}
		\lim_{N\to\infty}\int_{H}\langle u_N,k\rangle\langle h,k\rangle\de\mu_N(k)=\int_H\langle u,k\rangle\langle h,k\rangle\de\mu(k)
	\end{equation}
	for $h\in H$, as \eqref{eqn:secondIE} implies convergence w.r.t.~the weak topology of Kuwae-Shioya (see \cite[Def.~2.5]{kuwae03} in combination with \cite[Lem.~2.3]{kuwae03}).
	By \cite[Theorem 1]{chevet83} there exists $a\in(0,\infty)$ such that
	\begin{equation}\label{eqn:chevet}
		\sup_{N\in\N}\int_H\exp(a|k|_H^2)\de\mu_N(k)<\infty
	\end{equation}
	Let $\varepsilon>0$ and $h\in H$ be fixed.
	Because of \eqref{eqn:chevet} and the boundedness of ${(u_N)}_N$ in $H$, there exists $R\in(0,\infty)$ with
	\begin{equation*}
		\sup_{N\in\N}\int_{\{k\in H\,|\,|k|_H>R\}}|k|_H^2\de\mu_N(k)<\frac{\varepsilon}{\sup_{N\in\N}(|u_N|_H^2+|u_N|_H|h|_H)}.
	\end{equation*}
	In particular,
	\begin{equation*}
		\sup_{N\in\N}\int_{\{k\in H\,|\,|k|_H>R\}}\langle u_N,k\rangle^2\de\mu_N(k)<\varepsilon
	\end{equation*}
	as well as
	\begin{equation*}
			\sup_{N\in\N}\int_{\{k\in H\,|\,|k|_H>R\}}\big|\langle u_N,k\rangle\langle h,k\rangle\big|\de\mu_N(k)<\varepsilon.
	\end{equation*}
	Now, due to the weak measure convergence $\mu_N\to\mu$ and a 
	 $\varepsilon/3$-argument (see \cite[Theorem 2.2]{yang11}),
	 the integrals in \eqref{eqn:firstIE} and \eqref{eqn:secondIE} converge as desired if
	 the integrands converge uniformly on compact sets. 
	 This, however, is immediate from the weak convergence of ${(u_N)}_N$ towards $u$ in $H$.
	 Indeed, for an arbitrary compact  set $K\subset H$, the family $\{K\ni k\mapsto \langle u_N,k\rangle$ $|$ $N\in\N\}$ is equicontinuous and converges pointwisely to 
	 $K\ni k\mapsto \langle u,k\rangle$. Hence, the Theorem of Arzelà-Ascoli shows
	 \begin{equation*}
	 	\sup_{k\in K}\big|\langle u_N,k\rangle-\langle u,k\rangle\big|\overset{N\to\infty}{\longrightarrow}0.
	 \end{equation*}
     This concludes the proof.
\end{proof}

\begin{remark}\label{rem:remarki}
	\begin{thm(ii)}
    \item The only assumption of Lemma \ref{lem:baus} is the weak measure convergence of Gaussian measures on $H$. So, Item (ii) of that Lemma can also be read in the sense that for every subsequence of ${(\mu_N)}_{N\in\N}$ there exists a sub-subsequence with the stated properties.
	\item Let $i\in\N$.  We define 
	\begin{equation*}
	\varphi^{(i)}:=\frac{\eta^{(i)}}{\sqrt{\lambda^{(i)}}}\quad\text{and}\quad
	\varphi^{(i)}_N:=\frac{\eta_N^{(i)}}{\sqrt{\lambda_N^{(i)}}}
	\end{equation*}
	for all $N\in\N$ large enough such that $i\in I_N$.
	Those elements are normalized in $(H,\langle\cdot,\cdot\rangle)$, $|\varphi^{(i)}_N-\varphi^{(i)}|_H\to$ as $N\to\infty$ by Lemma \ref{lem:baus} and moreover 
	\begin{equation*}
	\lambda^{(i)}\cov\mu(\varphi^{(i)},h)=\langle A\varphi^{(i)},h\rangle_\mu=\langle\varphi^{(i)},h\rangle
	\end{equation*}
	for $h\in H$. Similarly, for $N\in\N$ large enough such that $i\in I_N$, we have
	\begin{align*}
		\lambda_N^{(i)}\cov{\mu_N} (\varphi_N^{(i)},h)&=\lambda_N^{(i)}\cov{\mu_N} (\varphi_N^{(i)},p_Nh)
		=\langle A_N\varphi_N^{(i)},p_Nh\rangle_{\mu_N}\\&=\langle \varphi_N,v\rangle=\langle \varphi_N,v+w\rangle
		=\langle \varphi_N^{(i)},p_nh\rangle=\langle \varphi_N^{(i)},h\rangle
	\end{align*}	
	for $h\in H$.
	Hence, Proposition \ref{prop:coMo} can be applied and 
	\begin{equation*}
	\big(\E^N_{\varphi^{(i)}_N},\dom\big(\E^N_{\varphi^{(i)}_N}\big)\big)\longrightarrow(\E_{\varphi^{(i)}},\dom(\E_{\varphi^{(i)}}))
	\end{equation*}
	in the sense of Mosco-Kuwae-Shioya.
	\end{thm(ii)}
\end{remark}

The infinite sum
\begin{align*}
	\E(u,v)&:=\sum_{i=1}^\infty\E_{\varphi^{(i)}}(u,v)\quad\text{for }u,v\text{ from the domain }\\
	\dom(\E)&:=\Big\{w\in \bigcap_{i\in\N}\dom(\E_{\varphi^{(i)}})\,\Big|\,\sum_{i=1}^\infty\E_{\varphi^{(i)}}(w,w)<\infty\Big\}
\end{align*}
defines a symmetric Dirichlet form on $L^2(H,\varrho\mu)$.

\begin{lemma}
		For  $u\in\dom(\E)$ there exists a weak gradient $\grad u\in L^2(H,H,\mu)$ which is characterized by
		\begin{equation*}
			\langle \eta,\grad u(h)\rangle=\lim_{t\downarrow 0}t^{-1}(u(h+t\eta)-u(h))
		\end{equation*}
		for $\mu$-a-e-~$h\in H$ and $\eta\in\dom(A)$. The limit on the right-hand side exists in $\mu$-a.e.~sense. It holds
		\begin{equation*}
			\E(u,v)=\int_H\langle \grad u,\grad v\rangle\varrho\de\mu
		\end{equation*}
		Moreover, $\mathcal F C_b^\infty(H)$ is dense in $(\dom(\E),\E_1)$.
\end{lemma}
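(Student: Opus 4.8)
The plan is to reduce the assertion to the case of the pure Gaussian reference measure $\mu$ (density $1$), where it is the standard description of a gradient Dirichlet form on a Gaussian space, and then transfer back. Since $g$ is bounded and $D$ has finite volume, $\varrho$ satisfies $c_1\le\varrho\le c_2$ for some $0<c_1\le1\le c_2<\infty$; hence $L^2(H,\varrho\mu)=L^2(H,\mu)$ and $L^2(H,H,\varrho\mu)=L^2(H,H,\mu)$ with equivalent norms, and on $\mathcal FC_b^\infty(H)$ the directional form $\E_{\varphi^{(i)}}$ built with $\varrho\mu$ and the corresponding form $\E^0_{\varphi^{(i)}}$ built with $\mu$ obey $c_1\E^0_{\varphi^{(i)}}\le\E_{\varphi^{(i)}}\le c_2\E^0_{\varphi^{(i)}}$. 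Being closures of comparable closable pre-forms on the common core $\mathcal FC_b^\infty(H)$ in Hilbert spaces with equivalent norms, $\E_{\varphi^{(i)}}$ and $\E^0_{\varphi^{(i)}}$ share their domain, with comparable graph forms $\E_{\varphi^{(i)},1}$, $\E^0_{\varphi^{(i)},1}$ there; summing over $i$ yields $\dom(\E)=\dom(\E^0)$ with $c_1\E^0\le\E\le c_2\E^0$ and equivalent graph norms, where $\E^0:=\sum_i\E^0_{\varphi^{(i)}}$. It therefore suffices to establish the core property for $\E^0$. The elementary half is $\mathcal FC_b^\infty(H)\subset\dom(\E)$: for $u=g_0(\langle f_1,\cdot\rangle,\dots,\langle f_m,\cdot\rangle)$ the map is Fréchet differentiable with bounded gradient $\grad u=\sum_j\partial_jg_0(\dots)f_j$, and since $\{\varphi^{(i)}\}$ is an orthonormal basis of $H$ diagonalising $A$ (Remark~\ref{rem:remarki}), Parseval gives $\sum_i(\partial_{\varphi^{(i)}}u)^2=|\grad u|^2$, so $\sum_i\E_{\varphi^{(i)}}(u,u)=\int_H|\grad u|^2\varrho\de\mu<\infty$ and $\E(u,v)=\int_H\langle\grad u,\grad v\rangle\varrho\de\mu$ on $\mathcal FC_b^\infty(H)$.

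The heart of the matter is the density of $\mathcal FC_b^\infty(H)$ in $\dom(\E^0)$ for the graph norm. Because the $\varphi^{(i)}$ are eigen-directions of $\mu$, the coordinates $\xi_i:=\langle\varphi^{(i)},\cdot\rangle$ are independent centred Gaussians under $\mu$ and generate $\mathcal B(H)$ modulo $\mu$-null sets (the $\varphi^{(i)}$ being total in $H$). Let $\pi_n$ be the $\mu$-conditional expectation onto $\mathcal F_n:=\sigma(\xi_1,\dots,\xi_n)$. For $u\in\dom(\E^0)$ one checks that $\pi_nu\in\dom(\E^0)$ with $\partial_{\varphi^{(j)}}\pi_nu=\pi_n\partial_{\varphi^{(j)}}u$ for $j\le n$ (differentiating along $\varphi^{(j)}$ commutes with integrating out the remaining, independent coordinates) and $\partial_{\varphi^{(j)}}\pi_nu=0$ for $j>n$, so that $\E^0(\pi_nu,\pi_nu)=\sum_{j=1}^n\int_H(\pi_n\partial_{\varphi^{(j)}}u)^2\de\mu\le\E^0(u,u)$. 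Since $\pi_nu\to u$ in $L^2(H,\mu)$ by $L^2$-martingale convergence, $(\pi_nu)_n$ is bounded in $\dom(\E^0)$, converges to $u$ weakly there, hence by lower semicontinuity $\E^0_1(\pi_nu,\pi_nu)\to\E^0_1(u,u)$, and weak convergence plus convergence of norms forces $\pi_nu\to u$ in the graph norm. It remains to approximate an $\mathcal F_n$-measurable $u=v(\xi_1,\dots,\xi_n)\in\dom(\E^0)$, which amounts to $v\in W^{1,2}(\R^n,\gamma_n)$ for a non-degenerate Gaussian $\gamma_n$; mollification and cut-off give $v_k\in C_b^\infty(\R^n)$ with $v_k\to v$ in $W^{1,2}(\R^n,\gamma_n)$, so $u_k:=v_k(\xi_1,\dots,\xi_n)\in\mathcal FC_b^\infty(H)$ converges to $u$ for the graph norm. (Alternatively, this core property is classical for gradient forms over infinite-dimensional Gaussian measures; cf.\ \cite{albeverio90,ROCKNER19921,Ro2zhu}.) Through the reduction above, $\mathcal FC_b^\infty(H)$ is then dense in $(\dom(\E),\E_1)$, and $\dom(\E)$ equals the Gaussian $1,2$-Sobolev space over $\mu$. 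This finite-dimensional reduction (that $\pi_n$ preserves $\dom(\E^0)$, commutes with the eigen-directional derivatives in the stated range, and contracts $\E^0$) is the one genuinely delicate point; everything else is soft Hilbert-space functional analysis or a routine finite-dimensional computation.

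Finally, for $u\in\dom(\E)$ choose $u_k\in\mathcal FC_b^\infty(H)$ with $u_k\to u$ for the graph norm of $\E$; then $(\grad u_k)_k$ is Cauchy in $L^2(H,H,\mu)$ (using $\E(w,w)\ge c_1\int_H|\grad w|^2\de\mu$ on $\mathcal FC_b^\infty(H)$), so $\grad u_k$ converges to some $\grad u\in L^2(H,H,\mu)$, independent of the approximating sequence, and letting $k\to\infty$ in $\E(u_k,v_k)=\int_H\langle\grad u_k,\grad v_k\rangle\varrho\de\mu$ gives $\E(u,v)=\int_H\langle\grad u,\grad v\rangle\varrho\de\mu$. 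For $\eta\in\dom(A)$ the vector $\eta$ lies in the Cameron-Martin space of $\mu$, hence is an admissible direction and $\mu$ is $\eta$-quasi-invariant with $L^2$-bounded densities; moreover $\E_\eta(u,u)\le|\eta|_H^2\,\E(u,u)<\infty$ and the $u_k$ converge to $u$ in the $\eta$-directional graph norm, so $u\in\dom(\E_\eta)$. By the disintegration description of $\E_\eta$ (cf.\ the remark preceding Section~\ref{sec:mococo} and \cite{sheng92}), $u$ has a version that is absolutely continuous along $\mu$-almost every line $r\mapsto h+r\eta$, with line-derivative $r\mapsto\langle\eta,\grad u(h+r\eta)\rangle$ (checked on $\mathcal FC_b^\infty(H)$ and propagated by density); Lebesgue differentiation of this absolutely continuous profile then yields $\lim_{t\downarrow0}t^{-1}(u(h+t\eta)-u(h))=\langle\eta,\grad u(h)\rangle$ for $\mu$-a.e.\ $h$, which is exactly the stated characterization.
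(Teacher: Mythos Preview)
Your argument is correct and follows essentially the same strategy as the paper: both use that $\varrho$ is bounded above and below by positive constants to reduce to the pure Gaussian case, which the paper then dispatches by citing \cite[Thm.~1.9]{sheng92} and \cite[Chap.~II, Thm.~4.3.2]{bouleau91}, whereas you spell out a self-contained proof via conditional expectations onto finite coordinate $\sigma$-algebras, martingale convergence, and finite-dimensional mollification. Your write-up is more detailed but tracks the same conceptual route; the delicate commutation $\partial_{\varphi^{(j)}}\pi_n=\pi_n\partial_{\varphi^{(j)}}$ for $j\le n$ that you flag is precisely what the cited results encode.
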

\begin{proof}
	  The statement is a consequence of \cite[Thm.~1.9]{sheng92} together with \cite[Chap.~II, Thm.~4.3.2.]{bouleau91} an the fact that $\varrho$ is bounded from below and above by positive constants.
\end{proof}

For $N\in\N$ let $(\E^N,\dom(\E^N))$ denote the symmetric Dirichlet form on $L^2(H,\varrho_N\mu_N)$ which is the closure of
\begin{equation*}
	\E^N(u,v):=\int_H\langle p_N\grad u,p_N\grad v\rangle\de \mu_N\quad
	 \text{for }u,v\text{ from the pre-domain } \mathcal FC_b^\infty(H).
\end{equation*}

\begin{thm}\label{thm:maiMO}
	If Condition \ref{condi:last} holds, then
	$(\E,\dom(\E))$ is the limit of ${(\E^N,\dom(\E^N))}_{N\in\N}$
	in the sense of Mosco-Kuwae-Shioya within the framework of the converging Hilbert spaces $L^2(H,\varrho_N\mu_N)$
	$\to$ $L^2(H,\varrho\mu)$.
\end{thm}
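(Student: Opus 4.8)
The plan is to reduce the Mosco convergence of the full gradient forms $(\E^N,\dom(\E^N))$ to the already-established Mosco convergence of the one-dimensional component forms $(\E^N_{\varphi^{(i)}_N},\dom(\E^N_{\varphi^{(i)}_N}))$ from Proposition \ref{prop:coMo} (applied via Remark \ref{rem:remarki}), using the spectral machinery of Lemma \ref{lem:baus}. The key structural observation is that, for each $N$, the form $\E^N$ decomposes along the eigendirections of the covariance operator $A_N$ as $\E^N(u,v)=\sum_{i\in I_N}\E^N_{\varphi^{(i)}_N}(u,v)$ on the core $\mathcal FC_b^\infty(H)$, because $\{\varphi^{(i)}_N\mid i\in I_N\}$ is an orthonormal basis of $(V_N,\langle\,\cdot\,,\,\cdot\,\rangle)$ (after normalization) and $p_N\grad u=\sum_{i\in I_N}\langle \grad u,\varphi^{(i)}_N\rangle\varphi^{(i)}_N$; the limiting form $\E=\sum_{i=1}^\infty\E_{\varphi^{(i)}}$ has the analogous decomposition. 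So the task is to pass from termwise Mosco convergence of a convergent series of Dirichlet forms to Mosco convergence of the sum.

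First I would make the spectral picture rigorous: by Remark \ref{rem:remarki} (i) it suffices to prove the $\liminf$ and recovery-sequence conditions along an arbitrary subsequence, so I pass to a sub-subsequence on which Lemma \ref{lem:baus} (ii) holds, i.e. $\lambda^{(i)}_N\to\lambda^{(i)}$ and $\eta^{(i)}_N\to\eta^{(i)}$ strongly in $H$, hence $\varphi^{(i)}_N\to\varphi^{(i)}$ strongly in $H$ for each $i$; this is exactly the input needed by Proposition \ref{prop:coMo} for each fixed $i$. Next I would establish the \textbf{$\liminf$ inequality}: given $u_N\to u$ strongly in the framework $L^2(H,\varrho_N\mu_N)\to L^2(H,\varrho\mu)$, I must show $\E(u,u)\le\liminf_N\E^N(u_N,u_N)$. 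For each fixed $M\in\N$, the truncated form $\sum_{i=1}^M\E^N_{\varphi^{(i)}_N}$ is a finite sum, and each summand Mosco-converges by Proposition \ref{prop:coMo}; a finite sum of Mosco-convergent forms is Mosco-convergent (the $\liminf$ of a finite sum of nonnegative lower-semicontinuous-in-limit quantities is bounded below by the sum of the individual $\liminf$'s, by Fatou-type reasoning once one reduces to a common liminf-extracting subsequence), so $\sum_{i=1}^M\E_{\varphi^{(i)}}(u,u)\le\liminf_N\sum_{i=1}^M\E^N_{\varphi^{(i)}_N}(u_N,u_N)\le\liminf_N\E^N(u_N,u_N)$, where the last step uses nonnegativity of the discarded terms. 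Letting $M\to\infty$ and using the definition of $\dom(\E)$ gives the claim (and forces $u\in\dom(\E)$ whenever the right side is finite).

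The more delicate half is the \textbf{existence of a recovery sequence}: for $u\in\dom(\E)$ I must produce $u_N\to u$ strongly with $\limsup_N\E^N(u_N,u_N)\le\E(u,u)$. Since $\mathcal FC_b^\infty(H)$ is a form core for $(\E,\dom(\E))$, by a diagonal argument it suffices to treat $u\in\mathcal FC_b^\infty(H)$, say $u(h)=g(\langle h,f_1\rangle,\dots,\langle h,f_m\rangle)$. The natural candidate is simply $u_N:=u$ itself (viewed in $L^2(H,\varrho_N\mu_N)$), whose strong convergence follows from $\varrho_N\mu_N\Rightarrow\varrho\mu$ (Lemma \ref{lem:tentwelve} (i)) together with continuity and boundedness of $u$; then $\E^N(u,u)=\sum_{i\in I_N}\int_H\langle\grad u,\varphi^{(i)}_N\rangle^2\varrho_N\de\mu_N=\int_H|p_N\grad u|_H^2\varrho_N\de\mu_N$, and one needs $\limsup_N\int_H|p_N\grad u|_H^2\varrho_N\de\mu_N\le\int_H|\grad u|_H^2\varrho\de\mu=\E(u,u)$. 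Here $\grad u(h)=\sum_{k}\partial_k g(\cdots)f_k$ has values in a fixed finite-dimensional subspace of $H$, so $|p_N\grad u(h)|_H\le|\grad u(h)|_H$ is bounded uniformly, $p_N\grad u\to\grad u$ by \eqref{eqn:V2H}, and the integrand converges boundedly and continuously in $h$; combined with $\varrho_N\mu_N\Rightarrow\varrho\mu$ this yields convergence (not just $\limsup\le$), which is more than enough. The main obstacle I anticipate is not any single estimate but the bookkeeping of the varying-Hilbert-space framework: one must check carefully that ``$u_N=u$'' is admissible (i.e. $u\in\dom(\E^N)$, which holds since $u\in\mathcal FC_b^\infty(H)$, a common pre-domain), that the notion of strong convergence in the Kuwae-Shioya sense is compatible with these plain $L^2$-constructions under $\varrho_N\mu_N\Rightarrow\varrho\mu$, and that the interchange of ``$\sum_i$'' with ``$\liminf_N$'' in the $\liminf$-half is justified only after the reduction to finitely many components plus a monotone passage $M\to\infty$. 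Once these points are settled, the theorem follows by assembling the two halves.
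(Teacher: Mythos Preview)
Your approach is essentially the paper's: decompose $\E^N=\sum_{i\in I_N}\E^N_{\varphi^{(i)}_N}$ and $\E=\sum_i\E_{\varphi^{(i)}}$ along eigendirections, invoke Proposition~\ref{prop:coMo} termwise (after passing to the sub-subsequence of Lemma~\ref{lem:baus} via Remark~\ref{rem:remarki}), truncate to finitely many components for the $\liminf$ half and pass to the limit in the truncation, and for the recovery half take $u_N=u\in\mathcal FC_b^\infty(H)$ together with $|p_N\grad u|_H\le|\grad u|_H$ and $\varrho_N\mu_N\Rightarrow\varrho\mu$. The paper packages the recovery step through the abstract criterion of \cite[Lem.~2.1(7)]{kuwae03} (convergence of the form-domain Hilbert spaces), but your direct diagonal argument via the common core $\mathcal FC_b^\infty(H)$ amounts to the same thing.

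One slip to fix: the Mosco $\liminf$ condition must be verified for \emph{weakly} convergent sequences $u_N\to u$ in the Kuwae--Shioya sense, not strongly convergent ones as you wrote. Your argument still goes through verbatim under this correct hypothesis, since Proposition~\ref{prop:coMo} gives full Mosco convergence (in particular the weak-$\liminf$ inequality) for each component form, and superadditivity of $\liminf$ handles the finite sum; but as written you have stated the wrong hypothesis for what needs to be proved.
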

\begin{proof}
	Let $u\in L^2(H,\varrho\mu)$ and $u_N\in L^2(H,\varrho_N\mu_N)$ for $N\in\N$ such that $u$ is the weak limit of $u$ in the sense of Kuwae-Shioya. We have to show
	\begin{equation*}
		\E(u,u)\leq\liminf_{N\to\infty}\E^N(u_N,u_N),
	\end{equation*}
	where $\E$ and $\E^N$ are extended as quadratic forms, assigning the value $\infty$ outside the domain of the respective Dirichlet form.
	It holds
	\begin{equation*}
		\E^{N}(u_N,u_N)=\sum_{i\in I_N}\E^N_{\varphi_N^{(i)}}(u_N,u_N)
	\end{equation*}
	extending the quadratic form associated with \vspace{2pt}$\big(\E^N_{\varphi_N^{(i)}},\dom(\E^N_{\varphi_N^{(i)}}\big)\big)$ in the same way, since $\varphi_N^{(i)}$, $i\in I_N$, is an orthonormal basis of $V_N$.
	We formally set \vspace{2pt}$\E^N_{\varphi^{(i)}_N}(\cdot,\cdot):=\infty$ on $L^2(H,\varrho_N\mu_N)$ if $i\in\N\setminus I_N$ and remark that for every $i\in\N$ the set $\{N\in\N$ $|$ $i\in\N\setminus I_N\}$ is finite.
	As a consequence of Proposition \ref{prop:coMo}, respectively Remark \ref{rem:remarki} (ii), we have
	\begin{align*}
		\E(u,u)&=\lim_{N'\to\infty}\sum_{i\in I_{N'}}\E_{\varphi^{(i)}}(u,u)\leq
		\liminf_{N'\to\infty}\Big(\sum_{i\in I_{N'}}\liminf_{N\to\infty}\E^N_{\varphi_N^{(i)}}(u_N,u_N)\Big)\\
		&\leq\liminf_{N'\to\infty}\liminf_{N\to\infty}\Big(\sum_{i\in I_{N'}}
		\E^N_{\varphi_N^{(i)}}(u_N,u_N)\Big)\\
		&\leq \liminf_{N\to\infty}\Big(\sum_{i\in I_{N}}\E^N_{\varphi_N^{(i)}}(u_N,u_N)\Big)
		=\liminf_{N\to\infty}\E^N(u_N,u_N)
	\end{align*}
	as desired.
	
	In view of \cite[Lem.~2.1(7)]{kuwae03}, the claim of this theorem follows if 
	$(\dom(\E^N),\E^N_1)$, $N\in\N$, form a sequence of converging Hilbert spaces with asymptotic element 
	$(\dom(\E),\E_1)$.
	Since
	\begin{equation*}
		\E(u,u)=\lim_{N\to\infty}\int_H\langle \grad u,\grad v\rangle\de \mu_N\geq\limsup_{N\to\infty}\E^N(u,u)
	\end{equation*}
	for $u\in\mathcal F C_b^\infty(H)$, we must have 
	\begin{equation*}
		\E(u,u)=\lim_{N\to\infty}\E^N(u,u).
	\end{equation*}
	This concludes the proof.
\end{proof}

\begin{corollary}\label{cor:mai}
	Let $g_N, g$ be functions of bounded variation, as in Condition \ref{condi:last}. For $N\in\N$ let ${(X^N_t)}_{t\geq 0}$ be the finite-dimensional system of skew interacting Brownian motions \eqref{eqn:skewIB}, defined in Example \ref{exa:moveto},
	and $({X_t)}_{t\geq 0}$ be the diffusion process of Remark \ref{rem:remmi} with Röckner-Zhu-Zhu decomposition \eqref{eqn:RZZ}. Through the argument of Remark \ref{rem:Argu}, we have shown the weak measure convergence of the equilibrium law of 
	\begin{equation*}
		u^N_t:=\Lambda_N X^{N}_{N^2t},\quad t\geq 0,
	\end{equation*}
	towards the equilibrium law of $({X_t)}_{t\geq 0}$ as $N\to\infty$. The topological space to which the term of weak measure convergence refers in this context is $C([0,\infty),H_0')$ (see Condition \ref{condi:tighti}). The statement  holds for any height map $\Lambda_N$ as in \eqref{eqn:heightDef}, if $\Xi$ meets Condition \ref{condi:Xi}.
\end{corollary}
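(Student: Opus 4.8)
The plan is to assemble the corollary from the machinery of Sections \ref{sec:tightness} and \ref{sec:Mokoko}, with essentially no new computation. The starting point is Remark \ref{rem:Argu}: by Proposition \ref{prp:tightness} and the finite-dimensional-distribution identity \eqref{eqn:finidi}, it suffices to prove
\begin{enumerate}
\item[(a)] $m_{A_N}\circ\Lambda_N^{-1}\Rightarrow\rho\mu$ in the sense of weak measure convergence on $H$, and
\item[(b)] that the image under $\Lambda_N$ of the form $N^2\E^N$ converges to $(\E,\dom(\E))$ in the sense of Mosco-Kuwae-Shioya, within the framework of the converging Hilbert spaces $L^2\big(H,m_{A_N}\circ\Lambda_N^{-1}\big)\to L^2(H,\rho\mu)$.
\end{enumerate}
By Theorem \ref{thm:HeightMaps} and Corollary \ref{thm:Q1dyn} neither (a) nor (b) depends on which $\Xi$ is used in \eqref{eqn:heightDef}, provided $\Xi$ satisfies Condition \ref{condi:Xi}; so I would specialize once and for all to $\Xi:=\eins_{[-1,0)^d}$ and recover the general case only at the end.

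For $\Xi=\eins_{[-1,0)^d}$ the map $N\Lambda_N$ is an isometry from $\R^{k_N}$ onto $V_N:=\Lambda_N(\R^{k_N})\subset H$, and, as carried out in Remark \ref{rem:Argu}, the pushed-forward reference measure splits as $\de\big(m_{A_N}\circ\Lambda_N^{-1}\big)(h)=\varrho_N(h)\,\de\mu_N(h)$, with $\mu_N:=\mu_{A_N}\circ\Lambda_N^{-1}$ the centred Gaussian carried by $V_N$ and $\varrho_N$ the normalized density built from $g_N$ — these are exactly the data $\mu_N,\varrho_N$ appearing in Section \ref{sec:Mokoko}; simultaneously the image of $N^2\E^N$ under $\Lambda_N$ is, up to the normalizing constant fixed by the choice of time scale, the gradient-type form $(\E^N,\dom(\E^N))$ on $L^2(H,\varrho_N\mu_N)$ introduced just before Theorem \ref{thm:maiMO}. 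Granting the standing hypotheses of Section \ref{sec:Mokoko} — the Gaussian weak convergence $\mu_N\Rightarrow\mu$, which in the concrete interface models is supplied by the cited scaling theorems (e.g.\ \cite{cipriani20} in the situation of Examples \ref{exa:sfPoly} and \ref{exa:ZambCip}), together with Condition \ref{condi:last} on $(g_N)_N$ and $g$, which is the hypothesis of the corollary — statement (a) is then precisely Lemma \ref{lem:tentwelve}(i) (which gives $\varrho_N\mu_N\Rightarrow\varrho\mu$, the reference measure $\rho\mu$ of Remark \ref{rem:remmi}), and statement (b) is precisely Theorem \ref{thm:maiMO}. I would also record in passing that the form of Example \ref{exa:moveto} satisfies Condition \ref{condi:tighti}(ii) with $\Gamma_N(l_a,l_a)\equiv\tfrac12\,a^\textnormal{T}a$, so Proposition \ref{prp:tightness} indeed applies.

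What remains — and this is the step I expect to require the most care — is upgrading Mosco-Kuwae-Shioya convergence of the forms, together with the weak convergence of the reference measures, to weak convergence of the equilibrium laws $(\mathbb P_N)_N$ on $C([0,\infty),H_0')$. First I would deduce tightness of $(\mathbb P_N)_N$ from Proposition \ref{prp:tightness}, using that a weakly convergent sequence of probability measures on $H$ is tight. Then I would identify every accumulation point through its finite-dimensional distributions \eqref{eqn:finidi}, invoking the Kuwae-Shioya fact that Mosco convergence of the $(\E^N,\dom(\E^N))$ is equivalent to strong convergence of the associated $L^2$-semigroups, and \cite[Theorem 4.5 of Chapter 3]{kurtz86} to reduce the identification of the limit to convergence of finite-dimensional distributions. \textbf{The delicate point} is letting $N\to\infty$ inside the nested product of time-rescaled semigroups $T^N_{N^2t_k}$ on the right-hand side of \eqref{eqn:finidi}: one has to iterate strong Kuwae-Shioya convergence through the finitely many factors, and at each stage check that multiplication by the bounded continuous function $f_l\circ\Lambda_N$ preserves strong convergence — which works because the boundedness of $g_N$ and $g$ forces uniform two-sided bounds on $\varrho_N$ and $\varrho$. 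Finally Corollary \ref{thm:Q1dyn} transports the conclusion from $\Xi=\eins_{[-1,0)^d}$ to every admissible $\Xi$, which completes the plan.
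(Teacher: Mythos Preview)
Your proposal is correct and follows exactly the route the paper intends: the corollary carries no separate proof in the paper but is stated as the assembly of Remark \ref{rem:Argu}, Lemma \ref{lem:tentwelve}(i), Theorem \ref{thm:maiMO}, Proposition \ref{prp:tightness}, and Corollary \ref{thm:Q1dyn}, which is precisely what you have spelled out. Your explicit treatment of the iteration through the nested semigroups in \eqref{eqn:finidi} and the remark that multiplication by bounded functions preserves strong Kuwae-Shioya convergence make explicit the step the paper leaves to Remark \ref{rem:MplusT}(i) and the cited literature.
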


	\section*{Appendix}

\begin{test}\label{lem:fukudec}
	Let $N\in\N$.
	We fix
	 a symmetric, positive, linear operator  $A_N:\R^{k_N}\to\R^{k_N}$,
	a function $g_{N,0}\in C_b^1(\R)$ and real numbers $y_j,\beta_j\in\R$ for $j=1,\dots,m$. 
 With
		\begin{equation*}
			g_N(y):=g_{N,0}(y)+\sum_{j=1}^m\beta_j\eins_{(-\infty,y_j]}(y),\quad y\in\R,
		\end{equation*}
		we define the Borel probability measure
		\begin{gather*}
			\de m_{A_N}(x):=\frac{1}{Z_N}\exp\Big(-x^\textnormal{T}A_Nx-\frac{1}{N^d}\sum_{i=1}^{k_N} g_N\big(N^{\frac{d}{2}-1}x_i\big)\Big)\de x,\\
			Z_N:=\int_{\R^{k_N}}\exp\Big(-x^\textnormal{T}A_Nx-\frac{1}{N^d}\sum_{i=1}^{k_N} g_N\big(N^{\frac{d}{2}-1}x_i\big)\Big)\de x,
		\end{gather*}
		on $\R^{k_N}$.
	Then, the Process ${(X^N_t)}_{t\geq 0}$ defined in Example \ref{exa:moveto} satisfies 
	\begin{equation*}
		\de X^{N,i}_t=-\big(A_N X^{N}_t\big)_i\de t-\frac{1}{2}N^{-\tfrac{d}{2}-1}\,g_{N,0}'\big(N^{\frac{d}{2}-1}X^{N,i}_t\big)\de t
		\\+\sum_{j=1}^m\tfrac{1-e^{-\beta_j/N^d}}{1+e^{-\beta_j/N^d}}\de l_t^{N,i,\tilde y_j}
		+\de B_t^{N,i}
	\end{equation*}
	$P_x^N$-a.s.~for each $x\in\R^{k_N}$,
	where ${(B_t^{N,i})}_{t\geq 0}$, $i=1,\dots, k_N$, are independent Brownian motions
	and $l_t^{N,i,\tilde y_j}$ is the local time of ${(X^{N,i}_t)}_{t\geq 0}$ at $\tilde y_j:=N^{1-\frac{d}{2}}y_j$.
\end{test}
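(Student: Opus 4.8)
The assertion is the Fukushima decomposition, in the strict sense, of the coordinate processes of the diffusion $\mathbf M_N$ associated with the regular, strongly local, conservative Dirichlet form $(\E^N,\dom(\E^N))$; it is the $d$-dimensional analogue of \eqref{eqn:inifini}, whose derivation for $d=1$ is carried out in \cite{bounebache14}. The plan is to apply the decomposition to the coordinate functions $\ell_i(x):=x_i$, $i=1,\dots,k_N$, and then to identify the martingale part and the zero-energy part explicitly. Since $g_N$ is bounded, $\de m_{A_N}/\de x$ is comparable, up to positive multiplicative constants, to the Gaussian density $\de\mu_{A_N}/\de x$; hence $L^2(\R^{k_N},m_{A_N})=L^2(\R^{k_N},\mu_{A_N})$ as sets, $\dom(\E^N)$ is the Gaussian $1,2$-Sobolev space, and each $\ell_i$ lies in $\dom(\E^N)_{\mathrm{loc}}$. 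Using the localising sequence $\ell_i^{(R)}:=(\ell_i\wedge R)\vee(-R)\in\dom(\E^N)\cap L^\infty$, $R\in\N$, together with Remark~\ref{rem:weed} (the exceptional set is empty and $\mathbf M_N$ is non-terminating and continuous) and Corollary~\ref{cor:Mexist}, the decomposition theorem of \cite[Chap.~5]{fukushima11} yields
\[
X^{N,i}_t-X^{N,i}_0=M^{[i]}_t+N^{[i]}_t,\qquad t\geq 0,\ P^N_x\text{-a.s.\ for every }x\in\R^{k_N},
\]
with $(M^{[i]}_t)_{t\geq0}$ a martingale additive functional of finite energy and $(N^{[i]}_t)_{t\geq0}$ a continuous additive functional of zero energy.

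\textbf{Martingale part.} The second step is to compute the energy measures. As $\E^N$ is the standard gradient form carrying the factor $\tfrac12$ and $\grad\ell_i\cdot\grad\ell_k\equiv\delta_{ik}$, the mutual energy measure is $\mu_{\langle\ell_i,\ell_k\rangle}=\delta_{ik}\,m_{A_N}$, whose associated positive continuous additive functional is $t\mapsto\delta_{ik}t$. Hence $\bracket{M^{[i]},M^{[k]}}_t=\delta_{ik}\,t$, and by P.~L\'evy's characterisation $(M^{[1]}_t,\dots,M^{[k_N]}_t)_{t\geq0}$ is a $k_N$-dimensional standard Brownian motion; set $B^{N,i}:=M^{[i]}$.

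\textbf{Zero-energy part.} The decisive step is the identification of $N^{[i]}$. I would split $\log(\de m_{A_N}/\de x)=\Psi_N+\Theta_N$ into the $C^1$ part $\Psi_N(x):=-x^{\mathrm T}A_Nx-\tfrac{1}{N^d}\sum_{k}g_{N,0}(N^{\frac d2-1}x_k)-\log Z_N$ and the piecewise constant part $\Theta_N(x):=-\tfrac{1}{N^d}\sum_{k}\sum_{j}\beta_j\eins_{(-\infty,y_j]}(N^{\frac d2-1}x_k)$. Testing $-\E^N(\ell_i^{(R)},\phi)$ against $\phi\in\dom(\E^N)\cap C_c^\infty(\R^{k_N})$ and integrating by parts in the $x_i$-direction identifies the signed smooth measure $\nu_i$ attached to $\ell_i$ (in the sense of \cite[§5.4--5.5]{fukushima11}): its absolutely continuous part equals $\tfrac12\,\partial_i\Psi_N\cdot m_{A_N}$, which through the Revuz correspondence contributes to $N^{[i]}$ exactly
\[
\int_0^t\!\Big(-(A_NX^{N}_s)_i-\tfrac12 N^{-\frac d2-1}g_{N,0}'\big(N^{\frac d2-1}X^{N,i}_s\big)\Big)\de s .
\]
The jump of the density across each hyperplane $\{x_i=\tilde y_j\}$, namely $(\de m_{A_N}/\de x)(\tilde y_j{+})=e^{\beta_j/N^d}\,(\de m_{A_N}/\de x)(\tilde y_j{-})$, produces the remaining singular part of $\nu_i$, supported on $\bigcup_j\{x_i=\tilde y_j\}$ and proportional to the $(k_N-1)$-dimensional surface measure there. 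By the Revuz correspondence this part of $N^{[i]}$ is a local time functional of $X^{N,i}$ at the points $\tilde y_j$; to pin down the constant I would localise in the $i$-th coordinate — conditioning on $(x_k)_{k\neq i}$ reduces the membrane at $\tilde y_j$ to a one-dimensional distorted skew Brownian motion with scale-jump ratio $e^{\beta_j/N^d}$ — and invoke the classical skew-Brownian-motion decomposition (Harrison--Shepp type; cf.\ the $d=1$ treatment in \cite{bounebache14}), which gives precisely the coefficient $\tfrac{1-e^{-\beta_j/N^d}}{1+e^{-\beta_j/N^d}}$ in front of the symmetric semimartingale local time $l^{N,i,\tilde y_j}$ of $X^{N,i}$ at $\tilde y_j$. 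Adding the two contributions and invoking uniqueness of the Fukushima decomposition yields the claimed identity; the $P^N_x$-a.s.\ statement for every $x$ follows from Remark~\ref{rem:weed}.

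\textbf{Main obstacle.} The comparability of the two reference measures, the localisation, and the Brownian identification through energy measures are routine. The delicate point is the singular part of $N^{[i]}$: isolating the correct object — the \emph{symmetric} semimartingale local time, whose Revuz measure involves the average $\tfrac12\big(\tfrac{\de m_{A_N}}{\de x}(\tilde y_j{+})+\tfrac{\de m_{A_N}}{\de x}(\tilde y_j{-})\big)$ of the two one-sided density limits — and hence producing the exact skew coefficient $\tfrac{1-e^{-\beta_j/N^d}}{1+e^{-\beta_j/N^d}}$, where the numerator reflects the density jump and the denominator the said average. Keeping the normalisation of the local time and the factor $e^{\beta_j/N^d}$ consistent throughout the one-dimensional reduction is where the care is required.
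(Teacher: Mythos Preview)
Your approach is essentially the paper's: Fukushima decomposition applied to the coordinate functions, identification of the martingale part via the Carr\'e-du-Champs (hence L\'evy's characterisation gives the Brownian motions), and identification of the zero-energy part through an integration-by-parts formula that exhibits the absolutely continuous drift together with the surface measures on the hyperplanes $\{x_i=\tilde y_j\}$. The paper writes the IBP formula directly (with the coefficient $\tfrac{1-e^{-\beta_j/N^d}}{1+e^{-\beta_j/N^d}}$ already visible there) rather than going through a one-dimensional reduction, but your conditioning argument reaches the same place.

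There is one genuine gap. Your final sentence, ``the $P^N_x$-a.s.\ statement for every $x$ follows from Remark~\ref{rem:weed}'', is not justified by that remark. Remark~\ref{rem:weed} only ensures that the process is non-terminating and continuous from \emph{every} starting point by redefining it on a capacity-zero set of traps; it says nothing about the exceptional set in the Fukushima decomposition being empty. The decomposition of \cite[Chap.~5]{fukushima11} is, a priori, only valid $P^N_x$-a.s.\ for \emph{quasi-every} $x$, and upgrading to every $x$ requires the absolute continuity condition on the transition semigroup (so that capacity-zero sets are not seen from any starting point). The paper closes this by observing that the embedding $\dom(\E^N)\hookrightarrow L^2(\R^{k_N},m_{A_N})$ is compact, which yields the absolute continuity criterion for the semigroup; you need to supply this (or an equivalent) argument rather than invoking Remark~\ref{rem:weed}.
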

\begin{proof} 
	The proof is analogous to \cite[Thm.~5.2]{bounebache14}. Our situation only is different from the cited theorem concerning scaling constants, the dimensions $k_N$, $d\in\N$, and the choice of $A_N$, which in both situations represents a positive, symmetric operator. These amendments do not touch the validity of the arguments from the proof of \cite[Thm.~5.2]{bounebache14}. Therefore, the statement of the Lemma follows directly. For the reader's convenience, we sketch the major steps of the proof.
	
	Step (1): For $y\in\R$ and $i\in\{1,\dots,k_N\}$ we define a finite Borel measure
	\begin{equation*}
		\tilde\delta_y^i(B):=\lim_{\varepsilon\to 0}\,\frac{1}{2\varepsilon }m_{A_N}\big(B\cap\big\{x\in\R^{k_N}\,\big|\,|x_i-y|\leq\varepsilon\big\}\big),\quad B\in\mathcal B(\R^{k_N}).
	\end{equation*}
    Then, analogously to \cite[Equation (5.6)]{bounebache14}, we obtain the following integration	by parts formula for $m_{A_N}$. For $i=1,\dots, k_N$ and $f\in C^1(\R^{k_N})$ with finite Lipschitz constant, it holds
	\begin{align}\label{eqn:ibpf}
		\int_{\R^{k_N}}\frac{\partial f}{\partial x_i}\de m_{A_N}=&-
		2\int_{\R^{k_N}}f(x) (A_Nx)_i\de m_{A_N}(x)\nonumber\\
		&+N^{-\frac{d}{2}-1}\int_{\R^{k_N}}f(x)
		g_{N,0}'\big(N^{\frac{d}{2}-1}x_i\big)\de m_{A_N}(x)\nonumber\\
		&+2\sum_{j=1}^m\frac{1-e^{-\beta_j/N^d}}{1+e^{-\beta_j/N^d}}
		\int_{\R^{k_N}}f(x)\de \tilde \delta_{N^{1-\frac{d}{2}}y_j}^i(x).
	\end{align}
	
	Step (2): Analogously as in the proof of \cite[Thm.~5.2]{bounebache14}, the positive, continuous additive functional  of ${(X^N_t)}_{t\geq 0}$, which satisfies the Revuz correspondence w.r.t.~$\tilde\delta_y^i$ for given $i=1,\dots,k_N$ and $y\in\R$ coincides with the local time ${(l_t^{N,i,y})}_{t\geq 0}$ of the component process ${(X^{N,i}_t)}_{t\geq 0}$ at $y$.
	
	Step (3): Let $(\E^N\dom(\E^N))$ be the Dirichlet form of Example \ref{exa:moveto} and   $u_i:\R^{k_N}\to\R$ denote the $i$-th coordinate projection for $i\in\{1,\dots,k_N\}$. Then, 
	$$\E^N(u_i,f)=\frac{1}{2}\int_{\R^{k_N}}\partial_i f\de m_{A_N}$$ for $f$ as in step (1). Using \eqref{eqn:ibpf} and (2), the Fukushima decomposition yields
	\begin{equation}\label{eqn:fuku}
		X_t^{N,i}-X_0^{N,i}=M_t^{u_i}+N_t^{u_i},\quad t\geq 0,\quad i=1,\dots,k_N,
	\end{equation}
	$P_x^N$-a.s.~for quasi-every $x\in\R^{k_N}$, where
	\begin{equation*}
		N_t^{u_i}=-\big(A_N X^{N}_t\big)_i\de t-\frac{1}{2}N^{-\tfrac{d}{2}-1}\,g_{N,0}'\big(N^{\frac{d}{2}-1}X^{N,i}_t\big)\de t
		\\+\sum_{j=1}^m\tfrac{1-e^{-\beta_j/N^d}}{1+e^{-\beta_j/N^d}}\de l_t^{N,i,\tilde y_j},
	\end{equation*} 
	$\tilde y_j:= N^{1-\frac{d}{2}}y_j$, and ${(M_t^{u_i})}_{t\geq 0}$ is the martingale additive functional with 
	\begin{equation}
	{\bracket {M^{u_i},M^{u_j}}_t}
	=2\int_0^t\Gamma_{N}(u_i,u_j)\big(X_s\big)\de s=t\delta_{ij},\quad t\geq 0,\quad i,j\in\{1,\dots,k_N\}.
\end{equation}
	
	 Step (4): Via compactness of the embedding $\dom(\E)\hookrightarrow L^2(\R^{k_N},m_{A_N})$ the absolute continuity criterion for the corresponding semigroup can be shown. Therefore,
	 we have the stronger statement that \eqref{eqn:fuku} indeed holds $P_x^N$-a.s.~for every $x\in\R^{k_N}$
\end{proof}

\begin{test}\label{lem:tightOne:majmin}
	Let ${(g_N)}_{N\in\N}$ either be a sequence of bounded, monotone decreasing functions, or a sequence of bounded, monotone increasing functions on $\R$ such that there exists a bounded function $g:\R\to\R$ with
	\begin{equation*}
		\sup_{x\in\R}\inf_{\substack{r\\|r|\leq\delta}}|g(x)-g_N(x+r)|\overset{N\to\infty}{\longrightarrow}0\quad\text{for every }\delta\in(0,\infty).
	\end{equation*}
	Let $m\in\N$. There exists a minorante $g_{N,m}^{\text{min}}\in C_b(\R)$ and a majorante $g_{N,m}^{\text{maj}}\in C_b(\R)$ for $N\in\N$ with
	\begin{equation*}
		-\|g_{N}\|_\infty\leq g_{N,m}^{\text{min}}(x)\leq g_N(x)\leq g_{N,m}^{\text{maj}}\leq\|g_N\|_\infty,\quad x\in\R,
	\end{equation*}
	which meet the following properties:
	
	There are asymptotic functions $g_m^{\text{min}}$, $g_m^{\text{maj}}\in C_b(\R)$ such that
	\begin{gather*}
		\lim_{N\to\infty}\|g_{N,m}^{\text{min}}-g_m^{\text{min}}\|_\infty=0,\quad \lim_{N\to\infty}\|g_{N,m}^{\text{maj}}-g_m^{\text{maj}}\|_\infty=0.
	\end{gather*}
	Moreover,
	\begin{gather*}
		\lim_{m\to\infty}g_m^{\text{min}}(x)=g(x),\quad \lim_{m\to\infty}g_m^{\text{maj}}(x)=g(x),\quad \text{if }g\text{ is continuous at the point }x\in\R.
	\end{gather*}
\end{test}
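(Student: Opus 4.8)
The plan is to first recast the ``graph-type'' hypothesis as genuine pointwise bounds on $g_N$, then to build the minorantes and majorantes as one-sided moving averages of $g_N$, and finally to read off the two required limits from a single elementary estimate. Write $\omega_N(\delta):=\sup_{x\in\R}\inf_{|r|\le\delta}|g(x)-g_N(x+r)|$, so that the assumption is precisely $\omega_N(\delta)\to0$ as $N\to\infty$ for every $\delta>0$. I treat the case where the $g_N$ are monotone decreasing; the increasing case is the mirror image throughout. For $|r|\le\delta$ one has $g_N(x+\delta)\le g_N(x+r)\le g_N(x-\delta)$, so $g(x)$ lies within distance $\omega_N(\delta)$ of the interval $[\,g_N(x+\delta),g_N(x-\delta)\,]$; this gives the two one-sided bounds $g_N(x-\delta)\ge g(x)-\omega_N(\delta)$ and $g_N(x+\delta)\le g(x)+\omega_N(\delta)$, valid for all $x\in\R$ and $\delta>0$. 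Running the same comparison between two fixed points also shows that $g$ is itself monotone decreasing, hence Borel measurable with at most countably many discontinuities.

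Next I would set, for $N,m\in\N$,
\[
	g_{N,m}^{\text{min}}(x):=m\int_x^{x+1/m}g_N(y)\de y,\qquad
	g_{N,m}^{\text{maj}}(x):=m\int_{x-1/m}^{x}g_N(y)\de y ,
\]
exchanging the two integration windows in the increasing case. Each of these is Lipschitz (with constant $2m\|g_N\|_\infty$), hence lies in $C_b(\R)$; being an average of values of $g_N$ it takes values in $[-\|g_N\|_\infty,\|g_N\|_\infty]$; and because every value of the decreasing function $g_N$ on $[x,x+1/m]$ is $\le g_N(x)$, while every value on $[x-1/m,x]$ is $\ge g_N(x)$, one obtains $g_{N,m}^{\text{min}}\le g_N\le g_{N,m}^{\text{maj}}$ pointwise, so the required chain of inequalities holds. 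The natural asymptotic functions are the same averages of the limit $g$, namely $g_m^{\text{min}}(x):=m\int_x^{x+1/m}g(y)\de y$ and $g_m^{\text{maj}}(x):=m\int_{x-1/m}^{x}g(y)\de y$ (windows again swapped in the increasing case), which are likewise Lipschitz and bounded by $\|g\|_\infty$. I use moving averages rather than, say, piecewise-linear interpolation on a fixed grid, because the discontinuities of $g$ may accumulate and the latter construction would then control $g_{N,m}^{\text{min}}-g_m^{\text{min}}$ only locally uniformly, not uniformly on $\R$.

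For the limit $N\to\infty$ with $m$ fixed, I would write $g_{N,m}^{\text{min}}(x)-g_m^{\text{min}}(x)=m\int_0^{1/m}\big(g_N(x+t)-g(x+t)\big)\de t$ and estimate the integrand: combining the two one-sided bounds above with monotonicity of $g$ gives $|g_N(x+t)-g(x+t)|\le\big(g(x+t-\delta)-g(x+t+\delta)\big)+\omega_N(\delta)$ for every $\delta\in(0,\tfrac{1}{2m})$, and integrating this telescoping bound over the window shows $\|g_{N,m}^{\text{min}}-g_m^{\text{min}}\|_\infty\le 4m\delta\|g\|_\infty+\omega_N(\delta)$, uniformly in $x$; choosing $\delta$ small and then $N$ large drives this to $0$, and the majorante is handled identically. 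For the limit $m\to\infty$, if $g$ is continuous at $x$ then $|g_m^{\text{min}}(x)-g(x)|\le m\int_x^{x+1/m}|g(y)-g(x)|\de y\le\sup_{|y-x|\le 1/m}|g(y)-g(x)|\to0$, and likewise for $g_m^{\text{maj}}$. I expect the only genuinely delicate point to be the first step --- extracting the two-sided pointwise control of $g_N$ from the $\inf_{|r|\le\delta}$-formulation of the hypothesis, which is exactly where monotonicity of $g_N$ is indispensable; after that, everything is routine bookkeeping.
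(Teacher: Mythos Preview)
Your proof is correct and takes a genuinely different route from the paper's.

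The paper builds $g_{N,m}^{\text{min}}$ and $g_{N,m}^{\text{maj}}$ by a partition-of-unity interpolation on the grid $\frac{1}{m}\Z$: it chooses near-minimizers $r(N,m,i)$ with $|g(\tfrac{i}{m})-g_N(\tfrac{i}{m}+r(N,m,i))|$ small, sets $g_{N,m}^{\text{min}}=\sum_i g_N(\tfrac{i}{m}+r(N,m,i))\,\tau_{m,i+2}$ (the index shift $i\mapsto i\pm2$ together with monotonicity of $g_N$ forces the minorante/majorante inequality), and takes $g_m^{\text{min}}=\sum_i g(\tfrac{i}{m})\,\tau_{m,i+2}$ as the asymptotic function. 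Uniform convergence then drops out of the defining property of $r(N,m,i)$ with no further estimate on $g_N-g$.

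Your approach is more analytic: you first convert the graph-type hypothesis into the two-sided pointwise bound $|g_N(y)-g(y)|\le\big(g(y-\delta)-g(y+\delta)\big)+\omega_N(\delta)$ (this step, where monotonicity enters, is the real content and you handle it correctly), and only then smooth by one-sided moving averages. The telescoping bound $\|g_{N,m}^{\text{min}}-g_m^{\text{min}}\|_\infty\le 4m\delta\|g\|_\infty+\omega_N(\delta)$ is clean and avoids all the index bookkeeping of the paper. What the paper gains is that it never needs the intermediate pointwise estimate on $g_N-g$; what you gain is a construction that is manifestly Lipschitz and whose uniform-in-$x$ convergence is transparent.

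One minor remark: your aside that piecewise-linear interpolation on a fixed grid ``would then control $g_{N,m}^{\text{min}}-g_m^{\text{min}}$ only locally uniformly'' is not accurate --- the paper's grid construction does achieve global uniform convergence, precisely because the near-minimizers $r(N,m,i)$ are controlled by $\omega_N(1/m)$ uniformly in $i$. This does not affect your argument, which stands on its own.
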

\begin{proof}
	We will construct the desired minorante and majorante for $g_N$ via a mollifying technique with uses a partition of unity. W.l.o.g.~let's assume we are in the monotone increasing case. Let $m\in\N$. Let $\tau_{m,i}$, $i\in\Z$, be a partition of union, subordinate to $(\frac{i-1}{m},\frac{i+1}{m})$, $i\in\Z$. Choose $-\frac{1}{m}<r(N,m,i)<\frac{1}{m}$ such that
	\begin{equation}\label{eqn:tightOne:11}
		\big|g(\frac{i}{m})-g_N\big(\frac{i}{m}+r(N,m,i)\big)\big|\leq\frac{1}{2N}+\inf_{\substack{r\\|r|\leq1/m}}\big|g\big(\frac{i}{m}\big)-g_N\big(\frac{i}{m}+r\big)\big|,\quad i,N\in\N.
	\end{equation}
	We define
	\begin{align*}
		g_{N,m}^{\text{min}}:=\sum_{i\in\Z}g_N\big(\frac{i}{m}+r(N,m,i)\big)\tau_{m,i+2},\quad g_{N,m}^{\text{maj}}:=\sum_{i\in\Z}g_N\big(\frac{i}{m}+r(N,m,i)\big)\tau_{m,i-2}
	\end{align*}
	for $N\in\N$ and 
	\begin{align*}
		g_{m}^{\text{min}}:=\sum_{i\in\Z}g\big(\frac{i}{m}\big)\tau_{m,i+2},\quad g_{m}^{\text{maj}}:=\sum_{i\in\Z}g\big(\frac{i}{m}\big)\tau_{m,i-2}.
	\end{align*}
	We remark that $\tau_{m,i}(x)\neq 0$ necessitates $i\in\{\lfloor mx\rfloor,\lfloor mx\rfloor+1\}$ for $i\in\Z$, $x\in\R$, and in particular, $\tau_{m,\lfloor mx\rfloor}(x)+\tau_{m,\lfloor mx\rfloor+1}(x)=1$. The functions defined above have the desired properties indeed. 
	It holds
	\begin{align*}
		g_{N,m}^{\text{min}}(x)&=g_N\Big(\frac{\lfloor mx\rfloor-2}{m}+r(N,m,\lfloor mx\rfloor-2)\Big)\tau_{m,\lfloor mx\rfloor}(x)\\&\phantom{=}+g_N\Big(\frac{\lfloor mx\rfloor-1}{m}+r(N,m,\lfloor mx\rfloor-1)\Big)\tau_{m,\lfloor mx\rfloor+1}(x)\\
		&\leq g_N\Big(\frac{\lfloor mx\rfloor-1}{m}\Big)\tau_{m,\lfloor mx\rfloor}(x)+g_N\Big(\frac{\lfloor mx\rfloor}{m}\Big)\tau_{m,\lfloor mx\rfloor+1}(x)\\
		&\leq g_N(x)(\tau_{m,\lfloor mx\rfloor}(x)+\tau_{m,\lfloor mx\rfloor+1}(x))= g_N(x),\quad x\in\R,N\in\N,
	\end{align*}
	and also
	\begin{align*}
		g_{N,m}^{\text{maj}}(x)&=g_N\Big(\frac{\lfloor mx\rfloor+2}{m}+r(N,m,\lfloor mx\rfloor+2)\Big)\tau_{m,\lfloor mx\rfloor}(x)\\&\phantom{=}+g_N\Big(\frac{\lfloor mx\rfloor+3}{m}+r(N,m,\lfloor mx\rfloor+3)\Big)\tau_{m,\lfloor mx\rfloor+1}(x)\\
		&\geq g_N\Big(\frac{\lfloor mx\rfloor+1}{m}\Big)\tau_{m,\lfloor mx\rfloor}(x)+g_N\Big(\frac{\lfloor mx\rfloor+2}{m}\Big)\tau_{m,\lfloor mx\rfloor+1}(x)\\
		&\geq g_N(x)(\tau_{m,\lfloor mx\rfloor}(x)+\tau_{m,\lfloor mx\rfloor+1}(x))= g_N(x),\quad x\in\R,N\in\N.
	\end{align*}
	Let $\varepsilon>0$. We choose $N_0$ is large enough such that $\inf_{r:|r|\leq 1/m}|g(x)-g_N(x+r)|<\frac{\varepsilon}{2}$ for every $x\in\R$ and $N\geq N_0$. For $N\geq \max\{N_0,\frac{1}{\varepsilon}\}$, using \eqref{eqn:tightOne:11}, it holds
	\begin{align*}
		|g_{N,m}^{\text{min}}(x)-g_{m}^{\text{min}}(x)|&\leq \Big|g_N\Big(\frac{\lfloor mx\rfloor-2}{m}+r(N,m,\lfloor mx\rfloor-2)\Big)-g\Big(\frac{\lfloor mx\rfloor-2}{m}\Big)\Big|\tau_{m,\lfloor mx\rfloor}(x)\\&\phantom{=}+\Big|g_N\Big(\frac{\lfloor mx\rfloor-1}{m}+r(N,m,\lfloor mx\rfloor-1)\Big)-g\Big(\frac{\lfloor mx\rfloor-1}{m}\Big)\Big|\tau_{m,\lfloor mx\rfloor+1}(x)\\
		&\leq\big(\frac{1}{2N}+\frac{\varepsilon}{2}\big)\tau_{m,\lfloor mx\rfloor}(x)+\big(\frac{1}{2N}+\frac{\varepsilon}{2}\big)\tau_{m,\lfloor mx\rfloor+1}(x)\\
		&\leq\varepsilon \tau_{m,\lfloor mx\rfloor}(x)+\varepsilon\tau_{m,\lfloor mx\rfloor+1}(x)=\varepsilon.
	\end{align*}
	Analogously, we get $|g_{N,m}^{\text{maj}}(x)-g_{m}^{\text{maj}}(x)|\leq\varepsilon$ for $x\in\R$ and $N\geq \max\{N_0,\frac{1}{\varepsilon}\}$.
	
	Moreover, let $g$ be continuous at $x\in\R$ and $m\in\N$ be large enough such that $|g(x)-g(y)|\leq\varepsilon $ if $y\in\R$, $|y-x|<\frac{3}{m}$, then
	\begin{align*}
		|g_{N}^{\text{min}}(x)-g(x)|&\leq \Big|g\Big(\frac{\lfloor mx\rfloor-2}{m}\Big)-g(x)\Big|\tau_{m,\lfloor mx\rfloor}(x)+\Big|g\Big(\frac{\lfloor mx\rfloor-1}{m}\Big)-g(x)\Big|\tau_{m,\lfloor mx\rfloor+1}(x)\\
		&\leq \varepsilon\tau_{m,\lfloor mx\rfloor}(x)+\varepsilon\tau_{m,\lfloor mx\rfloor+1}(x)=\varepsilon
	\end{align*}
	and also
	\begin{align*}
		|g_{N}^{\text{maj}}(x)-g(x)|&\leq \Big|g\Big(\frac{\lfloor mx\rfloor+2}{m}\Big)-g(x)\Big|\tau_{m,\lfloor mx\rfloor}(x)+\Big|g\Big(\frac{\lfloor mx\rfloor+3}{m}\Big)-g(x)\Big|\tau_{m,\lfloor mx\rfloor+1}(x)\\
		&\leq \varepsilon\tau_{m,\lfloor mx\rfloor}(x)+\varepsilon\tau_{m,\lfloor mx\rfloor+1}(x)=\varepsilon.
	\end{align*}

    In the case, where $g_N$, $N\in\N$, are monotone decreasing, we define
    \begin{align*}
    	g_{N,m}^{\text{min}}:=\sum_{i\in\Z}g_N\big(\frac{i}{m}+r(N,m,i)\big)\tau_{m,i-2},\quad g_{N,m}^{\text{maj}}:=\sum_{i\in\Z}g_N\big(\frac{i}{m}+r(N,m,i)\big)\tau_{m,i+2}
    \end{align*}
    for $N\in\N$ and 
    \begin{align*}
    	g_{m}^{\text{min}}:=\sum_{i\in\Z}g\big(\frac{i}{m}\big)\tau_{m,i-2},\quad g_{m}^{\text{maj}}:=\sum_{i\in\Z}g\big(\frac{i}{m}\big)\tau_{m,i+2},
    \end{align*}
    instead. The values of $r(N,m,i)$ for $N,m,i\in\N$ are chosen exactly as in the first case. From here, the argumentation is analogous. This concludes the proof.
\end{proof}

\begin{test}\label{lem:LVset}
	Let $\mu$ be a non-degenerate centred Gaussian measure on $H:=L^2(D,\de z)$ for a domain $D\subseteq\R^d$.
	For each value $a\in\R$ the level set $\{z\in D$ $|$ $h(z)=a\}$ has Lebesgue measure zero for $\mu$-a.e.~$h\in H$.
\end{test}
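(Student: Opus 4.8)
The plan is to prove the statement by Fubini's theorem: I will show that
\[
  \int_H \mathrm{Leb}\bigl(\{z\in D\,:\,h(z)=a\}\bigr)\,\de\mu(h)=0,
\]
which forces the integrand to vanish for $\mu$-a.e.\ $h$. The one genuine technical point is that elements of $H=L^2(D,\de z)$ are equivalence classes, so ``$h(z)$'' is not literally defined pointwise; before applying Fubini I must fix a jointly measurable version of the evaluation map. I would do this via Lebesgue's differentiation theorem: after extending each $h$ by zero to all of $\R^d$, set
\[
  \Phi(h,z):=\limsup_{n\to\infty}\frac{1}{|B_{1/n}(z)|}\int_{B_{1/n}(z)}h(y)\,\de y,
\]
with the convention $\Phi(h,z):=0$ when the $\limsup$ is infinite. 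For each fixed $n$ the map $(h,z)\mapsto\langle h,\eins_{B_{1/n}(z)}\rangle$ is jointly continuous on $H\times\R^d$ (the ball depends continuously on $z$ in $L^2$-norm, and $h\mapsto\langle h,g\rangle$ is continuous), so $\Phi$ is jointly Borel measurable; and by Lebesgue's theorem, for \emph{every} $h$ one has $\Phi(h,\cdot)=h$ $\de z$-a.e. In particular $\mathrm{Leb}(\{z\in D:h(z)=a\})=\int_D\eins_{\{a\}}\bigl(\Phi(h,z)\bigr)\,\de z$ is an honest measurable function of $h$, and Tonelli gives
\[
  \int_H \mathrm{Leb}\bigl(\{z\in D:h(z)=a\}\bigr)\,\de\mu(h)
  =\int_D \mu\bigl(\{h\in H:\Phi(h,z)=a\}\bigr)\,\de z.
\]

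It then suffices to show that for $\de z$-a.e.\ $z\in D$ the image of $\mu$ under $h\mapsto\Phi(h,z)$ is a \emph{non-degenerate} centred Gaussian measure on $\R$; since such a measure is atomless, the inner integrand above vanishes for a.e.\ $z$ and the whole integral is $0$. For $\de z$-a.e.\ $z$ the random variables $h\mapsto\langle h,\eins_{B_{1/n}(z)}\rangle/|B_{1/n}(z)|$, $n\in\N$, converge $\mu$-a.s.\ to $\Phi(\cdot,z)$ (another application of Tonelli to the a.e.\ statement in Lebesgue's theorem); being an a.s.\ limit of centred Gaussian variables, $\Phi(\cdot,z)$ is itself centred Gaussian, and the convergence also holds in $L^2(\mu)$. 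So I only need to exclude degeneracy.

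Suppose the set $E:=\{z\in D:\Phi(\cdot,z)=0\ \mu\text{-a.s.}\}$ had positive Lebesgue measure. Using $\Phi(h,\cdot)=h$ $\de z$-a.e.\ and Tonelli,
\[
  \int_H\int_E h(z)^2\,\de z\,\de\mu(h)
  =\int_E\int_H \Phi(h,z)^2\,\de\mu(h)\,\de z=0,
\]
so $h|_E=0$ for $\mu$-a.e.\ $h$; hence $\mu$ would be concentrated on the proper closed subspace $\{h\in H:h|_E=0\}$, contradicting non-degeneracy (picking $0\ne v\perp\{h:h|_E=0\}$, i.e.\ $v$ supported in $E$, gives $\langle Qv,v\rangle=\int_H\langle v,h\rangle^2\,\de\mu(h)=0$ and hence $v=0$, where $Q$ is the covariance operator of $\mu$). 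Therefore $|E|=0$, and for every $z\notin E$ the variable $\Phi(\cdot,z)$ is a centred Gaussian with positive variance, hence atomless, so $\mu(\{h:\Phi(h,z)=a\})=0$. Combining this with the Fubini identity above yields $\int_H\mathrm{Leb}(\{z\in D:h(z)=a\})\,\de\mu(h)=0$, which is the claim.

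The only real obstacle is the measure-theoretic bookkeeping around the equivalence-class nature of $H$ — constructing the jointly measurable representative $\Phi$ and checking that the Fubini/Tonelli swaps are licit. The probabilistic heart of the argument (a non-degenerate one-dimensional Gaussian has no atoms, and degeneracy of the one-dimensional marginals $h\mapsto\Phi(h,z)$ on a positive-measure set of $z$ would force $\mu$ onto a proper closed subspace) is then immediate.
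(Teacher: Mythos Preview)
Your proof is correct and takes a genuinely different route from the paper's. Both arguments reduce, via Fubini/Tonelli, to showing that for almost every $z\in D$ the one-dimensional law of $h\mapsto h(z)$ under $\mu$ has no atom at $a$. The paper does this by quasi-invariance: it picks Cameron--Martin directions $\varphi$, disintegrates $\mu$ along the lines $s\mapsto h+s\varphi$, and observes that for fixed $z$ with $\tilde\varphi(z)\neq 0$ the equation $h(z)+s\tilde\varphi(z)=a$ has a single solution in $s$, hence is Lebesgue-null; covering $D$ (up to a null set) by the supports of an orthonormal basis of Cameron--Martin elements finishes the argument. You instead build a jointly measurable evaluation $\Phi(h,z)$ via Lebesgue differentiation, note that for a.e.\ $z$ it is a $\mu$-a.s.\ limit of continuous linear functionals and hence centred Gaussian, and rule out degeneracy on a positive-measure set $E$ by observing that this would force $\mu$ onto the proper closed subspace $\{h:h|_E=0\}$, contradicting injectivity of the covariance. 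Your approach is more self-contained---it needs no Cameron--Martin theory, only the elementary facts that a.s.\ limits of Gaussians are Gaussian and that non-degenerate one-dimensional Gaussians are atomless---and it is actually more careful than the paper about the joint-measurability bookkeeping needed to justify the Fubini step, which the paper leaves implicit. The paper's approach, on the other hand, sidesteps the Lebesgue-differentiation construction and gets the atomlessness essentially for free from translation invariance of Lebesgue measure on $\R$.
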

\begin{proof}
	The claim is that the $\de z$-class defined by the composition $\eins_{\{a\}}\circ h$
	vanishes in $\de z$-a.e.~sense for $\mu$-a.e.~$h\in H$. This is shown as follows.
	
	Let $\varphi\in H$ such that the image measure of $\mu$ under any shift $\tau_{s\varphi}h:=h+s\varphi$, $h\in H$, $s\in\R$, is absolutely continuous w.r.t.~$\mu$ itself, i.e.~$\mu\circ\tau_{s\varphi}^{-1}\ll\mu$. In that case, we immediately have $\mu\ll\mu\circ\tau_{s\varphi}^{-1}$, hence the equivalence (mutual absolute continuity) $\mu\sim\mu\circ\tau_{s\varphi}^{-1}$, for $s\in\R$. Moreover, defining the measure
	\begin{equation*}
		\sigma_\varphi(B):=\int_\R\mu\circ\tau_{s\varphi}^{-1}(B)\de s,\quad B\subseteq H\text{ Borel measurable},
	\end{equation*}
	it holds $\sigma_\varphi\sim\mu$. Moreover,
	\begin{equation*}
		\int_H u\de\mu=\int_H\int_{\R} u(h+s\varphi)\frac{\de\mu}{\de\sigma_\varphi}(h+s\varphi)\de s\de\mu(h)
	\end{equation*}
	for any Borel measurable function $u:H\to[0,\infty)$.
	Now, let 
	$\tilde\varphi:D\to\R$ be a representative for $\varphi$ and $A\subseteq D$ be a Borel measurable set such that $\tilde\varphi(z)\neq 0$ for $z\in A$. Since 
	any singleton is negligible w.r.t.~the Lebesgue measure, Fubini's theorem yields
	\begin{multline}\label{eqn:tightOne:19}
		\int_H\int_A\eins_{a}(h(z))\de z\de\mu(h)=\int_H\int_{\R}\int_A\eins_{a}\big(h(z)+s\tilde \varphi(z)\big)\de z\frac{\de\mu}{\de\sigma_\varphi}(h+s\varphi) \de s\de\mu(h)\\
		=\int_H\int_A\int_\R\eins_{\tilde \varphi(z)^{-1}(a-h(z))}(s)\frac{\de\mu}{\de\sigma_\varphi}(h+s\varphi)\de s\de z\de\mu(h)=0.
	\end{multline}
	Due to the Cameron-Martin formula, the space $V:=\{\varphi\in H$ $|$ $\mu\circ\tau_{s\varphi}^{-1}\ll\mu$ for $s\in\R\}$ is dense in $H$, as is shown in 
	\cite[Theorems 3.1.2 \& 3.1.3  of Chapter II]{bouleau91}. Hence, we can find an orthonormal basis $\varphi_1,\varphi_2,\dots$ of elements from $V$ and define Borel measurable subsets $A_1,A_2,\dots$ of $D$ as $A_i:=\{z\in D\,|\,\tilde\varphi_i(z)\neq 0\}$ for some $\de z$-version $\tilde\varphi_i$ of $\varphi_i$. The right-hand-side of \eqref{eqn:tightOne:19} is independent of $\varphi$ and in this case yields
	\begin{equation*}
		\int_H\int_{A_i}\eins_{a}(h(z))\de z\de\mu(h)=0,\quad i\in\N.
	\end{equation*}
	Let $Q:=D\setminus \big(\bigcup_{i\in\N}A_i\big)$. Since $\langle\eins_Q,\varphi_i\rangle=0$ for every $i\in\N$, the set $Q$ has Lebesgue measure zero. Therefore,
	\begin{align*}
		\int_H\int_{D}\eins_{a}(h(z))\de z\de\mu(h)&=\int_H\int_{\bigcup_{i\in\N}A_i}\eins_{a}(h(z))\de z\de\mu(h)
		\\&\leq\sum_{i=1}^\infty\int_H\int_{A_i}\eins_{a}(h(z))\de z\de\mu(h)=0.
	\end{align*}
	This concludes the proof.
\end{proof}
	\printbibliography

\end{document}